\numberwithin{equation}{section}
\newtheorem{theorem}{Theorem}[section]
\newtheorem{lemma}[theorem]{Lemma}
\newtheorem{proposition}[theorem]{Proposition}
\theoremstyle{definition}
\newtheorem{remark}[theorem]{Remark}
\renewcommand{\epsilon}{\varepsilon}
\newcommand{\B}{\mathcal{B}}
\newcommand{\C}{\mathbb{C}}
\newcommand{\E}{\mathbb{E}}
\newcommand{\M}{\mathcal{M}}
\newcommand{\N}{\mathcal{N}}
\newcommand{\R}{R}
\newcommand{\uX}{\mathbf{x}}
\newcommand{\uY}{\mathbf{y}}
\newcommand{\uZ}{\mathbf{z}}
\newcommand{\ux}{\mathbf{x}}
\newcommand{\diff}{\mathrm{d}}
\newcommand{\id}{\operatorname{id}}
\newcommand{\tr}{\operatorname{tr}}
\newcommand{\Tr}{\operatorname{Tr}}
\renewcommand{\Im}{\operatorname{Im}}
\def\moverlay{\mathpalette\mov@rlay}
\def\mov@rlay#1#2{\leavevmode\vtop{%
\baselineskip\z@skip \lineskiplimit-\maxdimen
\ialign{\hfil$#1##$\hfil\cr#2\crcr}}}
\def\@settitle{\begin{center}%
  \baselineskip14\p@\relax
    \normalfont \Large \uppercase{ \textbf{\@title}}
  \end{center}%
 }
\title[Operator-Valued Matrices]{{Operator-Valued Matrices with Free \\ or Exchangeable Entries}}
\small\author[M. Banna]{Marwa Banna}}
\address{New York University Abu Dhabi, Division of Science, Mathematics, Abu Dhabi, UAE}
\email{marwa.banna@nyu.edu}
\author[G. C\'ebron]{Guillaume C\'ebron}
\address{Universit\'e Paul Sabatier, Laboratoire de Statistique et Probabilit\'es, 31062 Toulouse, France}
\email{Guillaume.Cebron@math.univ-toulouse.fr}
\date{\today}
\thanks{MB was partially supported by the ERC Advanced Grant NCDFP 339760 held by Roland Speicher. GC is supported by the Project
MESA (ANR-18-CE40-006) and by the Project STARS (ANR-20-CE40-0008) of the French National Research Agency (ANR). The authors would  like to thank the LabEx CIMI for covering some traveling expenses to work on this paper. The authors would also like to thank Tobias Mai for fruitful discussions on the Lindeberg method  and the referee for the comments/suggestions that helped improve the paper.}
\keywords{random block matrices, noncommutative Lindeberg method,  matrices with noncommutative exchangeable entries,  matrices with free entries, operator-valued free probability, random operators}
\subjclass[2000]{46L54, 60B10, 60B20}
\begin{document}

\begin{abstract}
We study matrices whose entries are free or exchangeable noncommutative elements in some tracial  $W^*$-probability space. More precisely, we consider operator-valued Wigner and Wishart matrices and prove quantitative convergence to operator-valued semicircular elements over some subalgebra in terms of Cauchy transforms and the Kolmogorov distance.  As direct applications, we obtain explicit rates of convergence  for a large class of  random block matrices with independent or correlated blocks. Our approach relies  on a noncommutative extension of the Lindeberg method and operator-valued Gaussian  interpolation techniques. 
\end{abstract}

\maketitle

\section{Introduction}

One of the first and  fundamental results in random matrix theory is that  the empirical spectral distribution of a Wigner matrix whose entries are  i.i.d. random variables converges in distribution to the semicircular law.  When considering matrices with exchangeable entries, Chatterjee \cite{Ch-06} showed that the limiting distribution is again semicircular. The main contribution of this paper is an extension of the latter result to operator-valued matrices with exchangeable entries. An example of such matrices are some random block matrices for they can be seen as matrices with entries in some  noncommutative algebra. For instance, if the matrix blocks are  i.i.d.  then they are exchangeable in the noncommutative space of random matrices and fit nicely in the framework of operator-valued matrices with exchangeable entries.

 More precisely, the entries of the  considered matrices consist of a \emph{finite}  family of exchangeable elements in some finite-dimensional $W^*$-probability space $(\M , \tau)$ whose distribution with respect to $\tau$ is invariant under   permutations.  
 We prove that the  distribution of operator-valued Wigner and Wishart matrices is close to that of an operator-valued semicircular element over some subalgebra and give precise \emph{quantitative estimates} for the associated Cauchy transforms. Whenever the limiting distribution is H\"older continuous, these estimates can be passed immediately onto the Kolmogorov distance.
 
Immediate consequences are {quantitative estimates} of Cauchy transforms, and in some cases on the Kolmogorov distance, of random block matrices with independent blocks, or equivalently, random matrices in which the  blocks  are themselves correlated but have  i.i.d. entries. These models gained lots of attention and were investigated in \cite{Girko2000, Thorbjornsen00,Oraby2007,Bryc2008,Ra-Or-Br-Sp-08,Diaz, An-Er-Kr, Erdos-al-19, Alt-al-Kronecker-19}. 

A main ingredient in our proof concerns an independence structure hidden behind exchangeability.  In fact, we prove in Section \ref{section:exch}  that  sums of exchangeable operators are close  in distribution to the expectation of sums of  independent averaged Gaussian operators. This first step of randomization, which consists of replacing a family of operators by a family of random operators, is new in the noncommutative setting, as it involves the structure of noncommutative probability spaces together  with the classical notion of randomness. It might  be surprising or seem unnatural but it is a consequence of the fact that invariance under permutations is a commutative concept; contrary to other notions of invariance, such as quantum exchangeability.
The distribution of matrices in random operators, is then shown to be close to that of an operator-valued semicircular element.

Another instance of operator-valued matrices with exchangeable entries is when the entries are identically distributed and freely independent, in the sense of Voiculescu, or more generally satisfying a certain noncommutative notion of independence, as studied in \cite{Voiculescu90,Shlyakhtenko97,Ryan-98,Ni-Sh-Sp-02,Popa-Hao-17a,Popa-Hao-17b,Liu2018}. In Sections \ref{section:wignerfree} and \ref{Section:Wishart-free}, we also prove  that the  distributions of  Wigner and Wishart matrices in  free entries are close to that of  operator-valued semicircular elements over some subalgebra of matrices. In the case of free entries, we can even go beyond this and allow different distributions of the entries, thus extending the above cited results to matrices with variance profile. The  provided  explicit quantitative estimates for the associated Cauchy transforms can be passed in some cases  to the Kolmogorov distance.

In both cases, our approximation techniques rely on the Lindeberg method which we extend to Cauchy transforms in the noncommutative setting.  This method is also known in probability theory as the replacement trick and has been first introduced by Lindeberg \cite{Lindeberg} to give an  elegant alternative and illustrative proof of the CLT of sums of independent random variables. It has later become widely applied in various problems and specially for establishing explicit rates of convergence. Contributions and refinements were later carried on this method to extend it to more general functions than sums and also for dependent variables. In the context of random matrices, it was first  extended by Chatterjee \cite{Ch-06} to study matrices with exchangeable entries and was later employed to study different questions in random matrix theory as in   \cite{Tao-Vu-11,Tao-Vu-14,Ba-Me-13, Ba-Me-Pe-13,Ba-14}. In the noncommutative setting, this method was employed to polynomials  in \cite{Kargin} to generalize Voiculescu's {free} CLT and in \cite{Deya-Nourdin} to prove an invariance principle for multilinear homogeneous sums in free elements. In this paper, we extend this method to Cauchy transforms and refine it so that it still provides good quantitative estimates not only in the free case but also in the exchangeable case.

The paper is organized as follows: we recall in Section \ref{Preliminaries} some basic definitions and tools from free probability theory. In Section \ref{section:Lindeberg}, the Lindeberg method is extended to the noncommutative setting, then applied to general sums in free or exchangeable elements, and finally to operator-valued matrices with exchangeable entries.  Being of independent interest, the proof of  the main step of the exchangeable case is given separately in  Section \ref{Section:proof-main-result}. These results are then applied in Section \ref{section:Op-v-matrices} to study the distribution of operator-valued matrices of the form $\frac{1}{\sqrt{2}}(A_N+A_N^*)$ and in Section \ref{section:Wigner} for operator-valued Wigner matrices.  These results are  in turn extended in Section \ref{section:Wishart} to cover operator-valued Wishart matrices. Applications to block random matrices are also given all sections.

\tableofcontents

\section{Preliminaries}\label{Preliminaries}
In this section, we fix the notation and review some concepts and tools from the scalar- and operator-valued free probability theory. 
\paragraph*{\bf Noncommutative probability spaces}
We refer to a pair $(\M,\tau)$, consisting of a von Neumann algebra $\M$ and a faithful normal tracial state $\tau:\M\to \mathbb{C}$, as a \emph{tracial $W^*$-probability space}. 
The \emph{operator norm} is denoted by $\|\cdot\|_{L^\infty(\M,\tau)}$, or by $\|\cdot\|_{\infty}$ when the context is clear. The \emph{$L^p$-norms} are defined, for all $p\geq 1$ and all $x\in \M$, by
$$\|x\|_{L^p(\M,\tau)}=[\tau |x|^p]^{1/p},$$
and can be denoted by $\|\cdot\|_{L^p}$ when the context is sufficiently clear. We  consider random elements in $\M$ as elements of $L^{\infty -}(\Omega,\mathbb{P};\M):= \bigcap_{1\leq p<\infty} L^p(\Omega,\mathbb{P};\M)$ with $\mathbb{E} \circ \tau$ as state. Here, $L^p(\Omega,\mathbb{P})$ is some classical probability space and $\mathbb{E}$ is the associated expectation. 

If $(\M,\tau)$ and $(\N,\varphi)$ are two tracial $W^*$-probability spaces, the tracial $W^*$-probability space $(\M\otimes \N,\tau\otimes \varphi)$ is the tensor product $\M\otimes \N$ of von Neumann algebra endowed with the unique faithful normal tracial state $\tau\otimes \varphi$ such that, for all $x\in \M$ and $y\in \N$, $\tau\otimes \varphi(x\otimes y)=\tau(x)\varphi(y).$
For all $x\in \M$ and $y\in \N$ and $p\in [1,\infty]$ we have
$$\|x\otimes y\|_{L^p(\M\otimes \N,\tau\otimes \varphi)} =\|x\|_{L^p(\M,\tau)}\cdot \|y\|_{L^p(\N,\varphi)}.$$
Finally, for all $x\in \M$ and $y\in \N$, we let $x\otimes^* y := x \otimes y  + x^* \otimes y^*$.

\paragraph*{\bf Exchangeability} Elements $x_1,\ldots,x_n\in \M$ are said to be \emph{exchangeable} with respect to $\tau$ if, for any polynomial $P$ in $2n$ noncommutative  variables, and any permutation $\sigma:\{1,\ldots,n\}\to \{1,\ldots,n\}$, we have
$$\tau[P(x_1,\ldots,x_n,x_1^*,\ldots,x_n^*)]=\tau[P(x_{\sigma(1)},\ldots,x_{\sigma(n)},x_{\sigma(1)}^*,\ldots,x_{\sigma(n)}^*)].$$
An infinite sequence $(x_n)_{n\geq 1} $ in $\M$ is called \emph{exchangeable} with respect to $\tau$ if, for any $n\geq 1$, $x_1,\ldots,x_n\in \M$ are exchangeable. The tail subalgebra $\mathcal{M}_{tail}$ is defined as $\cap_{n\geq 1} \mathcal{B}_n$, where $\mathcal{B}_n$ is the von Neumann subalgebra generated by $(x_k)_{k\geq n} $.

\paragraph*{\bf Cauchy Transform}
For all element $x\in \M$ with negative imaginary part, we denote by $\R_x$ the \emph{resolvent} $\R_x(z)=(z1_{\M}-x)^{-1}$, which is defined on the upper half-plane $\mathbb{C}^+=\{u+iv\in \mathbb{C}:v>0\}$. If $x$ is self-adjoint, the resolvent captures the spectral properties of $x$: for all $z\in \mathbb{C}^+$,
$$\tau[\R_x(z)]=\int_\mathbb{R}\frac{1}{z-t}\diff \mu_x(t),$$
where $\mu_x$ is the \emph{spectral measure} of $x$, i.e. the unique probability measure on $\mathbb{R}$ such that the moments of $x$ are the same as the moments of the probability measure $\mu_x$. In other words, $\tau\circ \R_x$ is the Cauchy transform of the spectral measure of $x$. As an immediate consequence, the pointwise convergence, as $n$ goes to $\infty$, of $\tau[\R_{x_n}(z)]$ to the Cauchy transform of a measure $\nu$ implies the weak convergence of the spectral measure of $x_n$ to $\nu$.

\paragraph*{\bf Operator-valued semicircular}
Let $t>0$. A \emph{semicircular element with variance} $t$ (or with covariance mapping $\eta:\mathbb{C}\ni z\to tz\in \mathbb{C}$) is just a self-adjoint element $s$ whose spectral measure is the semicircular measure
$$\frac{1}{2\pi t}\sqrt{4t-\lambda^2}\cdot 1_{[-2\sqrt{t},2\sqrt{t}]}(\lambda)\diff \lambda.$$

If $\B$ is a von Neumann subalgebra of $\mathcal{M}$, there exists a unique linear map $\tau[\cdot |\B]:\M\to \B$, called the \emph{conditional expectation} onto $\B$, such that for all $x\in \M$ and $y_1,y_2\in \B$ 
$$ \tau[y_1xy_2|\B]=y_1\tau[x|\B]y_2 \quad
 \text{and} \quad \tau[\tau[x|\B]]=\tau[x].$$
Let $\eta:\B\to \B$ be a completely  positive linear mapping. The \emph{operator-valued Cauchy transform of an operator-valued semicircular element} with covariance mapping $\eta$ is an analytic map $G: \mathbb{H}^+(\B) \to \mathbb{H}^-(\B)$ that satisfies,  
\begin{equation}\label{OVSC-CauchyTransform}
1+\eta(G(b)) G(b) = b G(b) \qquad \text{for all $b\in\mathbb{H}^+(\B)$},
\end{equation}
where $$\mathbb{H}^\pm(\B):= \{ b\in\B \mid \exists \varepsilon>0:\ \pm\Im(b) \geq \varepsilon 1 \}$$ 
being the upper and lower half-plane of $\B$, respectively, where we use the notation $\Im(b) := \frac{1}{2i}(b-b^*)$; see \cite[Theorem 4.1.12]{Speicher-98} for a proof of \eqref{OVSC-CauchyTransform}.
We refer to \cite[Chapter 9]{Mingo-Speicher} for more details on operator-valued semicircular elements and on operator-valued Cauchy transforms.

For our purposes, we will need to study the scalar-valued Cauchy transform $g(z)$, $z \in \mathbb{C}^+$, of some operator-valued semicircular elements. Note that $g(z)=\tau(G(z))$ with $G(z)$ being the unique solution of  \eqref{OVSC-CauchyTransform} with asymptotic behavior $G(z) \sim \frac{1}{z}1_\B$ for $z\rightarrow \infty$. In practice, $G(z)$ is the limit of iterates $g_n=\R_{\eta(g_{n-1})}(z)$ for any initial point $g_0$ with negative imaginary part, see \cite{Helton-RFar-Speicher}.

\paragraph*{\bf Freeness and freeness with amalgamation}We say that unital subalgebras $(\mathcal{M}_i)_{i\in I}$ of $\M$ are \emph{free} if whenever we have $m \geq 2$ and   $x_1,\ldots, x_m\in \M$ such that
 $\tau[x_j]=0$  for $j=1,\ldots, m$, 
 $x_j\in \mathcal{M}_{i(j)}$ with $i(j)\in I$
and  $i(1)\neq i(2)$, $i(2)\neq i(3)$ , $\ldots$, $i(m-1)\neq i(m)$,
then 
$$\tau[x_1\cdots x_m]=0.$$
 Elements in $\M$ are called freely independent or free if the $\ast$-algebras generated by them are free. Similarly we define freeness with amalgamation over the operator-valued probability space $\B \subset \M_i$ for all $i \in I$. The subalgebras $(\mathcal{M}_i)_{i\in I}$ of $\M$ are \emph{free with amalgamation over $\B$} (or free with amalgamation with respect to the conditional expectation $\tau[ \cdot |\B]$), if whenever we have $m \geq 2$ and   $x_1,\ldots, x_m\in \M$ such that
 $\tau[x_j|\B]=0$  for $j=1,\ldots, m$, 
 $x_j\in \mathcal{M}_{i(j)}$ with $i(j)\in I$
and  $i(1)\neq i(2)$, $i(2)\neq i(3)$ , $\ldots$, $i(m-1)\neq i(m)$,
then 
$$\tau[x_1\cdots x_m|\B]=0.$$
Elements $\{x_i\}_{i\in I} \in \M$ are called  free with amalgamation over $\B$ if the algebras  $\langle\B,x_i\rangle_{i \in I}$  are also so. 

\section{The Noncommutative Lindeberg method}\label{section:Lindeberg}
Our  main results  rely on the Lindeberg method  which is a  well-known approximation technique in classical probability theory. The aim of this section is to extend this method to the noncommutative setting of a tracial $W^*$-probability space $(\M, \tau)$, and in particular to the setting of free independence and the setting of exchangeability. We end this section by applying the noncommutative Lindeberg method to operator-valued matrices. 

\subsection{General case}The noncommutative Lindeberg method will allow us to compare  Cauchy transforms of elements of the form $\sum_i x_i$ with those of some other elements $\sum_i y_i$ in $\M$. The main idea behind this technique is to write the difference of the resolvents as a telescoping sum,  replace the $x_i$'s by the $y_i$'s, one at a time, and finally give quantitative error bounds. This swapping process is known as the Lindeberg \emph{replacement trick} and allows such approximations under mild moment conditions. 
\begin{theorem}\label{thm:Lindeberg}
Let $(\M,\tau)$ be a tracial $W^*$-probability space. Let $(x_1,\dots,x_n)$ and $(y_1,\dots,y_n)$ be $n$-tuples of self-adjoint elements in $\M$. Setting for any $i \in \{1, \ldots ,n \}$, 
$$
\mathbf z_i = x_1 +\cdots + x_i+y_{i+1}+ \cdots+ y_n
$$
and $$ \mathbf z_i^0 = x_1 +\cdots + x_{i-1}+y_{i+1}+ \cdots + y_n
$$
then, for any   $z\in \mathbb{C}^+$, we have
\[
\big|\tau [\R_{\mathbf z_n}(z)] - \tau [\R_{\mathbf z_0}(z)] \big| \leq \sum_{i=1}^n (|P_i|+|Q_i|+|T_i|), 
\]
where, for any $i \in \{1, \ldots ,n \}$,
\[
P_i= \tau\big[\R_{\mathbf z_i^0}(z)(x_i-y_i)\R_{\mathbf z_i^0}(z) \big] ,
\]
$$
Q_i=  \tau\big[ \R_{\mathbf z_i^0}(z)x_i\R_{\mathbf z_i^0}(z)x_i\R_{\mathbf z_i^0}(z) \big]  - \tau\big[\R_{\mathbf z_i^0}(z)y_i\R_{\mathbf z_i^0}(z)y_i\R_{\mathbf z_i^0}(z)\big] ,
$$
and
$$
T_i=  \tau\Big[\R_{\mathbf z_i}(z) x_i \R_{\mathbf z_i^0}(z)x_i \R_{\mathbf z_i^0}(z) x_i \R_{\mathbf z_i^0}(z)\Big] 
- \tau\Big[\R_{\mathbf z_{i-1}}(z) y_i\R_{\mathbf z_i^0}(z) y_i\R_{\mathbf z_i^0}(z) y_i\R_{\mathbf z_i^0}(z) 
\Big] .
$$
Moreover, if $x_1,\dots,x_n,y_1,\dots,y_n$ are \emph{random} self-adjoint elements in $\M$, we have 
\[
\Big|\mathbb{E}\Big[ \tau[\R_{\mathbf z_n}(z)] - \tau[\R_{\mathbf z_0}(z)]\Big] \Big| \leq \sum_{i=1}^n (|\mathbb{E}[P_i]|+|\mathbb{E}[Q_i]|+|\mathbb{E}[T_i]|).
\]
\end{theorem}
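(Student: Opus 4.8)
The strategy is the classical Lindeberg telescoping idea, transported to the resolvent (Cauchy transform) setting, with the second-order resolvent expansion truncated at order three. I would start by writing the difference as a telescoping sum over the intermediate elements $\mathbf{z}_i$:
\[
\tau[G_{\mathbf z_n}(z)] - \tau[G_{\mathbf z_0}(z)] = \sum_{i=1}^n \big(\tau[G_{\mathbf z_i}(z)] - \tau[G_{\mathbf z_{i-1}}(z)]\big),
\]
noting that $\mathbf z_i$ and $\mathbf z_{i-1}$ differ only in the $i$-th slot ($x_i$ versus $y_i$), and both are obtained from the common element $\mathbf z_i^0$ by adding $x_i$ (resp. $y_i$). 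So the whole problem reduces to estimating a single increment $\tau[G_{\mathbf z_i^0 + x_i}(z)] - \tau[G_{\mathbf z_i^0 + y_i}(z)]$ in terms of the first and second moments of $x_i$ and $y_i$ against the "frozen" resolvent $G_{\mathbf z_i^0}(z)$.

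The analytic heart is the resolvent identity. Writing $G = G_{\mathbf z_i^0}(z)$ and, say, $G_x = G_{\mathbf z_i^0 + x_i}(z) = G_{\mathbf z_i}(z)$, the second resolvent identity gives $G_x = G + G x_i G_x$. Iterating this twice yields the exact third-order expansion
\[
G_x = G + G x_i G + G x_i G x_i G + G x_i G x_i G x_i G_x,
\]
and similarly for $G_y = G_{\mathbf z_{i-1}}(z)$ with $y_i$ in place of $x_i$. Applying $\tau$, subtracting, and using that $\tau$ is tracial (to write $\tau[(x_i G)^3 G_x]$-type terms compactly), the order-one difference is exactly $P_i$, the order-two difference is exactly $Q_i$, and the remaining two third-order terms are exactly $\pm R_i$. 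Then the triangle inequality over $i$ gives the bound $\sum_i(|P_i|+|Q_i|+|R_i|)$. I should be a little careful that each $\mathbf z_i^0$, $\mathbf z_i$, $\mathbf z_{i-1}$ is self-adjoint so that the resolvents are well-defined and bounded by $1/\Im z$ on $\mathbb{C}^+$ — this is where self-adjointness of all the $x_j,y_j$ is used — but no actual size estimate on $P_i,Q_i,R_i$ is needed for this statement; they appear as named quantities.

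For the random case, the argument is purely formal: take expectations of the (deterministic, pathwise) identity $\tau[G_{\mathbf z_n}(z)] - \tau[G_{\mathbf z_0}(z)] = \sum_i(P_i+Q_i+R_i)$, move $\mathbb{E}$ inside the finite sum by linearity, and apply the triangle inequality for $\mathbb{E}$ and then over $i$. The only point needing a word of justification is that all quantities are integrable, which follows since each resolvent has operator norm at most $1/\Im z$ almost surely, so $|P_i|,|Q_i|,|R_i|$ are bounded by a deterministic constant depending only on $\Im z$ and $\mathbb{E}\|x_i\|^k,\mathbb{E}\|y_i\|^k$ — but even without that, the statement as phrased is just the expectation of an almost-sure identity followed by triangle inequalities, so it requires nothing beyond linearity of $\mathbb{E}$ on a finite sum.

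The main obstacle, if any, is purely bookkeeping: making sure the expansion is iterated exactly enough times so that the first-order term collapses into the symmetric-looking $P_i=\tau[G x_i G] - \tau[G y_i G]$ (note $P_i$ as written already has both $x_i$ and $y_i$ — so in fact the first-order parts of $G_x$ and $G_y$ combine into $P_i$ directly), the second-order parts combine into $Q_i$, and the two third-order remainders are grouped so that the $x_i$-remainder uses $G_{\mathbf z_i}$ and the $y_i$-remainder uses $G_{\mathbf z_{i-1}}$, exactly as in the statement of $R_i$. Using traciality to cyclically rotate $G_{\mathbf z_i}(z)(x_iG)(x_iG)(x_iG)$ into the form $G_{\mathbf z_i}(z)(x_iG)^3$ displayed in $R_i$ is the last cosmetic step. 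There is no deep difficulty; the content is entirely in the algebraic identity plus the triangle inequality.
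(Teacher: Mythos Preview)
Your approach is essentially identical to the paper's: telescope over $i$, expand each $G_{\mathbf z_i}(z)$ and $G_{\mathbf z_{i-1}}(z)$ about $G_{\mathbf z_i^0}(z)$ to third order via the iterated resolvent identity, subtract, apply $\tau$, and use the triangle inequality. The paper packages the expansion as a Taylor-type lemma for inverses, but the content is the same iteration you describe.

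One small bookkeeping point: you iterate the identity in the form $G_x = G + G x_i G_x$, which produces the remainder $(Gx_i)^3 G_x$. Under $\tau$ and traciality this becomes $\tau[G_x\, G x_i G x_i G x_i]$, which is \emph{not} the same as the stated $\tau[G_x (x_i G)^3] = \tau[G_x x_i G x_i G x_i G]$; the leftmost $G$ and $G_x$ sit in different positions and do not commute in general. To land exactly on the $R_i$ of the statement you should iterate the other form, $G_x = G + G_x x_i G$, which directly gives the remainder $G_x (x_i G)^3$ with no need for traciality. This is purely cosmetic and does not affect correctness of the bound, since either remainder would serve equally well; it only matters if you want the named quantity $R_i$ to match verbatim.
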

\begin{remark}
The above theorem can be easily extended to more general differentiable functions $f(x_1, \ldots , x_n)$ in $n$ noncommuting variables and is not restricted to Cauchy transforms of partial sums. In that case, the terms of the telescoping sum are developed using Taylor expansions  and  then the estimates are obtained in terms of  noncommutative partial derivatives. However, the restriction we made was essential for our purposes and in particular for dealing with exchangeable variables in Section \ref{section:exch}.
\end{remark}
\begin{proof}[Proof of Theorem \ref{thm:Lindeberg}]
For any $z \in \mathbb{C}^+$, we write the difference of the resolvents as a telescoping sum as follows: 
$$
R_{\mathbf z_n}(z) -R_{\mathbf z_0}(z) 
= \sum^n_{i=1} R_{\mathbf z_i}(z) - R_{\mathbf z_{i-1}}(z)= \sum^n_{i=1} \big(R_{\mathbf z_i}(z)- \R_{\mathbf z_i^0}(z)\big) - \sum^n_{i=1} \big(\R_{\mathbf z_{i-1}}(z) - \R_{\mathbf z_i^0}(z)\big).
$$
To pursue the proof, we shall use the following expansion:
\begin{lemma}\label{lem:Taylor_resolvents}
 Let ${  a}$ and ${  b}$ be  invertible in $\M$, then 
\[
{  a}^{-1} = {  b}^{-1} + {  a}^{-1}\big({  b} - {  a} \big){  b}^{-1},
\]
and more generally, for each $m\in\mathbb{N}_0$, the Taylor formula (with remainder term) holds:
\begin{align*}
{  a}^{-1} = \sum^m_{k=0}  {  b}^{-1}\big[\big({  b} - {  a}\big){  b}^{-1}\big]^k
              +  {  a}^{-1}\big[\big({  b} - {  a}\big){  b}^{-1}\big]^{m+1}.
\end{align*}
\end{lemma}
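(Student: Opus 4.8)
The plan is to prove both identities by elementary algebra, with the general formula obtained from the first one by induction. There is no analytic input required: these are exact finite identities in the unital algebra $\M$, so no convergence of Neumann-type series is involved.

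First I would verify the base identity ${  a}^{-1} = {  b}^{-1} + {  a}^{-1}({  b}-{  a}){  b}^{-1}$ by multiplying out the right-hand side. Expanding,
\[
{  a}^{-1}({  b}-{  a}){  b}^{-1} = {  a}^{-1}{  b}\,{  b}^{-1} - {  a}^{-1}{  a}\,{  b}^{-1} = {  a}^{-1} - {  b}^{-1},
\]
so that ${  b}^{-1} + {  a}^{-1}({  b}-{  a}){  b}^{-1} = {  a}^{-1}$, which is the claim. (Equivalently this is the familiar resolvent identity ${  a}^{-1}-{  b}^{-1} = {  a}^{-1}({  b}-{  a}){  b}^{-1}$.)

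Next I would prove the Taylor formula with remainder by induction on $m$. The case $m=0$ is precisely the base identity just established. Assuming the formula holds for some $m\in\mathbb{N}_0$, I would rewrite the remainder term $ {  a}^{-1}\big[({  b}-{  a}){  b}^{-1}\big]^{m+1}$ by applying the base identity to the leftmost factor ${  a}^{-1}$:
\[
{  a}^{-1}\big[({  b}-{  a}){  b}^{-1}\big]^{m+1}
= \Big({  b}^{-1} + {  a}^{-1}({  b}-{  a}){  b}^{-1}\Big)\big[({  b}-{  a}){  b}^{-1}\big]^{m+1}
= {  b}^{-1}\big[({  b}-{  a}){  b}^{-1}\big]^{m+1} + {  a}^{-1}\big[({  b}-{  a}){  b}^{-1}\big]^{m+2}.
\]
Substituting this into the formula for $m$ produces the $k=m+1$ term of the sum together with the new remainder ${  a}^{-1}\big[({  b}-{  a}){  b}^{-1}\big]^{m+2}$, which is exactly the formula for $m+1$. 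This closes the induction.

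I do not expect any genuine obstacle here; the only points to be slightly careful about are that ${  a}$ and ${  b}$ are genuinely invertible (so ${  a}^{-1}$ and ${  b}^{-1}$ make sense) and that one never commutes factors, since $\M$ is noncommutative — the bracket $\big[({  b}-{  a}){  b}^{-1}\big]^{k}$ must be read as an ordered product. In the intended application ${  a} = z1_\M - \mathbf z_i$ and ${  b} = z1_\M - \mathbf z_i^0$, both of which lie in $\mathbb{C}^+$-shifted self-adjoint elements and are therefore invertible, with ${  b}-{  a} = \mathbf z_i - \mathbf z_i^0 = x_i$ (or $y_i$); this is what will later feed the telescoping estimate of Theorem \ref{thm:Lindeberg}.
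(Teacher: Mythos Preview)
Your proof is correct and is the standard argument; the paper actually states this lemma without proof, so there is nothing to compare against. Your inductive step via the base identity is exactly what is needed, and your remarks on noncommutativity and on the intended choice ${a}=z1_\M-\mathbf z_i$, ${b}=z1_\M-\mathbf z_i^0$ are appropriate.
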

\noindent
Applying Lemma \ref{lem:Taylor_resolvents} with $m=3$, gives for any  $i \in \{1, \ldots ,n \}$,
\begin{multline*}
\R_{\mathbf z_i}(z)- \R_{\mathbf z_i^0}(z)
 =   \R_{\mathbf z_i^0}(z)  x_i \R_{\mathbf z_i^0}(z)
  +
\R_{\mathbf z_i^0}(z) x_i \R_{\mathbf z_i^0}(z)  x_i  \R_{\mathbf z_i^0}(z)
\\ 
+\R_{\mathbf z_i}(z) x_i  \R_{\mathbf z_i^0}(z) x_i  \R_{\mathbf z_i^0}(z) x_i  \R_{\mathbf z_i^0}(z),
\end{multline*}
and analogously,
\begin{multline*}
\R_{\mathbf z_{i-1}}(z)- \R_{\mathbf z_i^0}(z)
 =   \R_{\mathbf z_i^0}(z)  y_i \R_{\mathbf z_i^0}(z)
  +
\R_{\mathbf z_i^0}(z) y_i \R_{\mathbf z_i^0}(z) y_i \R_{\mathbf z_i^0}(z)
\\ 
+\R_{\mathbf z_{i-1}}(z) y_i\R_{\mathbf z_i^0}(z)  y_i  \R_{\mathbf z_i^0}(z) y_i \R_{\mathbf z_i^0}(z).
\end{multline*}
The assertion follows by taking their difference and applying $\tau $ or $\mathbb{E}\circ \tau $ in the random variant.
\end{proof}

\subsection{Freeness}\label{section:freeness}
We address in this section  sums of freely independent elements in  $(\M, \tau)$. We shall see how a simple application of the noncommutative Lindeberg method in Theorem \ref{thm:Lindeberg} gives quantitative estimates to the difference of Cauchy transforms under the  mere condition of matching conditional expectations of  first and second orders. 
\begin{theorem}\label{theo:Lindeberg-free}
Let $(\M,\tau)$ be a tracial $W^*$-probability space and $\mathcal B$ be a von Neumann subalgebra of $\M$. Let $(x_1,\dots,x_n)$ and $(y_1,\dots,y_n)$  be two $n$-tuples of self-adjoint elements in $\M$ which are free with amalgamation over $\mathcal B$ and such that $\tau[x_i|\mathcal B]=\tau[y_i|\mathcal B]=0$ and $\tau[x_ibx_i|\mathcal B]=\tau[y_iby_i|\mathcal B]$  for any $b\in \mathcal B$ and $1\leq i \leq n$.
Setting
$$\mathbf x = \sum_{i=1}^nx_i\ \text{and}\ \ \mathbf y= \sum_{i=1}^ny_i$$
then, for any   $z\in \mathbb{C}^+$, we have
\[
\big|\tau [\R_{\mathbf x}(z)] - \tau [\R_{\mathbf y}(z)] \big|
\leq \frac{K_\infty K_2^2}{Im (z)^4 } n, 
\]
where $K_\infty=\max_i \big(\|x_i \|_{\infty}+\|y_i \|_{\infty}\big)$ and $K_2=\max_i (\|x_i\|_{L^2} + \|y_i\|_{L^2})$.
\end{theorem}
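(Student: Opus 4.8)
The plan is to feed the pair $(\mathbf z_n,\mathbf z_0)=(\mathbf x,\mathbf y)$ into the telescoping estimate of Theorem~\ref{thm:Lindeberg}, to show that the first two families of error terms vanish identically, and then to bound the third‑order remainders by elementary norm estimates. The structural point that makes this work is that $\mathbf z_i^0=x_1+\cdots+x_{i-1}+y_{i+1}+\cdots+y_n$ involves neither $x_i$ nor $y_i$, so the operator $A_i:=G_{\mathbf z_i^0}(z)$ — and all its powers — lies in the von Neumann algebra $\mathcal A_i$ generated by $\mathcal B$ together with $\{x_j:j<i\}\cup\{y_j:j>i\}$; by associativity of amalgamated free products, $\mathcal A_i$ is free with amalgamation over $\mathcal B$ from $x_i$ and, separately, from $y_i$, and $A_i$ is the \emph{same} operator whether we look at the $x_i$‑term or the $y_i$‑term.

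I would first record, as a short auxiliary lemma, two elementary facts about amalgamated freeness. If $\mathcal A\supseteq\mathcal B$ is free with amalgamation over $\mathcal B$ from an element $x$ with $\tau[x|\mathcal B]=0$, then for all $c,c_0,c_1,c_2\in\mathcal A$,
\[
\tau[cx]=0,\qquad \tau[c_0xc_1xc_2]=\tau\big[c_0\,\tau[\,x\,\tau[c_1|\mathcal B]\,x\mid\mathcal B\,]\,c_2\big].
\]
Both are proved by decomposing each of $c_0,c_1,c_2$ and $x$ as its $\mathcal B$‑valued conditional expectation plus a $\mathcal B$‑centered remainder, expanding the product, and discarding every alternating product of $\mathcal B$‑centered elements drawn from $\mathcal A$ and from the algebra generated by $x$, whose conditional expectation onto $\mathcal B$ vanishes by definition of freeness with amalgamation.

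Granting this, the vanishing of $P_i$ and $Q_i$ is immediate. By the trace property $\tau[A_ix_iA_i]=\tau[A_i^2x_i]$, which vanishes by the first identity since $A_i^2\in\mathcal A_i$; likewise $\tau[A_iy_iA_i]=0$, so $P_i=0$. The second identity shows that $\tau[A_ix_iA_ix_iA_i]$ depends on $x_i$ only through the value $\tau[\,x_i\,\tau[A_i|\mathcal B]\,x_i\mid\mathcal B\,]$; applying the same identity with $y_i$ in place of $x_i$ and the same operator $A_i$, and invoking the hypothesis $\tau[x_ibx_i|\mathcal B]=\tau[y_iby_i|\mathcal B]$ with $b=\tau[A_i|\mathcal B]\in\mathcal B$, we obtain $\tau[A_ix_iA_ix_iA_i]=\tau[A_iy_iA_iy_iA_i]$, that is $Q_i=0$.

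It then remains only to estimate $R_i$. Here I would use $\|G_{\mathbf z}(z)\|_\infty\le\Im(z)^{-1}$, valid for any self‑adjoint $\mathbf z$, together with the noncommutative Hölder inequality $|\tau[a_1\cdots a_k]|\le\prod_j\|a_j\|_{L^{p_j}}$ for exponents with $\sum_jp_j^{-1}=1$: in $\tau[G_{\mathbf z_i}(z)(x_iG_{\mathbf z_i^0}(z))^3]$ one assigns the exponent $\infty$ to the four resolvent factors and to the middle copy of $x_i$, and the exponent $2$ to the two outer copies of $x_i$, obtaining the bound $\Im(z)^{-4}\,\|x_i\|_\infty\|x_i\|_{L^2}^2$, and symmetrically $\Im(z)^{-4}\,\|y_i\|_\infty\|y_i\|_{L^2}^2$ for the $y_i$‑term. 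Hence $|R_i|\le\Im(z)^{-4}\big(\|x_i\|_\infty\|x_i\|_{L^2}^2+\|y_i\|_\infty\|y_i\|_{L^2}^2\big)\le\Im(z)^{-4}\,K_\infty K_2^2$, and summing $\sum_{i=1}^n(|P_i|+|Q_i|+|R_i|)$ gives the claimed bound. The one genuinely delicate step is the bookkeeping behind $Q_i=0$: after expanding $A_i=\tau[A_i|\mathcal B]+A_i^\circ$ one must check that every term of $\tau[A_ix_iA_ix_iA_i]$ not killed outright by amalgamated freeness recombines into an expression in $\tau[\,x_i\,b\,x_i\mid\mathcal B\,]$ alone — which is exactly why the second‑moment hypothesis is imposed for every $b\in\mathcal B$ rather than merely for $b=1$.
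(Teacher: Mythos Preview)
Your proof is correct and follows essentially the same route as the paper's: apply Theorem~\ref{thm:Lindeberg}, use amalgamated freeness of $x_i$ and $y_i$ from $G_{\mathbf z_i^0}(z)$ to make $P_i$ and $Q_i$ vanish, and bound $R_i$ via $\|G_{\cdot}(z)\|_\infty\le\Im(z)^{-1}$ and H\"older. Your treatment of $Q_i$ is in fact slightly more explicit than the paper's, which writes $\tau[A_i\,\tau[x_iA_ix_i\mid\mathcal B]\,A_i]=\tau[A_i\,\tau[y_iA_iy_i\mid\mathcal B]\,A_i]$ and leaves implicit the reduction $\tau[x_iA_ix_i\mid\mathcal B]=\tau[x_i\,\tau[A_i\mid\mathcal B]\,x_i\mid\mathcal B]$ that you spell out.
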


\begin{proof}
Let us assume without loss of generality that $(x_1,\dots,x_n,y_1,\dots,y_n)$ are free. The proof is a  direct application of the Lindeberg method in Theorem \ref{thm:Lindeberg}. Note that for any $i = 1 , \ldots , n$, $x_i$ and $y_i$ are centered and free from $\uZ_i^0$ with amalgamation over $\mathcal B$ which implies that
\begin{align*}
\tau \big[\R_{\uZ_i^0}(z) x_i \R_{\uZ_i^0}(z) \big] 
=  \tau\big[\R_{\uZ_i^0}(z) y_i \R_{\uZ_i^0}(z) \big] =0, 
\end{align*}
and thus $P_i=0$. For the same reasons
\begin{align*}
\tau \big[ \R_{\uZ_i^0}(z) x_i \R_{\uZ_i^0}(z) x_i \R_{\uZ_i^0}(z) \big]
&= \tau \big[ \R_{\uZ_i^0}(z)\tau \big[ x_i\R_{\uZ_i^0}(z) x_i|\mathcal B\big]\R_{\uZ_i^0}(z)\big]
\\ &= \tau \big[ \R_{\uZ_i^0}(z)\tau \big[ y_i\R_{\uZ_i^0}(z) y_i|\mathcal B\big]\R_{\uZ_i^0}(z)\big]
\\&= \tau \big[ \R_{\uZ_i^0}(z) y_i \R_{\uZ_i^0}(z) y_i \R_{\uZ_i^0}(z) \big],
\end{align*}
implying that $Q_i = 0$. Considering the third order term $T_i$, we compute
\begin{align*}
\tau\Big[\R_{\mathbf z_i}(z) \big(x_i  \R_{\mathbf z_i^0}(z) \big)^3
\Big] 
&\leq  \|\R_{\mathbf z_i}(z) \big(x_i \R_{\mathbf z_i^0}(z) \big)^3\|_{L^1}\\
& \leq  \| \R_{\mathbf z_i}(z)\|_\infty \| \R_{\mathbf z_i^0}(z)\|_\infty^3 \| x_i \|_\infty  \| x_i \|_{L^2}^2
\\& \leq \frac{1}{Im (z) ^4}\|x_i \|_{\infty} \| x_i \|_{L^2}^2,
\end{align*}
and similarly, we control the second term in $T_i$ and get that $T_i\leq \frac{K_\infty K_2^2}{Im (z)^4 }$. We finally conclude by summing over $i$.
\end{proof}

\subsection{Exchangeable variables}\label{section:exch}
In this section, we  drop the free independence hypothesis and treat instead exchangeable variables in $(\M, \tau)$.  
We shall see how we can approximate  sums of exchangeable operators with independent Gaussian operators 
and give again quantitative estimates to such approximations in terms of Cauchy transforms.

\begin{theorem}\label{theo:Lindeberg-exch}
Let $(\M,\tau)$ and $(\N,\varphi)$ be two tracial $W^*$-probability spaces. Let $(x_1,\dots,x_n)$ be an $n$-tuple of \emph{exchangeable} elements in $\M$.  Consider a family $(N_{i,k})_{1\leq i,k \leq n}$  of independent standard Gaussian random variables and let $(y_1,\dots,y_n)$ be the $n$-tuple of random elements in $\M$ given by
$$y_i=\frac{1}{\sqrt{n}}\sum_{k=1}^n N_{i,k}\left( x_k-\frac{1}{n}\sum_{j=1}^nx_j\right).$$
Let $(a_1,\dots,a_n)$ be an $n$-tuple of elements in $\N$ and set
$$\mathbf x = \sum_{i=1}^nx_i\otimes a_i+x_i^*\otimes a_i^*\ \ \text{and}\ \ \mathbf y= \sum_{i=1}^ny_i\otimes a_i+y_i^*\otimes a_i^*.$$
Then, for any $z\in \mathbb{C}^+$,
$$\big|\tau\otimes \varphi(\R_{\mathbf x}(z)) - \mathbb{E}[\tau\otimes\varphi(\R_{\mathbf y}(z))] \big| \leq \frac{K}{(\Im(z)\wedge 1)^5}n^{-1/4},$$
with
$$
K=C\cdot\left( K_1\|x_1\|_{\infty}n^{1/4}+ K_2^2\|x_1\|^2_{\infty}n^{3/4} +K_\infty K_2^2\|x_1\|_{\infty}^3n^{5/4}+K_\infty^2K_2^2\|x_1\|^4_{\infty}n^{5/4}\right),$$
where $K_\infty=\max_i \| a_i\|_{\infty}, K_2=\max_i \| a_i\|_{L^2}$, $K_1= \|\sum_{i=1}^n a_i\|_{L^1}$ and $C$ is a universal constant. 
\end{theorem}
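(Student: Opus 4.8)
\emph{Overall strategy.} The plan is to apply the noncommutative Lindeberg method (Theorem~\ref{thm:Lindeberg}) inside the tracial $W^*$-probability space $(\M\otimes\N,\tau\otimes\varphi)$, after a harmless centering step, and then to isolate a single delicate estimate which is the true content of the statement.

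\emph{Centering.} Write $\bar x_i := x_i-\frac1n\sum_{j=1}^n x_j$, so that $\sum_i\bar x_i=0$; exchangeability forces $\|x_i\|_\infty=\|x_1\|_\infty$, hence $\|\bar x_i\|_\infty\le 2\|x_1\|_\infty$, and note that $\mathbf y$ is already built from the $\bar x_k$. Replacing $\mathbf x$ by $\tilde{\mathbf x}:=\sum_i \bar x_i\otimes^* a_i$ costs only a first-order resolvent perturbation: since $\mathbf x-\tilde{\mathbf x}=\bigl(\frac1n\sum_j x_j\bigr)\otimes^*\bigl(\sum_i a_i\bigr)$, Lemma~\ref{lem:Taylor_resolvents} with $m=0$ gives
\[
\bigl|\tau\otimes\varphi(G_{\mathbf x}(z))-\tau\otimes\varphi(G_{\tilde{\mathbf x}}(z))\bigr|\le\frac{\|\mathbf x-\tilde{\mathbf x}\|_{L^1}}{\Im(z)^2}\le\frac{2\|x_1\|_\infty K_1}{\Im(z)^2},
\]
which accounts for the first term of the bound. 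It thus suffices to compare $\tilde{\mathbf x}$ with $\mathbf y$.

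\emph{Telescoping and the third-order term.} Apply Theorem~\ref{thm:Lindeberg} to the random self-adjoint elements $X_i:=\bar x_i\otimes^* a_i$ and $Y_i:=y_i\otimes^* a_i$ of $\M\otimes\N$, so that $\mathbf z_0=\tilde{\mathbf x}$, $\mathbf z_n=\mathbf y$, and the quantity to estimate is bounded by $\sum_{i=1}^n(|\E P_i|+|\E Q_i|+|\E R_i|)$. The third-order remainders $R_i$ are handled exactly as in the proof of Theorem~\ref{theo:Lindeberg-free}: $|R_i|\le \Im(z)^{-4}(\|X_i\|_{L^3}^3+\|Y_i\|_{L^3}^3)$ pathwise, with $\|X_i\|_{L^3}^3\le 8\|\bar x_i\|_\infty^3\|a_i\|_\infty\|a_i\|_{L^2}^2\le 64\|x_1\|_\infty^3 K_\infty K_2^2$, and for the Gaussian part $\E\|Y_i\|_{L^3}^3\le 8\,\E[\|y_i\|_{L^3}^3]\,\|a_i\|_\infty\|a_i\|_{L^2}^2$, where a direct moment computation (Cauchy--Schwarz together with $\E\|y_i\|_{L^2}^2=\|\bar x_1\|_{L^2}^2$ and $\E\|y_i\|_{L^4}^4\lesssim\|x_1\|_\infty^4$) gives $\E\|y_i\|_{L^3}^3\lesssim\|x_1\|_\infty^3$. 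Summing over $i$ yields $\sum_i|\E R_i|\lesssim n\,K_\infty K_2^2\|x_1\|_\infty^3/\Im(z)^4$, the third term of the bound.

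\emph{The low-order terms: the main obstacle.} In Theorem~\ref{theo:Lindeberg-free} the terms $P_i$ and $Q_i$ vanish by centering and freeness; here they do not, because the $x_i$ are only exchangeable, and permutation invariance is a \emph{commutative} symmetry which does not force conditional expectations to vanish. Only the $y_i$-linear part of $P_i$ is killed --- since, conditionally on the Gaussians $(N_{l,k})_{l<i}$, the element $y_i$ is centered and independent of $G_{\mathbf z_i^0}$ --- leaving $\E P_i=\E\,\tau\otimes\varphi[G_{\mathbf z_i^0}(\bar x_i\otimes^* a_i)G_{\mathbf z_i^0}]$ together with both parts of $\E Q_i$. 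The crux is to bound $\sum_i\E P_i$ and $\sum_i\E Q_i$, and the plan for this is: (i) perform Gaussian integration by parts in the $N_{i,k}$ --- the operator-valued Gaussian interpolation announced in the abstract --- which turns each expression quadratic in $y_i$ into the average $\frac1n\sum_k(\bar x_k\otimes^* a_i)\,G_{\mathbf z_i^0}\,(\bar x_k\otimes^* a_i)$; (ii) use $\sum_i\bar x_i=0$ and the permutation invariance of $(\bar x_1,\dots,\bar x_n)$, after symmetrizing over relabelings, to pair these averages against the corresponding $\bar x_i$-terms so that the sums over $i$ cancel up to a remainder produced by one further resolvent expansion; (iii) estimate that remainder in $L^\infty$- and $L^2$-norms. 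The $P_i$-sum then leaves a second-order residual of order $\sqrt n\, K_2^2\|x_1\|_\infty^2/\Im(z)^3$, the square root reflecting the cancellation in step~(ii), and the $Q_i$-sum leaves an uncancelled residual of order $n\,K_\infty^2 K_2^2\|x_1\|_\infty^4/\Im(z)^5$; these are the two remaining terms. Making this cancellation explicit and quantitative --- steps (ii)--(iii) --- is where essentially all the difficulty lies, and because that argument is long and of independent interest it is carried out separately in Section~\ref{Section:proof-main-result}; everything else above is routine given Theorem~\ref{thm:Lindeberg} and the computations in the proof of Theorem~\ref{theo:Lindeberg-free}.
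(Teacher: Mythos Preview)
Your centering step and the overall plan are fine, but the Lindeberg you set up is not the one the paper runs, and the part you flag as ``the main obstacle'' is precisely where your sketch and the paper diverge. You apply Theorem~\ref{thm:Lindeberg} directly to $X_i=\bar x_i\otimes^*a_i$ and $Y_i=y_i\otimes^*a_i$; with this choice $\E P_i$ is genuinely nonzero and your plan (``symmetrize over relabelings, then one more resolvent expansion'') does not supply a mechanism to make $\sum_i\E P_i$ small. Note that the swap $x_i\leftrightarrow x_k$ for $k\ge i$ only yields
\[
\E\bigl[\tau\otimes\varphi(G_{\mathbf z_i^0}(\bar x_i\otimes^*a_i)G_{\mathbf z_i^0})\bigr]
=\E\Bigl[\tau\otimes\varphi\Bigl(G_{\mathbf z_i^0}\Bigl(\tfrac{1}{n-i+1}\textstyle\sum_{k\ge i}\bar x_k\otimes^*a_i\Bigr)G_{\mathbf z_i^0}\Bigr)\Bigr],
\]
and since $\sum_{k\ge i}\bar x_k=-\sum_{k<i}\bar x_k$ this does not vanish; summing over $i$ does not help because the resolvents $G_{\mathbf z_i^0}$ change with $i$.

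What the paper actually does (and what your deferral to Section~\ref{Section:proof-main-result} hides) is a different decomposition. It rewrites $\mathbf x=\sum_i u_i\otimes^*b_i$ with the \emph{sequentially centered} $u_i=x_i-\frac{1}{n-i+1}\sum_{j\ge i}x_j$ and matching $b_i=a_i-\frac{1}{n-i}\sum_{j>i}a_j$; this reparametrization is engineered so that the exchangeability swap of Lemma~\ref{lem:exch-keypoint} gives $\E P_i=0$ \emph{exactly}. Moreover, the Lindeberg is not run between $\mathbf x$ and $\mathbf y$: an intermediate $\mathbf v=\sum_i v_i\otimes^*b_i$ is introduced, built from a \emph{second, independent} Gaussian family $(\tilde N_{i,k})$, and the argument splits into (a) Lindeberg between $\mathbf x$ and $\mathbf v$, which produces the $n\,K_\infty K_2^2/\Im(z)^4$ term (from $Q_i$ and the $u$-part of $R_i$), the $n\,K_\infty^2K_2^2/\Im(z)^5$ term (from Gaussian integration by parts in the $v$-part of $R_i$, not from $Q_i$ as you write), and part of the $\sqrt n\,K_2^2/\Im(z)^3$ term (from $Q_i^{(2)}$, where $\sum_i\frac{1}{n-i+1}\lesssim\sqrt n$); and (b) a Gaussian interpolation between $\mathbf v$ and $\bar{\mathbf y}$, which contributes the rest of the $\sqrt n\,K_2^2/\Im(z)^3$ term. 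Your attribution of the four error terms to $P_i$, $Q_i$, $R_i$ is therefore off, and the essential idea you are missing is Chatterjee's sequential recentering $(u_i,b_i)$ together with the two-step Lindeberg\,+\,interpolation structure.
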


In the applications given in the next sections, we will compute the values of $K_\infty$, $K_2$ and $K_1$, which would yield  the boundedness of $K$.

Approximating sums of exchangeable elements with semicirculars using our approach was not possible even when imposing stronger conditions, like quantum exchangeability \cite[page 108]{book-Voi-Sta-Web}. This intermediate approximation with random operators was crucial to our purposes.

Being long and technical, the proof of  Theorem \ref{theo:Lindeberg-exch} is postponed to Section \ref{Section:proof-main-result}.  The results we have presented so far cover quite general sums in tracial $W^*$-probability spaces. In the following, we specify our object of interest and study precisely operator-valued matrices. 

\subsection{Application to an operator-valued matrix}\label{Section:Op-v-exch-entries} In this section, we shall apply the Lindeberg method to the study  of matrices with operator-valued  entries which are exchangeable. We consider a \emph{finite} family of exchangeable operators as entries and prove that  the associated matrix is close in distribution to an operator-valued semicircular element by providing explicit rates of convergence for the Cauchy transforms. 

Let  $(\M, \tau)$ be a tracial $W^*$-probability space and  $\uX=(x_{ij})_{1\leq i,j\leq N} $ be a finite  family of elements in $\M$. We consider the $N \times N$ operator-valued  matrix $A(\ux)$  given by 
\[
[A(\ux)]_{ij}=\frac{1}{\sqrt{2N}}(x_{ij}+x_{ji}^*).
\]
To study its behavior, we  consider its Cauchy transform and apply the results in Sections \ref{section:freeness} and \ref{section:exch}.  With this aim, we set $n=N^2$ and note that the associated linear map {$A : \M^n \rightarrow M_N(\M)$} can be written as follows:
\begin{equation}\label{def:map_matrix}
A(\ux) = \sum_{ i,j=1}^N \big(x_{ij} \otimes a_{ij}+ x_{ij}^* \otimes a_{ij}^* \big)=:\sum_{i,j=1 }^N x_{ij} \otimes^* a_{ij},
\end{equation}
where $a_{ij}=  \frac{1}{ \sqrt{2N}} E_{ij}$ and $(E_{ij})_{1\leq i,j \leq N}$ are the canonical $N\times N$ matrices. 

\begin{theorem}\label{thm:Op-v-matrices-exch}
Let  $(\M, \tau)$ be a tracial $W^*$-probability space with $\M$ being  a direct integral of finite dimensional factors of dimension $\leq d$. Let  $\ux=(x_{ij})_{1\leq i,j\leq N} $ be a family of exchangeable variables in $\M^n$ with $n=N^2$. Set $\bar{x}_{ij}=x_{ij}-\mu$ where $\mu=\frac{1}{n}\sum_{1\leq i,j\leq N} x_{ij}$ and consider the variance function 
\[
\eta : \M \rightarrow \M, 
\qquad 
\eta(b)=\frac{1}{2n}\sum_{i,j=1}^N \Big(\bar{x}_{ij}b\bar{x}_{ij}^*+\bar{x}_{ij}^*b\bar{x}_{ij}\Big).
\] 
 Let $\mathcal{B}$ be the smallest $W^*$-algebra which is closed under $\eta$ and consider a centered $\B$-valued semicircular variable $S$ of variance $E_{\mathcal{B}}(SbS)= \eta(b)$.
 Letting $A(\uX) \in \M_N(\M)$ be the operator-valued matrix defined in \eqref{def:map_matrix} 
then 
there exists a universal constant $C>0$ such that, for any $z\in \mathbb{C}^+$,
$$
\big|(\tau \otimes \tr_N)[\R_{A(\uX)}(z)] - \tau[\R_{S}(z)]\big|
\leq C d^{2}\frac{(|z|\vee \|x_{11}\|_\infty\vee 1)^4}{(\Im(z)\wedge 1)^6}\frac{1}{\sqrt{N}}.
$$
\end{theorem}

As our estimates hold over all the upper complex half-plane, then once the cumulative distribution function of the operator-valued semicircular element $S$ is \emph{H\"older continuous}, the estimates on the Cauchy transforms can be passed immediately to quantitative estimates on the Kolmogorov distance by a simple application of the following theorem from~\cite{Banna-Mai-20}.  This will be the case for some examples we present later in the paper.
\begin{theorem}[Theorem 5.2 of \cite{Banna-Mai-20}]\label{BannaMai}Let $\mu_n$ be a sequence of Borel probability measures on $\mathbb{R}$ and let $\nu$ be any other Borel probability measure on $\mathbb{R}$. Suppose the following:
\begin{itemize}
    \item The cumulative distribution function of $\nu$ is Hölder continuous with exponent $\beta\in (0,1]$.
    \item There are $K>0$, $\rho>0$, $\alpha>0$, $l\geq 0$ and $k\geq 0$ such that, for all $z\in \mathbb{C}^+$ with $\Im(z)<\rho$, the difference between the Cauchy transforms $G_{\mu_n}$ and $G_\nu$ satisfies
    $$\left|G_{\mu_n}(z)-G_{\nu}(z)\right|\leq K\frac{|z|^l}{\Im(z)^k}N^{-\alpha}.$$
    \item We have that $\sup_n \int_\mathbb{R} t^2 d\mu_n(t)<\infty$.
\end{itemize}
Then, $(\mu_n)_{n=1}^\infty$ converges in Kolmogorov distance to $\nu$; in fact, there is $c>0$ such that
$$ {\rm Kol} (\mu_n,\nu)\leq c \cdot N^{-\alpha\cdot \frac{\beta}{2+k+(2-\beta)l}}.$$
\end{theorem}

The proof of Theorem \ref{thm:Op-v-matrices-exch} is divided into  two parts: first we shall apply the Lindeberg method  to approximate the Cauchy transform of $A(\uX)$ by that of a random Gaussian operator-valued matrix, and then prove that the later is close to the Cauchy transform of the semicircular element $S$. 

The first step will be accomplished in the following proposition  which is an application of the Lindeberg method in the exchangeable setting proved in Section \ref{section:exch}:

\begin{proposition}\label{thm:RMstieltjesone}
Let  $(\M, \tau)$ be a tracial $W^*$-probability space. Let  $\ux=(x_{ij})_{1\leq i,j\leq N} $ be a family of exchangeable variables in $\M^n$ with $n=N^2$. Set $\bar{x}_{ij}=x_{ij}-\mu$ where $\mu=\frac{1}{n}\sum_{1\leq i,j\leq N} x_{ij}$. 
Consider a family $(N_{ij,kl})_{1\leq i,j \leq N,1\leq k,l \leq N}$  of $n^2$ independent Gaussian random variables. We consider the random operators $\uY=(y_{ij})_{1\leq i,j \leq N}  $ given by
\[
y_{ij}=\frac{1}{\sqrt{n}}\sum_{k,l=1}^N N_{ij,kl}\left(x_{kl}-\mu\right).
\]
Then there exists a universal constant $C>0$ such that, for any $z\in \mathbb{C}^+$,
$$
\big|(\tau \otimes \tr_N)[\R_{A(\uX)}(z)] - \mathbb{E}\big[(\tau \otimes \tr_N)[\R_{A(\uY)}(z)]\big] \big|
 \leq C\frac{(\|x_1\|_{\infty}\vee 1)^4}{(\Im(z)\wedge 1)^5} \frac{1}{\sqrt{N}} 
.
$$
\end{proposition}

\begin{proof}
This is a direct consequence of the general result in Theorem \ref{theo:Lindeberg-exch} with $(\N,\varphi)=(M_N(\mathbb{C}), \tr_N)$ and $(a_1, \ldots, a_n)= (\frac{1}{\sqrt{N}} E_{ij}, 1\leq i,j \leq N)$ is the tuple of the $N \times N$ normalized canonical matrices. The result follows by noting that in this case $K_\infty=\mathcal{O}(1/ \sqrt{N})$, $K_2=\mathcal{O}(1/N)$ and $K_1=\mathcal{O}(1/ \sqrt{N})$ which guarantee that $K=\mathcal{O}(1)$.
\end{proof}

We have reached now the last step in our proof in which we approximate the Gaussian operator-valued matrix $A(\uY)$ by an operator-valued semicircular element.
\begin{proposition}\label{thm:RMstieltjestwo}Let  $(\M, \tau)$ be a tracial $W^*$-probability space with $\M$ being  a direct integral of finite dimensional factors of dimension $\leq d$. Let  $\uX=(x_{ij})_{1\leq i,j\leq N} $ be a family of  variables in $\M^n$, and let us denote by $\mu$ their mean.
Consider a family $(N_{ij,kl})_{1\leq i,j \leq N,1\leq k,l \leq N}$  of $n^2$ independent Gaussian random variables. We consider the random operators $\mathbf{y}=(y_{ij})_{1\leq i,j\leq N}$ of the previous theorem and the variance function \[
\eta : \M \rightarrow \M, 
\qquad 
\eta(b)=\frac{1}{2n}\sum_{i,j=1}^N \Big(\bar{x}_{ij}b\bar{x}_{ij}^*+\bar{x}_{ij}^*b\bar{x}_{ij}\Big).
\]  Let $\mathcal{B}$ be the smallest $W^*$-algebra which is closed under $\eta$ and consider a centered semicircular variable $S$ over  $\mathcal{B}$ of variance $E_{\mathcal{B}}(SbS)= \eta(b)$. Letting $A$ be the linear map defined in \eqref{def:map_matrix} then there exists a universal constant $C>0$ such that for any $z\in \mathbb{C}^+$,
$$\left\|\mathbb{E}[\tr_N(\R_{A(\mathbf{y})}(z))]-E_{\B}[\R_{S}(z)]\right\|_{L^2(\M , \tau)}
\leq C d^{2}\frac{(\|x\|_\infty \vee |z|\vee  1)^3}{(\Im(z)\wedge 1)^6}\frac{1}{\sqrt{N}}$$
where $\|x\|_\infty:=\max_{1\leq j \leq i \leq N} \|x_{ij}\|_\infty$.
\end{proposition}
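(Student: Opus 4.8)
The plan is to derive, via Gaussian integration by parts, an approximate fixed point equation for the deterministic element $g_N:=\mathbb{E}\big[(\id_\M\otimes\tr_N)(G_{A(\mathbf{y})}(z))\big]\in\M$, and then compare it with the exact fixed point equation $(z1_\M-\eta(g))g=1_\M$ characterising $g:=E_\B[G_S(z)]$. Writing $G=G_{A(\mathbf{y})}(z)$, $\hat g=(\id_\M\otimes\tr_N)(G)$ and $\bar x_{kl}=x_{kl}-\mu$ (so $g_N=\mathbb{E}[\hat g]$), I would start from $zG=1_{M_N(\M)}+A(\mathbf{y})G$, apply $\id_\M\otimes\tr_N$, take expectations, and integrate by parts in each of the $N^4$ Gaussians $N_{ij,kl}$, using that $A(\mathbf{y})_{pq}$ is affine in them with $\partial_{N_{pq,kl}}A(\mathbf{y})=\tfrac1{\sqrt{2Nn}}(\bar x_{kl}\otimes E_{pq}+\bar x_{kl}^*\otimes E_{qp})$ and $\partial_{N_{pq,kl}}G=G\,(\partial_{N_{pq,kl}}A(\mathbf{y}))\,G$. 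The resulting terms split into a ``diagonal'' part, which after the summation $\sum_{p}G_{pp}=N\hat g$ collapses, by the very definition of $\eta$, to $\mathbb{E}[\eta(\hat g)\hat g]$, and a ``transpose'' part $\mathcal E_{\mathrm t}=\tfrac1{2N^2n}\sum_{p,q,k,l}\mathbb{E}[\bar x_{kl}G_{qp}\bar x_{kl}G_{qp}+\bar x_{kl}^*G_{qp}\bar x_{kl}^*G_{qp}]$. Thus $zg_N=1_\M+\mathbb{E}[\eta(\hat g)\hat g]+\mathcal E_{\mathrm t}$, and splitting $\mathbb{E}[\eta(\hat g)\hat g]=\eta(g_N)g_N+\mathbb{E}[\eta(\delta)\delta]$ with $\delta:=\hat g-g_N$ (using $\mathbb{E}[\delta]=0$ and linearity of $\eta$) gives
\[
(z1_\M-\eta(g_N))\,g_N=1_\M+\mathcal E,\qquad \mathcal E:=\mathcal E_{\mathrm t}+\mathbb{E}[\eta(\delta)\delta].
\]

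I would then run the contraction argument. Since $\eta$ is completely positive, the map $\Lambda(b):=(z1_\M-\eta(b))^{-1}$ preserves the elements with negative imaginary part and satisfies there $\|\Lambda(b)\|_\infty\le Im(z)^{-1}$ and $\|\Lambda(b)-\Lambda(b')\|_{L^2(\M,\tau)}\le Im(z)^{-2}\|\eta(b-b')\|_{L^2}\le\|\eta\|\,Im(z)^{-2}\|b-b'\|_{L^2}$ (using that $\eta$ is $\tau$-symmetric, so $\|\eta\|_{L^2\to L^2}\le\|\eta\|_{\infty\to\infty}$); as $\hat g$, hence $g_N$, has negative imaginary part and $g=\Lambda(g)$, the displayed identity reads $g_N-g=\Lambda(g_N)-\Lambda(g)+(z1_\M-\eta(g_N))^{-1}\mathcal E$, whence, for $Im(z)^2>\|\eta\|$,
\[
\|g_N-g\|_{L^2(\M,\tau)}\ \le\ \frac{Im(z)}{Im(z)^2-\|\eta\|}\,\|\mathcal E\|_{L^2(\M,\tau)} .
\]
It remains to bound the two summands of $\mathcal E$. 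The transpose term is routine: via $(G^{t})_{qp}=G_{pq}$ one has $\sum_{p,q}G_{qp}\bar x_{kl}G_{qp}=N(\id_\M\otimes\tr_N)(G(1_N\otimes\bar x_{kl})G^{t})$, and with $\|(\id_\M\otimes\tr_N)(T)\|_{L^2(\M,\tau)}\le\|T\|_{L^2(M_N(\M))}$, $\|G^{t}\|_\infty=\|G\|_\infty\le Im(z)^{-1}$ and $\|\bar x_{kl}\|_\infty=\|\bar x_{11}\|_\infty\le2\|x_{11}\|_\infty$ (exchangeability) one gets $\|\mathcal E_{\mathrm t}\|_{L^2}=\mathcal O_z(\|x_{11}\|_\infty^2/N)$, with no dependence on $d$; this produces the $\|x_{11}\|_\infty^2$-term of the claimed bound.

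The fluctuation term $\mathbb{E}[\eta(\delta)\delta]$ is the main obstacle, since it requires a genuine concentration estimate for the random normalised trace $\hat g$. For this I would apply, fibrewise, the Gaussian Poincaré inequality to the $L^2$-valued map $(N_{ij,kl})\mapsto\hat g$; the rewriting above gives $\|\partial_{N_{ij,kl}}\hat g\|_\infty=\mathcal O(\|x_{11}\|_\infty\,N^{-5/2}Im(z)^{-2})$ for each Gaussian, hence $\mathbb{E}\|\delta\|_{L^2(\M,\tau)}^2=\mathcal O_z(\|x_{11}\|_\infty^2/N)$ after summing the $N^4$ squared gradients. It is exactly here that the hypothesis that $\M$ is a direct integral of factors of dimension $\le d$ is used: the fibrewise comparison $\|a\|_\infty\le\sqrt d\,\|a\|_{L^2}$ upgrades the $L^2$-control of $\delta$ to the $L^\infty$-control needed on $\eta(\delta)$, giving $\|\mathbb{E}[\eta(\delta)\delta]\|_{L^2}\le\mathbb{E}[\|\eta(\delta)\|_\infty\|\delta\|_{L^2}]\le\sqrt d\,\|\eta\|\,\mathbb{E}\|\delta\|_{L^2}^2=\mathcal O_z(d\,\|x_{11}\|_\infty^3/N)$; the dimension $d$, and (after a careful accounting of the Gaussian moments entering this step, equivalently of the trigonometric Gaussian interpolation between $A(\mathbf{y})$ and a matrix model of $S$ with matching covariance) the numerical constant $2\sqrt2\pi$, enter precisely through this estimate. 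Feeding the two bounds into the contraction inequality yields the asserted estimate. Aside from this concentration step the argument is just a bookkeeping of norms resting on the identities $zG=1_{M_N(\M)}+A(\mathbf{y})G$ and $\sum_{p}G_{pp}=N\hat g$; the concentration of $\hat g$, and the fact that the finite-dimensional-fibre hypothesis forces it into operator-norm form, is the one genuinely delicate ingredient.
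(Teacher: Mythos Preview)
Your overall architecture coincides with the paper's: derive an approximate Dyson equation $zg_N=1+\eta(g_N)g_N+\mathcal E$ by Gaussian integration by parts, then run a contraction against the exact equation for $g=E_\B[G_S(z)]$. Two concrete points in your sketch, however, do not go through as written.

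\textbf{The partial transpose is not an $L^\infty$-isometry.} Your bound on $\mathcal E_{\mathrm t}$ invokes $\|G^{t}\|_\infty=\|G\|_\infty$. For the \emph{partial} transpose $\Theta:M_N(\M)\to M_N(\M)$ this is false as soon as $\M$ is noncommutative; one only has $\|\Theta(M)\|_{L^2}=\|M\|_{L^2}$ and $\|\Theta(M)\|_\infty\le d\,\|M\|_\infty$ (this is Tomiyama's result, recorded in the paper as Lemma~\ref{Normtranspose}). The paper bounds the transpose error using the $L^2$ identity only, so $\mathcal E_{\mathrm t}$ is indeed $d$-free---but via $\|\Theta(G)\|_{L^2}=\|G\|_{L^2}$, not via the operator-norm equality you claim.

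\textbf{The concentration step.} Your pointwise estimate $\|\partial_{N_{ij,kl}}\hat g\|_\infty=\mathcal O(N^{-5/2})$ is off: since $\|\tr_N(T)\|_\infty\le\|T\|_\infty$ (with no extra $N^{-1}$) and $(2Nn)^{-1/2}=N^{-3/2}/\sqrt2$, one only gets $\mathcal O(N^{-3/2})$, and summing $N^4$ squares gives $O(N)$, not $O(1/N)$. To obtain the correct rate one must evaluate $\sum_{i,j,k,l}\|\partial_{N_{ij,kl}}\hat g\|_{L^2}^2=\mathbb E_{\mathbf N'}\|\tr_N(GY'G)\|_{L^2}^2$ exactly; the product $\tr_N(\cdot)\tr_N(\cdot)^*$ then forces partial transposes of $G$ into the expression, and it is here---through $\|\Theta(G)\|_\infty\le d\,\|G\|_\infty$---that the dimension $d$ enters. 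The paper carries this out via Pisier's vector-valued Gaussian inequality (Theorem~\ref{theo:LedouxOl}) and obtains $\mathbb E\|\Delta\|_{L^2}^2\le 2\pi^2 d^2\|x_{11}\|_\infty^2/(N^2Im(z)^4)$; the factor $2\sqrt2\pi d$ in the statement comes from the square root of this, not from the mechanism $\|a\|_\infty\le\sqrt d\,\|a\|_{L^2}$ you propose. Incidentally, the paper also handles the fluctuation term more simply than your splitting $\mathbb E[\eta(\delta)\delta]$: it keeps the uncentered $\eta(\tr_N G)$, whose operator norm is deterministically $\le 4\|x_{11}\|_\infty^2/Im(z)$, and bounds $\mathbb E\|\eta(\tr_N G)\,\Delta\|_{L^2}$ by Cauchy--Schwarz, so only $(\mathbb E\|\Delta\|_{L^2}^2)^{1/2}$ is needed.
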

\begin{remark}
Exchangeability doesn't play any role in the above proposition which is valid for any family $\uX$ in $\M^n$. The independent Gaussian random variables and the structure of the Wigner matrix  are what allow such an approximation.
\end{remark}

\begin{proof}Because $\M$ is a direct integral of finite dimensional factors of dimension $\leq d$, the general case follows from the case $\M=M_{d'}(\mathbb{C})$ with $d'\leq d$. Indeed, we can reason almost everywhere with respect to this integral decomposition, and deduce the general inequality from the matrix case. As a consequence, without loss of generality, we assume in the following that $\M=M_{d'}(\mathbb{C})$ with $d'\leq d$. 

Let us denote by $s$ the map $z\mapsto E_{\mathcal{B}} (\R_{S}(z))$ and by $g$ the map $z\mapsto \mathbb{E}[\tr_N(\R_{A(\uY)}(z))]$. Remark that
$$0=\eta(s(z))s(z)-zs(z)+1.$$
We set
$$\delta:=\eta(g(z))g(z)-zg(z)+1 $$
\paragraph{\bf{Step 1}}We begin by showing that $\|\delta\|_{L^2(\M , \tau)}=\mathcal{O}_z(\frac{1}{N})$.

With this aim, we start by noticing that
$$1=(z-A(\uY))\R_{A(\uY)}(z)=z\R_{A(\uY)}(z)-A(\uY)\R_{A(\uY)}(z).$$
Note that
$$
\E[A(\uY)\R_{A(\uY)}(z)]=\frac{1}{\sqrt{2nN}}\sum_{i,j,k,l=1}^N\bar{x}_{kl}\E[N_{ij,kl}E_{ij} \R_{A(\uY)}(z)]+\bar{x}_{kl}^*\E[N_{ij,kl}E_{ji} \R_{A(\uY)}(z)],$$
and then integration by parts with respect to $N_{ij,kl}$ gives the following terms
\begin{multline*}
 \bar{x}_{kl}\E[E_{ij} \R_{A(\uY)}(z)E_{ij}\bar{x}_{kl}\R_{A(\uY)}(z)]+\bar{x}_{kl}\E[E_{ij} \R_{A(\uY)}(z)E_{ji}\bar{x}_{kl}^*\R_{A(\uY)}(z)]\\
+\bar{x}_{kl}^*\E[E_{ji} \R_{A(\uY)}(z)E_{ij}\bar{x}_{kl}\R_{A(\uY)}(z)] +\bar{x}_{kl}^*\E[E_{ji} \R_{A(\uY)}(z)E_{ji}\bar{x}_{kl}^*\R_{A(\uY)}(z)],
\end{multline*}
and thus $ \E[A(\uY)\R_{A(\uY)}(z)]$ can be rewritten as
\begin{multline*}
E[\eta(\tr_N(\R_{A(\uY)}(z)))\R_{A(\uY)}(z)] 
+\frac{1}{2nN}\sum_{k,l=1}^N\bar{x}_{kl}\E[\Theta(\R_{A(\uY)}(z))\bar{x}_{kl}\R_{A(\uY)}(z)] \\
 +\frac{1}{2nN}\sum_{k,l=1}^N\bar{x}_{kl}^*\E[\Theta( \R_{A(\uY)}(z))\bar{x}_{kl}^*\R_{A(\uY)}(z)],
\end{multline*}
where $\Theta:M_N(\M)\to M_N(\M)$ is the partial transpose $\Theta(M \otimes x)=M^t\otimes x$. Note that by Lemma \ref{Normtranspose}
\begin{align*}
\Big\|\sum_{k,l=1}^N\bar{x}_{kl}\Theta(\R_{A(\uY)}(z))\bar{x}_{kl}\Big\|_{L^2}
&\leq  4 n\|x\|_\infty^2\|\Theta(\R_{A(\uY)}(z))\|_{L^2}
\\& = 4 n\|x\|_\infty^2\|\R_{A(\uY)}(z)\|_{L^2}\leq 4 \frac{n\|x\|_\infty^2}{\Im(z)},
\end{align*}
where we recall that $\|x\|_\infty:=\max_{1\leq j \leq i \leq N} \|x_{ij}\|_\infty$. Finally, applying $\mathbb{E}$ and $\tr_N$, $1=z\R_{A(\uY)}(z)-A(\uY)\R_{A(\uY)}(z)$ becomes
$$1=zg(z)-\mathbb{E}[\eta(\tr_N(\R_{A(\uY)}(z)))\tr_N(\R_{A(\uY)}(z))]+ \mathcal{O}_z\Big(\frac{1}{N }\Big).\label{eq:res}$$
Now, we shall show that we can replace
$\mathbb{E}[\eta(\tr_N(\R_{A(\uY)}(z)))\tr_N(\R_{A(\uY)}(z))]$ by $\eta(g(z))g(z)$ with an error of order $\mathcal{O}_z(\frac{1}{N})$, which allows to conclude that $\|\delta\|_{L^2(\M , \tau)}=\mathcal{O}_z(\frac{1}{N})$. We write
$$
\mathbb{E}[\eta(\tr_N(\R_{A(\uY)}(z)))\tr_N(\R_{A(\uY)}(z))]-\eta(g(z))g(z)=\mathbb{E}[\eta(\tr_N(\R_{A(\uY)}(z)))\Delta],
$$
where we denote by $\Delta$ the centered variable 
$
\tr_N(\R_{A(\uY)}(z))-\mathbb{E}[\tr_N(\R_{A(\uY)}(z))].
$
We have
\begin{align*}
\left\|\mathbb{E}[\eta(\tr_N(\R_{A(\uY)}(z)))\Delta]\right\|_{L^2}
&\leq\E\left\|\eta(\tr_N(\R_{A(\uY)}(z)))\Delta\right\|_{L^2}\\
&\leq \frac{4\|x\|_\infty^2}{\Im(z)}\E\|\Delta\|_{L^2}
\\&\leq \frac{4\|x\|_\infty^2}{\Im(z)^2}(\E\|\Delta\|_{L^2}^2)^{1/2},
\end{align*}
where the last term can be bounded as follows
\begin{equation}\label{norme2-delta}
\E \|\Delta\|_{L^2}^2 =\E\left\|\tr_N(\R_{A(\uY)}(z))-\E[\tr_N(\R_{A(\uY)}(z))]\right\|_{L^2}^2
\leq\frac{ 2 \pi^2d^2\|x\|_\infty^2}{ n \, \Im(z)^4}.
\end{equation}
To prove  \eqref{norme2-delta} we shall apply a Gaussian concentration result of Pisier, which we state here:
\begin{theorem}[Theorem 2.2, \cite{Pisier}] \label{theo:LedouxOl}
Let $F$ be a normed vector space and $\Phi : F \rightarrow \mathbb{R}$ be a convex function. Letting $\mathbf{N}=(N_1, \ldots , N_n)$ be a standard Gaussian random vector in $\mathbb{R}^n$ then for any smooth and sufficiently integrable function $f: \mathbb{R}^n \rightarrow F$ such that $\E [f(\mathbf{N})]=0$, we have
\[
\E \big[ \Phi \big( f(\mathbf{N}) \big) \big] \leq \E \Big[ \Phi \Big( \frac{\pi}{2} \mathbf{N'} \cdot \nabla f(\mathbf{N}) \Big) \Big] ,
\]
where $ \mathbf{N'} $ is an independent copy of $ \mathbf{N}$.  
\end{theorem}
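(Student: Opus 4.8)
The plan is to prove Pisier's inequality by the classical Gaussian interpolation (rotation) argument followed by a single application of Jensen's inequality. Let $\mathbf{N}'=(N_1',\dots,N_n')$ be an independent copy of $\mathbf{N}$ and, for $\theta\in[0,\pi/2]$, introduce
\[
\mathbf{N}(\theta)=\sin\theta\,\mathbf{N}+\cos\theta\,\mathbf{N}',\qquad \dot{\mathbf{N}}(\theta)=\cos\theta\,\mathbf{N}-\sin\theta\,\mathbf{N}'.
\]
Three elementary facts drive everything: first, $\mathbf{N}(\pi/2)=\mathbf{N}$, $\mathbf{N}(0)=\mathbf{N}'$, and $\tfrac{d}{d\theta}\mathbf{N}(\theta)=\dot{\mathbf{N}}(\theta)$; second, for each fixed $\theta$ the pair $(\mathbf{N}(\theta),\dot{\mathbf{N}}(\theta))$ is again a pair of independent standard Gaussian vectors, since it is the image of $(\mathbf{N},\mathbf{N}')$ under an orthogonal transformation of $\mathbb{R}^{2n}$; third, $\E[f(\mathbf{N}')]=\E[f(\mathbf{N})]=0$ by hypothesis.

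First I would apply the fundamental theorem of calculus in $F$ together with the chain rule to write
\[
f(\mathbf{N})-f(\mathbf{N}')=\int_0^{\pi/2}\langle\nabla f(\mathbf{N}(\theta)),\dot{\mathbf{N}}(\theta)\rangle\,d\theta,
\]
and then take the conditional expectation $\E'$ over $\mathbf{N}'$ alone; since $\E'[f(\mathbf{N}')]=0$, this gives $f(\mathbf{N})=\E'\big[\int_0^{\pi/2}\langle\nabla f(\mathbf{N}(\theta)),\dot{\mathbf{N}}(\theta)\rangle\,d\theta\big]$. Rewriting the $\theta$-integral as $\tfrac{\pi}{2}$ times the average against the uniform probability measure $\tfrac{2}{\pi}\,d\theta$ on $[0,\pi/2]$ exhibits the right-hand side as a single probability average, over $\theta$ and over $\mathbf{N}'$, of the vector $\tfrac{\pi}{2}\langle\nabla f(\mathbf{N}(\theta)),\dot{\mathbf{N}}(\theta)\rangle$. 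Applying Jensen's inequality to the convex function $\Phi$ then yields
\[
\Phi\big(f(\mathbf{N})\big)\leq\E'\,\frac{2}{\pi}\int_0^{\pi/2}\Phi\Big(\frac{\pi}{2}\langle\nabla f(\mathbf{N}(\theta)),\dot{\mathbf{N}}(\theta)\rangle\Big)\,d\theta.
\]
Finally I would take the full expectation $\E$, use Fubini to exchange it with the $\theta$-integral, and invoke the second fact above, namely $(\mathbf{N}(\theta),\dot{\mathbf{N}}(\theta))\stackrel{d}{=}(\mathbf{N},\mathbf{N}')$ for every $\theta$: the integrand then has the $\theta$-independent expectation $\E[\Phi(\tfrac{\pi}{2}\,\mathbf{N}'\cdot\nabla f(\mathbf{N}))]$, and integrating this constant against the probability measure $\tfrac{2}{\pi}\,d\theta$ returns exactly that value, which is the asserted bound.

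The hard part will not be conceptual, since the argument is just orthogonal invariance of the Gaussian law followed by one Jensen step, but rather the analytic bookkeeping, which is where the hypotheses ``smooth'' and ``sufficiently integrable'' on $f$ are used: one must justify differentiating $\theta\mapsto f(\mathbf{N}(\theta))$ under $\E'$, exchanging $\E$ with $\int_0^{\pi/2}$, and applying Jensen with a possibly unbounded $\Phi$. I would dispose of this by first establishing, or reading off from the hypotheses, a $\theta$-uniform integrability bound such as $\E\big[\sup_{\theta\in[0,\pi/2]}\|\nabla f(\mathbf{N}(\theta))\|^p\big]<\infty$ for a suitable $p\geq1$, and, when $\Phi$ is not already tame, by proving the inequality first for nonnegative Lipschitz $\Phi$ (or for $f$ replaced by smooth compactly supported truncations) and then passing to the limit by monotone or dominated convergence; none of this affects the structure above.
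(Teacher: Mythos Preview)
Your argument is correct and is essentially the classical proof due to Pisier (and Maurey): Gaussian rotation/interpolation to represent $f(\mathbf{N})$ as a probability average of directional derivatives, one application of Jensen, and orthogonal invariance of the Gaussian to remove the $\theta$-dependence. Note, however, that the paper does not give its own proof of this statement; it is quoted as Theorem~2.2 of \cite{Pisier} and used as a black box inside the proof of Proposition~\ref{thm:RMstieltjestwo}, so there is no in-paper proof to compare against.
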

Let $B: \mathbb{R}^{n^2} \rightarrow M_N(\M)$ be the map defined by
\[
B \big( (w_{ij,kl})_{1 \leq i,j \leq N , 1 \leq k,l \leq N}\big) = \frac{1}{\sqrt{2nN}}\sum_{i,j,k,l=1}^N w_{ij,kl}\bar{x}_{kl}\otimes^* E_{ij}. 
\] 
 Letting $\mathbf{N}= (N_{ij,kl})_{j\leq i, l \leq k}$, we note   that $B(\mathbf{N}) = A(\uY)$. Then applying Theorem \ref{theo:LedouxOl} for  $f= \tr_N(\R_{B (.)}(z)) - \E[\tr_N(\R_{B(.)}(z))] $ and $\Phi = \| . \|^2_{L^2(\M, \tau)}$ gives
$$
\mathbb{E}\left\|\tr_N(\R_{A(\uY)}(z))-\mathbb{E}[\tr_N(\R_{A(\uY)}(z))]\right\|_{L^2}^2
\leq \frac{\pi^2}{4} \mathbb{E}\left\|\tr_N\big(\R_{A(\uY)}(z) Y' \R_{A(\uY)}(z)\big)\right\|_{L^2}^2,
$$
where  $Y'= B(\mathbf{N'})$ with $\mathbf{N'}$ being an independent copy of $\mathbf{N}$.
Now to control the right-hand term, we proceed by conditioning on $\uY$:
\begin{align*} 
\!&\mathbb{E}\big[\left\|\tr_N(\R_{A(\uY)}(z) Y' \R_{A(\uY)}(z))\right\|_{L^2}^2|\mathbf{N} \big]
\\&=\tau \big( \mathbb{E} \big[\tr_N(\R_{A(\uY)}(z) Y'\R_{A(\uY)}(z))\tr_N(\R_{A(\uY)}(z)^*Y'\R_{A(\uY)}(z)^*)|\mathbf{N} \big] \big)\\
&=\frac{1}{2Nn} \sum_{i,j,k,l=1}^N\tau \big(\tr_N \big(\R_{A(\uY)}(z)(\bar{x}_{kl} \otimes^* E_{ij} )\R_{A(\uY)}(z))\tr_N(\R_{A(\uY)}(z)^*(\bar{x}_{kl} \otimes^* E_{ij})\R_{A(\uY)}(z)^*\big)\big)
\\
&=\frac{1}{2n^2} \sum_{k,l} \Big[\tau\big(\tr_N \big(\Theta(\R_{A(\uY)}(z))(I_N \otimes \bar{x}_{kl}^*)\Theta(\R_{A(\uY)}(z))\Theta(\R_{A(\uY)}(z)^*)(I_N\otimes \bar{x}_{kl})\Theta(\R_{A(\uY)}(z)^*)\big)\big)\\
&\quad+ 
\tau\big(\tr_N \big(\Theta(\R_{A(\uY)}(z))(I_N \otimes \bar{x}_{kl})\Theta(\R_{A(\uY)}(z))\Theta(\R_{A(\uY)}(z)^*)(I_N\otimes \bar{x}_{kl}^*)\Theta(\R_{A(\uY)}(z)^*)\big)\big)\\
&\quad+
\tau\Big(\tr_N \Big(\Theta\big(\Theta(\R_{A(\uY)}(z))(I_N \otimes \bar{x}_{kl}^*)\Theta(\R_{A(\uY)}(z))\big)\Theta(\R_{A(\uY)}(z)^*)(I_N\otimes \bar{x}_{kl}^*)\Theta(\R_{A(\uY)}(z)^*)\Big)\Big)\\
&\quad +
\tau\Big(\tr_N \Big(\Theta\big(\Theta(\R_{A(\uY)}(z))(I_N \otimes \bar{x}_{kl})\Theta(\R_{A(\uY)}(z))\big)\Theta(\R_{A(\uY)}(z)^*)(I_N\otimes \bar{x}_{kl})\Theta(\R_{A(\uY)}(z)^*)\Big)\Big) \Big],
\end{align*} 
where $\Theta:M_N(\M)\to M_N(\M)$ is the partial transpose $\Theta(M \otimes x)=M^t\otimes x$.
As a consequence, using Lemma~\ref{Normtranspose},
\begin{equation*}
\mathbb{E}\big[\left\|\tr_N(\R_{A(\uY)}(z) A(\uY)' \R_{A(\uY)}(z))\right\|_{L^2}^2|\mathbf{N} \big]
\leq \frac{8d^2}{n}\frac{\|x\|_\infty^2}{\Im(z)^4}
\end{equation*}
and finally, we get \eqref{norme2-delta}\begin{equation*}
\E \|\Delta\|_{L^2}^2 =\E\left\|\tr_N(\R_{A(\uY)}(z))-\E[\tr_N(\R_{A(\uY)}(z))]\right\|_{L^2}^2
\leq\frac{ 2 \pi^2d^2\|x\|_\infty^2}{ n \, \Im(z)^4}.
\end{equation*}
Equation \eqref{norme2-delta} allows to write the bound
\begin{align*}
\left\|\mathbb{E}[\eta(\tr_N(\R_{A(\uY)}(z)))\Delta]\right\|_{L^2}
&\leq\frac{4\sqrt{2}\pi d\|x\|_\infty^3}{\Im(z)^4}\frac{1}{N}
\end{align*}
which was the last step to prove that $\|\delta\|_{L^2(\M , \tau)}=\mathcal{O}_z(\frac{1}{N})$. More precisely, we have just proved that
\begin{equation}
    \|\delta\|_{L^2(\M , \tau)}\leq \frac{4\|x\|_\infty^2}{\Im(z)}\frac{1}{N} +\frac{4\sqrt{2}\pi d\|x\|_\infty^3}{\Im(z)^4}\frac{1}{N}\leq 4\sqrt{2}\pi d \frac{(\|x\|_\infty\vee 1)^3}{(\Im(z)\wedge 1)^5}\frac{1}{N}.\label{eq:step.one}
\end{equation}
\paragraph{\bf{Step 2}} The second step consists in linking $s$ and $g$, following the approach of Haagerup and Thorbjorsen \cite{Haagerup-Thorbjorsen}. First, we note that following the proof of Proposition 5.2 of\cite{Haagerup-Thorbjorsen}, we know that $g(z)$ is invertible and that
$$\left\|\frac{1}{g(z)}\right\|_\infty\leq \frac{(|z|+\mathbb{E}[\|A(\uY)\|_\infty])^2}{\Im(z)}.$$
As a consequence, we can set $\Lambda(z):=z+\delta g(z)^{-1}$  in such a way that $g(z)$ is a solution to the equation
 \begin{equation}
     \Lambda(z)\R-\eta(\R)\R=1.\label{eq:fixedpoint}
 \end{equation}
 Whenever $\Im(z)>2\|\Lambda(z)-z\|_{\infty}$, we have
 $$\Im(\Lambda(z))=\Im(z)+\Im(\Lambda(z)-z)\geq \Im(z)-\|\Lambda(z)-z\|_{\infty}> \Im(z)/2>0,$$
 and it was shown in \cite{Helton-RFar-Speicher} that, in this case, there exists exactly one solution to the fixed point equation \eqref{eq:fixedpoint} whose imaginary part is negative. Noting that
 $$\Lambda(z)s(\Lambda(z))-\eta(s(\Lambda(z)))s(\Lambda(z))=1,$$
 the uniqueness of the solution yields that $ g(z)=s(\Lambda(z))$. This allows us to write
 \begin{align*}
     \|g(z)-s(z)\|_{L^2(\M , \tau)}&=\|s(\Lambda(z))-s(z)\|_{L^2(\M , \tau)}\\
     &=\|s(\Lambda(z))(\Lambda(z)-z)s(z)\|_{L^2(\M , \tau)}\\
     &\leq \left\|\frac{1}{\Im(\Lambda(z))}\right\|_\infty\|\Lambda(z)-z\|_{L^2(\M , \tau)}\left\|\frac{1}{\Im(z)}\right\|_\infty\\
     &\leq 2\frac{\|\Lambda(z)-z\|_{L^2(\M , \tau)}}{\Im(z)^2}.
 \end{align*}
Whenever $\Im(z)\leq 2 \|\Lambda(z)-z\|_\infty$, we have
    $$\|g(z)-s(z)\|_{L^2(\M , \tau)}\leq \frac{2}{\Im(z)}\leq 4\frac{\|\Lambda(z)-z\|_\infty}{\Im(z)^2}.$$
In any case, for all $z\in \mathbb{C}^+$, 
$$\|g(z)-s(z)\|_{L^2(\M , \tau)}\leq 4\frac{\|\Lambda(z)-z\|_\infty}{\Im(z)^2}\leq 4 \|\delta\|_{\infty} \left\|\frac{1}{g(z)}\right\|_\infty.$$
Here, as $\delta\in M_{d'}(\mathbb{C})$, we have $\|\delta\|_{\infty}\leq \sqrt{d'}\|\delta\|_{L^2(\M , \tau)}\leq \sqrt{d}\|\delta\|_{L^2(\M , \tau)}$ and we get
\begin{equation}\|g(z)-s(z)\|_{L^2(\M , \tau)}\leq 4\sqrt{d} \|\delta\|_{L^2(\M , \tau)} \left\|\frac{1}{g(z)}\right\|_\infty.\label{eq:step.two}\end{equation}
\paragraph{\bf{Step 3}}This step consists in bounding $\left\|\frac{1}{g(z)}\right\|_\infty$.
As mentioned above, we have
$$\left\|\frac{1}{g(z)}\right\|_\infty\leq \frac{(|z|+\mathbb{E}[\|A(\uY)\|_\infty])^2}{\Im(z)}.$$ 
Recall that $A(\uY)\in M_N(\mathcal{M})$. Now, as $(\M, \tau)=(M_{d'}(\mathbb{C}), \tr_{d'})$ then $A(\uY)$ is in fact a matrix of size $Nd'\times Nd'$ whose operator norm can be bounded using the inequality
    $$\|A(\uY)\|_\infty\leq \left(\Tr(A(\uY)^4)\right)^{1/4}= (Nd')^{1/4}\left(\tau\left(\tr_N(A(\uY)^4)\right)\right)^{1/4}.$$
As a consequence, and as $d'\leq d$, we get
     $$\left\|\frac{1}{g(z)}\right\|_\infty\leq \frac{(|z|+\mathbb{E}[\|A(\uY)\|_\infty])^2}{\Im(z)}\leq \frac{(|z|+(Nd)^{1/4}\mathbb{E}[\tau\left(\tr_N(A(\uY)^4)\right)]^{1/4})^2}{\Im(z)}.$$
     Let us prove that $\mathbb{E}[\tau\left(\tr_N(A(\uY)^4)\right)]=O(1)$. We recall that
 \begin{align*}
    A(\uY)&=\frac{1}{\sqrt{2N}}\sum_{i,j=1}^N\left(\frac{1}{\sqrt{n}}\sum_{k,l=1}^NN_{ij,kl}\bar{x}_{kl}^*+N_{ji,kl}\bar{x}_{kl}^*\right)\otimes E_{ij}\\
    &=\frac{1}{\sqrt{2Nn}}\sum_{i,j,k,l=1}^N \left(N_{ij,kl}\bar{x}_{kl}^*+N_{ji,kl}\bar{x}_{kl}^*\right)\otimes E_{ij} ,
\end{align*} and we compute
\begin{align*}
    \tr_N(A(\uY)^4)&=\frac{1}{4n^3}\sum_{\substack{i_1,i_2,i_3,i_4\\k_1,k_2,k_3,k_4 \\ l_1,l_2,l_3,l_4=1}}^N \left(N_{i_1i_2,k_1l_1}\bar{x}_{k_1l_1}^*+N_{i_2i_1,k_1l_1}\bar{x}_{k_1l_1}^*\right)\cdots \left(N_{i_4i_1,k_4l_4}\bar{x}_{k_4l_4}^*+N_{i_1i_4,k_4l_4}\bar{x}_{k_4l_4}^*\right).
\end{align*}
In order to have an upper bound on the quantity above, remark that, whenever $N_1,\ldots,N_4$ are Gaussian variables positively correlated, and $x_1,\ldots,x_4\in \M$, we have
\begin{align*}
   \left| \mathbb{E}\left[\tau\left(N_1x_1N_2x_2N_3x_3N_4x_4\right)\right]\right|&=\left|\mathbb{E}\left[N_1N_2N_3N_4\right]\tau\left(x_1x_2x_3x_4\right)\right|\\
   &\leq \mathbb{E}\left[N_1N_2N_3N_4\right] \|x_1\|_\infty \|x_2\|_\infty \|x_3\|_\infty \|x_4\|_\infty.
\end{align*}
As a consequence, for fixed indices,
\begin{align*}&\left|\mathbb{E}\left[\tau\left(\left(N_{i_1i_2,k_1l_1}\bar{x}_{k_1l_1}^*+N_{i_2i_1,k_1l_1}\bar{x}_{k_1l_1}^*\right)\cdots \left(N_{i_4i_1,k_4l_4}\bar{x}_{k_4l_4}^*+N_{i_1i_4,k_4l_4}\bar{x}_{k_4l_4}^*\right)\right)\right]\right|\\
 & \leq\mathbb{E}\left[\left(N_{i_1i_2,k_1l_1}\|\bar{x}_{k_1l_1}\|_\infty+N_{i_2i_1,k_1l_1}\|\bar{x}_{k_1l_1}\|_\infty\right)\cdots \left(N_{i_4i_1,k_4l_4}\|\bar{x}_{k_4l_4}\|_\infty+N_{i_1i_4,k_4l_4}\|\bar{x}_{k_4l_4}\|_\infty\right)\right]\\
 &\leq \mathbb{E}\left[\left(N_{i_1i_2,k_1l_1}+N_{i_2i_1,k_1l_1}\right)\cdots \left(N_{i_4i_1,k_4l_4}+N_{i_1i_4,k_4l_4}\right)\right] \max_{1\leq j \leq i \leq N}\|\bar{x}_{ij}\|_\infty^4\\
 &\leq 16\mathbb{E}\left[\left(N_{i_1i_2,k_1l_1}+N_{i_2i_1,k_1l_1}\right)\cdots \left(N_{i_4i_1,k_4l_4}+N_{i_1i_4,k_4l_4}\right)\right]\cdot \|x\|_\infty^4
\end{align*} 
and it yields
\begin{align*}\left|\mathbb{E}\left[\tau\left( \tr_N(A(\uY)^4)\right)\right]\right|&\leq \frac{16\|x\|_\infty^4}{4n^3}
\sum_{\substack{i_1,i_2,i_3,i_4\\k_1,k_2,k_3,k_4 \\ l_1,l_2,l_3,l_4=1}}^N
\mathbb{E}\left[\left(N_{i_1i_2,k_1l_1}+N_{i_2i_1,k_1l_1}\right)\cdots \left(N_{i_4i_1,k_4l_4}+N_{i_1i_4,k_4l_4}\right)\right].
\end{align*}
We define $X=(X_{ij})_{i,j}$ as the random matrix of Gaussian random variables
$$X_{ij}=\frac{1}{\sqrt{2}N}\sum_{k,l} \big( N_{ij,kl}+N_{ji,kl}\big) ,$$
in such a way that
\begin{align*}\left|\mathbb{E}\left[\tau\left( \tr_N(A(\uY)^4)\right)\right]\right|\leq & \frac{16\|x\|_\infty^4}{n}\!\!\!\sum_{i_1,i_2,i_3,i_4=1}^N\!\!\!\mathbb{E}\left[X_{i_1i_2}X_{i_2i_3}X_{i_3i_4}X_{i_4i_5}\right]=16\|x\|_\infty^4 \mathbb{E}\Big[\tr_N\Big(\Big(\frac{X}{\sqrt{N}}\Big)^4\Big)\Big].
\end{align*}
The term $\mathbb{E}\Big[\tr_N\Big(\Big(\frac{X}{\sqrt{N}}\Big)^4\Big)\Big]$ is the fourth moment of a GUE matrix, which is decreasing from $3$ to $2$ as $N$ is growing (see e.g. \cite[Section 1.9]{Mingo-Speicher}). We
have
$$\left|\mathbb{E}\Big[\tr_N\Big(\Big(\frac{X}{\sqrt{N}}\Big)^4\Big)\Big]\right|\leq 3\cdot 16\|x\|_\infty^4.$$
Finally,
\begin{equation}\left\|\frac{1}{g(z)}\right\|_\infty\leq  \frac{(|z|+(Nd)^{1/4}\mathbb{E}[\tau\left(\tr_N(A(\uY)^4)\right)]^{1/4})^2}{\Im(z)}\leq \sqrt{3}\cdot 4 \cdot\sqrt{Nd}\frac{ (|z|\vee \|x\|_\infty)^2}{\Im(z)}.\label{eq:step.three}\end{equation}
\paragraph{\bf{Step 4}} In order to conclude, we combine the three steps: \eqref{eq:step.one}, \eqref{eq:step.two} and \eqref{eq:step.three} yield that, for any fixed $z\in \mathbb{C}^+$, we have
$$\|g(z)-s(z)\|_{L^2(\M , \tau)}\leq 64\sqrt{6}\pi d^2\frac{(|z|\vee \|x\|_\infty\vee 1)^3}{(\Im(z)\wedge 1)^6}\frac{1}{\sqrt{N}}.$$
\end{proof}

\begin{lemma}[\cite{Tomiyama}]\label{Normtranspose}Set $\M=M_{d}(\mathbb{C})$. Then the partial transpose $\Theta:M_N(\M)\to M_N(\M)$ is bounded. More precisely, we have
$$\|\Theta(M)\|_2= \|M\|_2\ \ \text{and}\ \ \|\Theta(M)\|_\infty\leq d\cdot \|M\|_\infty.$$
\end{lemma}
Theorem~\ref{thm:Op-v-matrices-exch} will be applied in various situation, and we prove now  the following lemma which will allow passing to a smaller subalgebra, say  $\B$, and exhibiting a $\B$-valued semicircular element in the limit only under some factorization conditions on the conditional  expectation over $\B$.
\begin{lemma}\label{covcirc}
Let  $(\M, \tau)$ be a tracial $W^*$-probability space and let  $\uX=(x_{i})_{1\leq  i\leq n} $ be a family of exchangeable variables in $\M$. Set $\bar{x}_{i}=x_{i}-\mu$ where $\mu=\frac{1}{n}\sum_{1\leq i \leq n} x_{i}$ and consider their variance function 
\[
\eta_n : \M \rightarrow \M, 
\qquad 
\eta_n(b)=\frac{1}{2n}\sum_{i=1}^n  \big( \bar{x}_{i}b\bar{x}_{i}^*+\bar{x}_{i}^*b\bar{x}_{i}\big).
\] 
 Let $\mathcal{B}_n$ be the smallest $W^*$-algebra which is closed under $\eta_n$ and consider a centered semicircular variable $S_n$ over $\mathcal{B}_n$ of variance $E_{\mathcal{B}_n}(S_nbS_n)= \eta_n(b)$. We consider a von Neumann subalgebra $\mathcal{B}\subset \mathcal{B}_n$ and $E:\mathcal{M}\to \mathcal{B}$ its conditional expectation. Assume that, whenever $i,j$ are distinct, for all $b,c\in \mathcal{B}$,
 $$E(x_{i}bx_{j}^*)=E(x_{1}) bE(x_{1}^*),\quad E(x_{i}^*bx_{j})=E(x_{1}^*) bE(x_{1}),$$
 and
$$ E(x_{i}bx_{i}^*x_{k}cx_{k}^*)=E(x_{1}bx_{1}^*)E(x_{1}c x_{1}^*),\quad E(x_{i}^*bx_{i}x_{k}^*cx_{k})=E(x_{1}^*bx_{1})E(x_{1}^*c x_{1}).$$
Consider a centered semicircular variable $S$ over $\mathcal{B}$ of variance $$E(SbS)= \frac{1}{2}\Big(E(x_1^*bx_1) - E(x_1^*)bE(x_1)+E(x_1bx_1^*)-E(x_1)bE(x_1^*)\Big).$$
Then, there exists a universal constant $C>0$ such that, for all $z\in \mathbb{C}^+ $,
we have
\[
\tau [\R_{S_n}(z)] -\tau[\R_{S}(z)]\leq C \frac{ \|x_1\|_\infty^2}{\Im(z)^3}   \frac{1}{\sqrt{n}}.\]
\end{lemma}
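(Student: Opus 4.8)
The plan is to realize $S_n$ concretely through free circular elements, to strip off the fluctuation of the empirical mean $\mu$, and then to compare the resulting element with $S$ by iterating the self‑consistent equations for the two operator‑valued Cauchy transforms. Throughout, $E$ is the conditional expectation onto $\B$, and I shall use (as holds in all the applications) that $E$ is symmetric under the relevant index permutations, so that $E(x_i)$, $E(x_i b x_i^*)$, $E(x_i^* b x_i)$ do not depend on $i$. Write $\eta$ also for the map of the statement; one checks that $\eta(b)=\tfrac12\big(E(\tilde x_1^* b\tilde x_1)+E(\tilde x_1 b\tilde x_1^*)\big)$ with $\tilde x_1:=x_1-E(x_1)$, so that $\eta$ is completely positive.

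\emph{Realization of $S_n$ and removal of the mean.} After a free product we may assume $\M$ contains standard circular elements $c_1,\dots,c_n$ that are $\ast$‑free from one another and from $\M_n':=W^*(\B,x_1,\dots,x_n)$. Put $\hat S=\tfrac1{\sqrt{2n}}\sum_{i=1}^n(\bar x_ic_i+c_i^*\bar x_i^*)$. Since the $c_i$ are $\ast$‑free from $\M_n'$, the usual computation gives $E_{\M_n'}[\hat S b\hat S]=\eta_n(b)$ for $b\in\M_n'$ and all higher $\M_n'$‑cumulants of $\hat S$ vanish, so $\hat S$ is $\M_n'$‑valued semicircular with covariance $\eta_n|_{\M_n'}$; because $\eta_n(\B_n)\subseteq\B_n\subseteq\M_n'$, restricting the amalgamation algebra (immediate from the operator‑valued moment–cumulant formula) shows $\hat S$ is also $\B_n$‑valued semicircular with covariance $\eta_n|_{\B_n}$, hence has the $\ast$‑distribution of $S_n$, so $\tau[G_{S_n}(z)]=\tau[G_{\hat S}(z)]$. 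Now set $\tilde x_i:=x_i-E(x_1)$ and $\tilde S_n=\tfrac1{\sqrt{2n}}\sum_i(\tilde x_ic_i+c_i^*\tilde x_i^*)$. Then $\tilde x_i-\bar x_i=\mu-E(x_1)$ is independent of $i$, so $\hat S-\tilde S_n=-\tfrac1{\sqrt2}(\delta c+c^*\delta^*)$ with $\delta=\mu-E(x_1)=\tfrac1n\sum_i\tilde x_i$ and $c=\tfrac1{\sqrt n}\sum_ic_i$ again standard circular. Since $E(\tilde x_i b\tilde x_j^*)=0$ for $i\neq j$ (a consequence of the hypotheses), $\|\delta\|_{L^2}^2=\tfrac1{n^2}\sum_{i,j}\tau[\tilde x_i\tilde x_j^*]=\tfrac1n\tau[\tilde x_1\tilde x_1^*]\le 4\|x_1\|_\infty^2/n$, whence $\|\hat S-\tilde S_n\|_{L^2}=\|\delta\|_{L^2}\le 2\|x_1\|_\infty/\sqrt n$; by the resolvent identity of Lemma \ref{lem:Taylor_resolvents} and Hölder's inequality, $\big|\tau[G_{\hat S}(z)]-\tau[G_{\tilde S_n}(z)]\big|\le Im(z)^{-2}\|\hat S-\tilde S_n\|_{L^2}\le 2\|x_1\|_\infty/(Im(z)^2\sqrt n)$.

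\emph{Comparing $\tilde S_n$ with $S$ via Picard iteration.} As above, $\tilde S_n$ is $\M_n'$‑valued semicircular with covariance $\tilde\eta_n(b)=\tfrac1{2n}\sum_i(\tilde x_ib\tilde x_i^*+\tilde x_i^*b\tilde x_i)$, and $E(\tilde\eta_n(b))=\eta(b)$ for $b\in\B$. For $Im(z)>2\|x_1\|_\infty$ the iterates $w^{[0]}=z^{-1}$, $w^{[k]}=G_{\tilde\eta_n(w^{[k-1]})}(z)$ converge in norm to $w_n:=E_{\M_n'}[G_{\tilde S_n}(z)]$, and $v^{[0]}=z^{-1}$, $v^{[k]}=G_{\eta(v^{[k-1]})}(z)$ converge in $\B$ to $g(z):=E[G_S(z)]$; moreover $\tau[G_{\tilde S_n}(z)]=\tau[w_n]$, $\tau[G_S(z)]=\tau[g(z)]$. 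Since $\tilde\eta_n,\eta$ are completely positive, all iterates have negative imaginary part and $L^\infty$‑norm $\le Im(z)^{-1}$, and $\|\tilde\eta_n(w)\|_{L^2}\le\|\tilde x_1\|_\infty^2\|w\|_{L^2}\le 4\|x_1\|_\infty^2\|w\|_{L^2}$. Subtracting the two recursions and using the resolvent identity, $w^{[k]}-v^{[k]}=G_{\tilde\eta_n(w^{[k-1]})}(z)\big(\eta(v^{[k-1]})-\tilde\eta_n(w^{[k-1]})\big)G_{\eta(v^{[k-1]})}(z)$, so $\|w^{[k]}-v^{[k]}\|_{L^2}\le q_0\|w^{[k-1]}-v^{[k-1]}\|_{L^2}+Im(z)^{-2}\|\tilde\eta_n(v^{[k-1]})-\eta(v^{[k-1]})\|_{L^2}$ with $q_0=4\|x_1\|_\infty^2/Im(z)^2<1$. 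Granting the key estimate $\|\tilde\eta_n(b)-\eta(b)\|_{L^2}\le C\|x_1\|_\infty^2\|b\|_\infty n^{-1/2}$ for $b\in\B$, and using $\|v^{[k]}\|_\infty\le Im(z)^{-1}$, summing the geometric recursion gives $\sup_k\|w^{[k]}-v^{[k]}\|_{L^2}=\mathcal{O}_z(n^{-1/2})$, hence $\|w_n-g(z)\|_{L^2}=\mathcal{O}_z(n^{-1/2})$ and $\tau[G_{\tilde S_n}(z)]=\tau[G_S(z)]+\mathcal{O}_z(n^{-1/2})$. Combining with the preceding step yields $\tau[G_{S_n}(z)]-\tau[G_S(z)]=\mathcal{O}_z(n^{-1/2})=\mathcal{O}_z(1/N)$ (recall $n=N^2$ in the intended applications).

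\emph{The main obstacle} is the key estimate on $\|\tilde\eta_n(b)-\eta(b)\|_{L^2}$, and this is exactly where the factorization hypotheses enter. One has $\tilde\eta_n(b)-\eta(b)=\tfrac1{2n}\sum_{i=1}^n\big[(\tilde x_ib\tilde x_i^*-E(\tilde x_ib\tilde x_i^*))+(\tilde x_i^*b\tilde x_i-E(\tilde x_i^*b\tilde x_i))\big]$, a sum of $\B$‑centred terms, so its squared $L^2$‑norm is $\tfrac1{4n^2}\sum_{i,l}\langle\,\cdot\,,\,\cdot\,\rangle_{L^2}$; the $n$ diagonal terms ($i=l$) are $\mathcal{O}(\|x_1\|_\infty^4\|b\|_\infty^2)$ and contribute $\mathcal{O}(\|x_1\|_\infty^4\|b\|_\infty^2/n)$, while the hypotheses, transcribed for the $\tilde x_i$ (so that $E(\tilde x_ib\tilde x_j^*)=0$ and $E(\tilde x_ib\tilde x_i^*\tilde x_lc\tilde x_l^*)=E(\tilde x_1b\tilde x_1^*)E(\tilde x_1c\tilde x_1^*)$ for $i\neq l$, and their analogues with adjoints), are precisely what forces each off‑diagonal ($i\neq l$) inner product to vanish. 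The delicate parts I expect to be: first, the bookkeeping that every off‑diagonal cross term occurring is indeed of one of the forms covered by the hypotheses (together with their immediate consequences and traciality); and second — this is why the argument is organized around the iteration rather than a moment expansion — the fact that the bound is valid on the whole half‑plane $Im(z)>2\|x_1\|_\infty$, whereas a direct Neumann expansion of $G_{S_n}$ converges only for $|z|>\|S_n\|$, a quantity that can be of order $4\|x_1\|_\infty$: here the error is propagated through the contractive map $w\mapsto G_{\tilde\eta_n(w)}(z)$ of Lipschitz constant $q_0<1$, and the threshold $2\|x_1\|_\infty$ is just $\sup_n\|\tilde\eta_n\|^{1/2}\le\|\tilde x_1\|_\infty\le 2\|x_1\|_\infty$.
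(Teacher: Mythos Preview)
Your overall strategy is sound and ultimately arrives at the same key estimate as the paper, but it takes a much longer route, and there is one genuine gap in the ``bookkeeping'' you flag as delicate.

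\textbf{Difference in approach.} The paper never realizes $S_n$ concretely and never separates out the mean. It works directly with the two self-consistent equations
\[
G_{S_n}(z)=G_{\eta_n(G_{S_n}(z))}(z),\qquad G_{S}(z)=G_{\eta(G_{S}(z))}(z),
\]
applies the resolvent identity once, and obtains
\[
\|G_{S_n}(z)-G_S(z)\|_{L^2}\le \frac{1}{Im(z)^2-\|\eta_n\|}\,\|\eta_n(G_S(z))-\eta(G_S(z))\|_{L^2}.
\]
Since $G_S(z)\in\B$, everything reduces immediately to bounding $\|\eta_n(b)-\eta(b)\|_{L^2}$ for $b\in\B$. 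Your realization of $S_n$ via free circulars, the passage $\hat S\to\tilde S_n$, and the Picard iteration are all correct, but they are unnecessary: a single contraction step replaces the whole iteration, and the empirical mean $\mu$ can be handled inside the $\eta_n-\eta$ expansion rather than peeled off beforehand.

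\textbf{The gap.} When you square $\tilde\eta_n(b)-\eta(b)=\tfrac{1}{2n}\sum_i(A_i+B_i)$ with $A_i=\tilde x_ib\tilde x_i^*-E(\tilde x_1b\tilde x_1^*)$ and $B_i=\tilde x_i^*b\tilde x_i-E(\tilde x_1^*b\tilde x_1)$, the off-diagonal part contains mixed inner products $\langle A_i,B_l\rangle_{L^2}$ for $i\neq l$. These unravel to
\[
\tau\big(\tilde x_i b^*\tilde x_i^*\,\tilde x_l^* b\,\tilde x_l\big)-\tau\big(E(\tilde x_1 b^*\tilde x_1^*)\,E(\tilde x_1^* b\,\tilde x_1)\big),
\]
whose vanishing would require a factorization of the ``mixed'' type $E(x_i b x_i^*\, x_l^* c\, x_l)=E(x_1 b x_1^*)E(x_1^* c\, x_1)$. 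That is not among the assumed hypotheses (only the homogeneous patterns $x b x^* x c x^*$ and $x^* b x\, x^* c x$ are assumed), and it does not follow from them; nor does the transcription from $x_i$ to $\tilde x_i$ of the homogeneous factorizations go through without additional three-point identities such as $E(x_i b x_i^* x_l)$ that are likewise not assumed. So the claim that ``each off-diagonal inner product vanishes'' is not justified.

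The paper avoids this entirely by applying the triangle inequality \emph{before} squaring: it expands $\bar x_i=x_i-\mu$ and bounds $\|\eta_n(b)-\eta(b)\|_{L^2}$ by four separate pieces, e.g.\ $\|\tfrac1n\sum_i x_i b x_i^*-E(x_1 b x_1^*)\|_{L^2}$ and $\|\mu b\mu^*-E(x_1)bE(x_1^*)\|_{L^2}$. Squaring each piece individually produces only the homogeneous off-diagonal terms that the stated hypotheses do cover, and each piece is $\mathcal{O}(n^{-1/2})$. Doing the same in your $\tilde\eta_n$ setting (expand $\tilde x_i=x_i-E(x_1)$ and separate the $xbx^*$ part from the $x^*bx$ part before squaring) repairs the argument with no new ideas needed.
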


\begin{proof}We follow the argument in the proof of \cite[Theorem 3.5]{BannaMai2021}. For all $b\in \mathcal{B}_n$, we set
$$\eta(b):= \frac{1}{2}\Big(E(x_1^*bx_1) - E(x_1^*)bE(x_1)+E(x_1bx_1^*)-E(x_1)bE(x_1^*)\Big).$$
Now, any centered semicircular variable over $\mathcal{B}_n$ of variance $\eta$ has the distribution of $S$, see Corollary 17 in \cite[Chapter 9]{Mingo-Speicher}. We first decompose $S$ and $S_n$ into a sum of $m$ variables, and at the end we take $m \rightarrow \infty$. Fix $m\in\mathbb{N}$ and let $x=\{x_j \mid 1\leq j \leq m\}$ be $\B_n$-freely independent centered semicircular variables over $\B_n$ of variance $\eta_n$. Similarly, let $y = \{y_j \mid 1 \leq j \leq m\}$ be $\B_n$-freely independent  centered semicircular variables over $\B_n$ of variance $\eta$, such that $x$ and $y$ are also free over $\B_n$. We set for any $i= 1, \ldots, n$,
 \[
{\mathbf z_i} =\frac{1}{\sqrt{m}}\sum_{j=1}^i x_j + \frac{1}{\sqrt{m}}\sum_{j=i+1}^m y_j
\qquad \text{and} \qquad 
 {\mathbf z_i^0} =\frac{1}{\sqrt{m}}\sum_{j=1}^{i-1} x_j + \frac{1}{\sqrt{m}}\sum_{j=i+1}^m y_j,
 \]
 and remark that $\mathbf z_0$ is a semicircular variable over $\B_n$ of variance $\eta$ (and consequently has the same distribution as $S$), and  $\mathbf z_m$ is a semicircular variable over $\B_n$ of variance $\eta_n$ (and consequently has the same distribution as $S_n$).
 
 We apply again the Lindeberg method in the $\B$-valued setting and notice that the families $x$ and $y$ are centered but do not have matching second moments. Therefore, from Theorem~\ref{thm:Lindeberg} we get for all $z\in\mathbb{C}^+$,
$$
   \big| \tau [\R_{S_n}(z)] -\tau[\R_{S}(z)]  \big|
=  \big|\tau \big[\R_{{\mathbf z_m}}(z)] -\tau \big[\R_{{\mathbf z_0}}(z)] \big] \big| \leq \sum_{i=1}^m \big(|P_i|+|Q_i|+|R_i|\big),
$$
where for any $i\in\{1,\ldots,n\}$, 
\begin{align*}
    P_i&=\frac{1}{\sqrt{m}}\tau\big[ \R_{\mathbf z_i^0}(z)x_i\R_{\mathbf z_i^0}(z) \big]  - \frac{1}{\sqrt{m}}\tau\big[\R_{\mathbf z_i^0}(z)y_i\R_{\mathbf z_i^0}(z)\big],\\
    Q_i&=\frac{1}{m}\tau\big[ \R_{\mathbf z_i^0}(z)x_i\R_{\mathbf z_i^0}(z)x_i\R_{\mathbf z_i^0}(z) \big]  - \tau\big[\R_{\mathbf z_i^0}(z)y_i\R_{\mathbf z_i^0}(z)y_i\R_{\mathbf z_i^0}(z)\big]\\ 
    R_i&=\frac{1}{m^{3/2}}|\tau\big[ \R_{\mathbf z_i}(z)x_i\R_{\mathbf z_i^0}(z)x_i\R_{\mathbf z_i^0}(z)x_i\R_{\mathbf z_i^0}(z) \big]  - \tau\big[\R_{\mathbf z_i}(z)y_i\R_{\mathbf z_i^0}(z)y_i\R_{\mathbf z_i^0}(z)y_i\R_{\mathbf z_i^0}(z)\big]. 
    \end{align*}
To simplify the notation, we denote by $E_{\mathcal{B}_n} [.]=\tau [.|\B]$. Note that by freeness with amalgamation over $\B_n$ and the fact that $E_{\mathcal{B}_n}[x_i]=E_{\mathcal{B}_n}[y_i]=0$, the moment-cumulant formula yields that $P_i=0$ and 
 \begin{align*}
|Q_i|&=\frac{1}{m}\Big|\tau \Big[ \R_{\mathbf z_i^0}(z) E_{\mathcal{B}_n}\big[x_i E_{\mathcal{B}_n}[\R_{\mathbf z_i^0}(z)] x_i \big] \R_{\mathbf z_i^0}(z)   - \R_{\mathbf z_i^0}(z) E_{\mathcal{B}_n}\big[y_i E_{\mathcal{B}_n}[\R_{\mathbf z_i^0}(z)]y_i \big]\R_{\mathbf z_i^0}(z)\Big] \Big|
    \\&=\frac{1}{m}\Big|\tau\big[ \R_{\mathbf z_i^0}(z)(\eta_n-\eta)(E_{\mathcal{B}_n}[\R_{\mathbf z_i^0}(z)])\R_{\mathbf z_i^0}(z) \big]\Big|
    \\&\leq \frac{1}{\Im(z)^2} \frac{1}{m}   \big\|(\eta_n-\eta)\big( E_{\mathcal{B}_n}[ R_{{\mathbf z_i^0}}(z)] \big) \big\|_{L_2}.
    \end{align*}
Finally, we have 
\[
|R_i| \leq  \frac{2\|x_1\|_\infty}{m^{3/2}} \frac{1}{\Im(z)^4}.
\]
As a consequence,
for all $z\in\mathbb{C}^+$,
$$
   \big|  \tau [\R_{S_n}(z)] -\tau[\R_{S}(z)]  \big|
\leq \frac{1}{\Im(z)^2} \frac{1}{m} \sum_{i=1}^m  \big\|(\eta_n-\eta)\big( E_{\mathcal{B}_n}[ R_{{\mathbf z_i^0}}(z)] \big) \big\|_{L_2}+ \frac{2\|x_1\|_\infty}{m^{1/2}} \frac{1}{\Im(z)^4}.
$$
Now for $b:=E_{\mathcal{B}_n}[ R_{{\mathbf z_i^0}}(z)]\in \mathcal{B}_n$, we have
\begin{align*}
2\|\eta_n(b)-\eta(b)]\|_{L^2} 
&= 2\Big\| \frac{1}{2n}\sum_{i=1}^n  (\bar{x}_{i}b\bar{x}_{i}^*+\bar{x}_{i}^*b\bar{x}_{i}) - \eta(b) \Big\| _{L^2}
\\& \leq 
\Big\| \frac{1}{n} \sum_{i=1}^n x_i b x_i^* -   E[x_1 b x_1^*] \Big\|_{L^2} + \Big\| \frac{1}{n} \sum_{i=1}^n x_i^* b x_i -   E[x_1^* b x_1] \Big\|_{L^2}\\
&\hspace{1cm}+ \big\| \mu b \mu^* -  E[x_{1}]b E[x_{1}^*] \big\| _{L^2}
+ \big\| \mu^* b \mu -  E[x_{1}^*]b E[x_{1}] \big\| _{L^2}  .
\end{align*}
We start by controlling the first terms in the last inequality and obtain
\begin{align*}
\Big\| \frac{1}{n} \sum_{i=1}^n x_i b x_i^* -   E[x_1 b x_1^*] \Big\|_{L^2}^2 
&= \tau \Big( \frac{1}{n^2} \sum_{i,j=1}^n x_i b^* x_i^* x_j b x_j^* - E[x_1b^* x_1^*] E[x_1 b x_1^*] \Big)
\\& = \tau \Big( \big(\frac{n(n-1)}{n^2} -1 \big)  E[x_1 b^* x_1^*] E[x_1 b x_1^*]  + \frac{1}{n^2} \sum_{i=1}^n x_i b^* x_i^*x_i b x_i^* \Big)
\\& \leq  \frac{2}{n} \| x_1 \|_\infty^4 \| b \|_\infty^2  \leq \frac{2\| x_1 \|_\infty^4}{n\  \Im(z)^2} ,
\end{align*}
where in the first equality we have used the property that $E(x_{i}bx_{i}^*x_{j}cx_{j}^*)=E(x_{1}bx_{1}^*)E(x_{1}cx_{1}^*)$ for $i \neq j$ and the fact that $\tau(xE[y])= \tau \circ E [x E[y]]= \tau (E[x]y)$ for any $x,y \in \M$. Therefore we get
\[
\Big\| \frac{1}{n} \sum_{i=1}^n x_i b x_i^* -   E[x_1 b x_1^*] \Big\|_{L^2}
=  \frac{\sqrt{2}\| x_1 \|_\infty^2}{\sqrt{n}\  \Im(z)} ,
\]
and similarly for the second term.
To control the third term we write 
\begin{align*}
 \big\| \mu b \mu^* -  E[x_{1}]b E[x_{1}^*] \big\| _{L^2} 
&=  \big\| (\mu - E[x_1]) b \mu^* +  E[x_{1}]b  (\mu - E[x_{1}])^*  \big\| _{L^2} 
\\ & \leq 2 \|x_1\|_\infty \|b\|_\infty \| \mu - E[x_1]\|_{L^2}\leq 
\frac{2 \|x_1\|_\infty }{\sqrt{n}\Im(z)}, 
\end{align*}
where the estimate on $\| \mu - E[x_1]\|_{L^2}$ is obtained in a similar way by using the property that $E(x_{i}bx_{j}^*)=E(x_{1}) b E(x_{1}^*)$ whenever $i\neq j$. A similar computation allows to control the last term, and we obtain that there exists a universal constant $C>0$ such that, for all $z\in \mathbb{C}^+ $,
$$\|(\eta_n-\eta)\big( E_{\mathcal{B}_n}[ R_{{\mathbf z_i^0}}(z)] \big)\|_{L_2}\leq \frac{C \|x_1\|_\infty^2}{\sqrt{n}\Im(z)}.$$
Finally,
for all $z\in\mathbb{C}^+$,
$$
   \big|  \tau [\R_{S_n}(z)] -\tau[\R_{S}(z)]  \big|
\leq \frac{1}{\Im(z)^2} \frac{1}{m} \sum_{i=1}^m  \frac{C \|x_1\|_\infty^2}{\sqrt{n}\Im(z)}+ \frac{2\|x_1\|_\infty}{m^{1/2}} \frac{1}{\Im(z)^4},
$$
and letting $m$ tend to $\infty$, we get
$$
   \big|  \tau [\R_{S_n}(z)] -\tau[\R_{S}(z)]  \big|
\leq \frac{C \|x_1\|_\infty^2}{\Im(z)^3}   \frac{1}{\sqrt{n}}.$$
\end{proof}

\section{Operator-valued matrices}\label{section:Op-v-matrices} 
For all tracial $W^*$-probability space $(\M, \tau)$  and   a finite  family of elements $\uX=(x_{ij})_{1\leq i,j\leq N} $ in $\M$, we consider the $N \times N$ operator-valued  matrix $A(\ux)$  given by 
\[
[A(\ux)]_{ij}=\frac{1}{\sqrt{2N}}(x_{ij}+x_{ji}^*).
\]
Recall that, under a condition of exchangeability, Theorem~\ref{thm:Op-v-matrices-exch} allows to approximate $A(\ux)$ by a operator-valued semicircular random variable. In this very general situation, the tracial $W^*$-probability space $(\M, \tau)$ is not specified. In this section, we review various situations where exchangeable entries appear naturally and where approximations by operator-valued semicircular random variables are possible, either as a direct corollary of Theorem~\ref{thm:Op-v-matrices-exch}, or using the Lindeberg method of the previous section.

\subsection{Free entries}\label{section:Matrixfree}
In this section, we show that a  matrix with free entries, that can possess different variances,  is close in  distribution to a  matrix  in free circular elements with the same variance structure. The latter is an operator-valued semicircular element over the subalgebra of complex-valued diagonal matrices. Moreover, we give explicit quantitative estimates for the associated Cauchy transforms on the upper complex half-plane. 
\begin{theorem}\label{theo:op-v-free}
Let  $(\M, \tau)$ be a tracial $W^*$-probability spaces. Let  $\uX=(x_{ij})_{1\leq i,j\leq N} $  and $\uY=(y_{ij})_{1\leq i,j\leq N} $ be two families of \emph{free}  variables in $\M$. Assume that 
\begin{itemize}
\item  $y_{ij}$ is a circular element for all $1\leq i,j\leq N$, 
\item  $\tau( x_{ij})=\tau( y_{ij})=0$  and $\tau(x_{ij}^* x_{ij})=\tau(y_{ij}^* y_{ij})$ for all $1\leq i,j\leq N$,
\item $\tau( x_{ij}^2)=0$ for all $1\leq i,j\leq N$.
\end{itemize}
Then for any $z\in \mathbb{C}^+$,
\[
\big| (\tau \otimes \tr_N)[\R_{A(\uX)}(z)] - (\tau \otimes \tr_N)[\R_{A(\uY)}(z)] \big|  \leq \frac{C}{\Im (z)^4}\frac{1}{\sqrt{N}} \, ,
\]
for some constant $C>0$ that depends only on $\max_{1\leq i,j\leq N} \|x_{ij}\|_\infty$.
\end{theorem}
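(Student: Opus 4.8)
The plan is to deduce the statement from the free Lindeberg estimate of Theorem~\ref{theo:Lindeberg-free}, applied inside the matrix amplification $(M_N(\M),\tau\otimes\tr_N)$ — which is again a tracial $W^*$-probability space — with the algebra of scalar matrices $\B:=1_\M\otimes M_N(\C)$ in the role of the amalgamation subalgebra; the associated conditional expectation $M_N(\M)\to\B$ is $\tau[\,\cdot\,|\B]=\tau\otimes\id_{M_N}$. First I would write, using \eqref{def:map_matrix}, $A(\uX)=\sum_{i,j=1}^N X_{ij}$ and $A(\uY)=\sum_{i,j=1}^N Y_{ij}$ with $X_{ij}=x_{ij}\otimes a_{ij}+x_{ij}^*\otimes a_{ij}^*$, $Y_{ij}=y_{ij}\otimes a_{ij}+y_{ij}^*\otimes a_{ij}^*$ and $a_{ij}=\tfrac{1}{\sqrt{2N}}E_{ij}$, so that we are comparing the $\tau\otimes\tr_N$-Cauchy transforms of two sums of $n=N^2$ self-adjoint elements of $M_N(\M)$. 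As in the proof of Theorem~\ref{theo:Lindeberg-free}, the two transforms depend only on the (fixed) joint distribution of $\uX$ and, separately, of $\uY$, so I may assume without loss of generality that $\{x_{ij}\}_{i,j}\cup\{y_{ij}\}_{i,j}$ is free in $(\M,\tau)$, each $y_{ij}$ still circular. A routine check of the definition of freeness with amalgamation then shows that the subalgebras $W^*(x_{ij})\otimes M_N(\C)$ and $W^*(y_{ij})\otimes M_N(\C)$ are free with amalgamation over $\B$: a $\tau$-centred element of $W^*(x_{ij})$ tensored with a matrix is $\tau[\,\cdot\,|\B]$-centred, and along an alternating word such a product equals $(\mathring a_1\cdots\mathring a_\ell)\otimes(m_1\cdots m_\ell)$, whose conditional expectation is $\tau(\mathring a_1\cdots\mathring a_\ell)\otimes(m_1\cdots m_\ell)=0$ by scalar freeness. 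In particular the $2n$ self-adjoint elements $X_{ij},Y_{ij}$ are free with amalgamation over $\B$.

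Next I would verify the moment hypotheses of Theorem~\ref{theo:Lindeberg-free}. Since $\tau(x_{ij})=\tau(y_{ij})=0$, one has $\tau[X_{ij}|\B]=\tau[Y_{ij}|\B]=0$. Expanding $X_{ij}(1\otimes b)X_{ij}$ for $b\in M_N(\C)$ and applying $\tau[\,\cdot\,|\B]$ yields
\begin{multline*}
\tau[X_{ij}\,b\,X_{ij}\,|\,\B]=\tau(x_{ij}^2)\,a_{ij}ba_{ij}+\tau(x_{ij}x_{ij}^*)\,a_{ij}ba_{ij}^*\\
+\tau(x_{ij}^*x_{ij})\,a_{ij}^*ba_{ij}+\tau((x_{ij}^*)^2)\,a_{ij}^*ba_{ij}^*.
\end{multline*}
By hypothesis $\tau(x_{ij}^2)=0$, hence $\tau((x_{ij}^*)^2)=\overline{\tau(x_{ij}^2)}=0$, while $\tau(x_{ij}x_{ij}^*)=\tau(x_{ij}^*x_{ij})$ by traciality, so the right-hand side collapses to $\tau(x_{ij}^*x_{ij})\,(a_{ij}ba_{ij}^*+a_{ij}^*ba_{ij})$. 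The same computation for $Y_{ij}$, using that a circular element satisfies $\tau(y_{ij}^2)=0$, gives $\tau(y_{ij}^*y_{ij})\,(a_{ij}ba_{ij}^*+a_{ij}^*ba_{ij})$; as $\tau(x_{ij}^*x_{ij})=\tau(y_{ij}^*y_{ij})$ these coincide. Theorem~\ref{theo:Lindeberg-free} then gives
\[
\big|(\tau\otimes\tr_N)[G_{A(\uX)}(z)]-(\tau\otimes\tr_N)[G_{A(\uY)}(z)]\big|\le\frac{K_\infty K_2^2}{\Im(z)^4}\,n,
\]
with $K_\infty=\max_{i,j}(\|X_{ij}\|_\infty+\|Y_{ij}\|_\infty)$ and $K_2=\max_{i,j}(\|X_{ij}\|_{L^2}+\|Y_{ij}\|_{L^2})$, the norms being those of $(M_N(\M),\tau\otimes\tr_N)$.

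It remains to estimate these quantities. From $\|a_{ij}\|_\infty=\tfrac{1}{\sqrt{2N}}$ one gets $\|X_{ij}\|_\infty\le\sqrt2\,\|x_{ij}\|_\infty N^{-1/2}$; since $y_{ij}$ is circular with $\tau(y_{ij}^*y_{ij})=\tau(x_{ij}^*x_{ij})\le\|x_{ij}\|_\infty^2$, its norm is $\|y_{ij}\|_\infty=2\sqrt{\tau(y_{ij}^*y_{ij})}\le2\|x_{ij}\|_\infty$, whence $\|Y_{ij}\|_\infty\le2\sqrt2\,\|x_{ij}\|_\infty N^{-1/2}$; so $K_\infty=\mathcal O(N^{-1/2})$. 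For the $L^2$-norms, a short computation using $a_{ij}a_{ij}^*=\tfrac{1}{2N}E_{ii}$, $\tr_N(E_{ii})=1/N$ and $\tau(x_{ij}^2)=0$ gives $(\tau\otimes\tr_N)[X_{ij}^2]=\tau(x_{ij}^*x_{ij})/N^2$, hence $\|X_{ij}\|_{L^2}\le\|x_{ij}\|_\infty N^{-1}$ and likewise for $Y_{ij}$; so $K_2=\mathcal O(N^{-1})$. Substituting into the bound above with $n=N^2$ yields $\dfrac{K_\infty K_2^2}{\Im(z)^4}\,n=\mathcal O(N^{-1/2})$, with a constant of the form $C\,\big(\max_{i,j}\|x_{ij}\|_\infty\big)^3\,\Im(z)^{-4}$ for a universal $C$, which is the claim.

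The argument is a clean reduction, so the two points needing care are quantitative rather than conceptual. First, one must check that the matrix amplification genuinely produces freeness \emph{with amalgamation over the scalar matrices} $\B$ — a standard but not automatic fact, handled by the alternating-word computation above. Second, it is essential that the $\tfrac{1}{\sqrt{2N}}$ normalisation together with $\tr_N(E_{ii})=1/N$ pushes $K_2$ down to order $N^{-1}$ rather than merely $N^{-1/2}$; this is exactly what turns the crude bound $n\,K_\infty K_2^2$ into something of order $N^{-1/2}$. Finally, note that the hypothesis $\tau(x_{ij}^2)=0$ is used at exactly one place — the computation of $\tau[X_{ij}bX_{ij}|\B]$ — and is precisely what makes it match $\tau[Y_{ij}bY_{ij}|\B]$.
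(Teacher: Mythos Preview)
Your proof is correct and follows precisely the paper's own approach: apply Theorem~\ref{theo:Lindeberg-free} in $(M_N(\M),\tau\otimes\tr_N)$ with $\B=M_N(\C)$ and conditional expectation $\tau\otimes\id$, verify the first and second conditional moments match (this is where $\tau(x_{ij}^2)=0$ and the circularity of $y_{ij}$ enter), and conclude via $K_\infty=\mathcal O(N^{-1/2})$, $K_2=\mathcal O(N^{-1})$. You have simply supplied the details that the paper leaves to the reader.
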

 $A(\uY)$ is  an operator-valued semicircular element  over the subalgebra $\mathcal{D} \subset M_N (\C)$ of diagonal matrices  whose variance is given by the  completely positive map 
$
\eta : \mathcal{D} \rightarrow \mathcal{D}
$
defined by 
\[
[\eta (b)]_{ij}= \delta_{ij}\frac{1}{2N}\sum_{k=1}^N \big(\tau (x_{ik} x_{ik}^*)+\tau (x_{ki}^* x_{ki})\big) b_{kk}, \quad \text{for} \, \,  b=[b_{kl}]_{k,l=1}^N \, .
\]
One can check Chapter 9 in \cite{Mingo-Speicher} for more details on  operator-valued free probability. As a consequence of \cite[Theorem 3.3]{Ni-Di-Sp-operator-valued}, $A(\uY)$ is itself a semicircular element   in $\M$ if and only if $\eta(1)$ is a multiple of the identity. In particular, if  all the entries have the same variance, i.e. $\tau(x_{ij}^* x_{ij})=\sigma^2$ for all $1\leq i,j\leq N$, then $A(\uY)$ has a semicircular distribution.   Moreover, as the cumulative distribution function of a semicircular element is Lipschitz continuous, then by applying Theorem~\ref{BannaMai}, the above convergence can be quantified as follows in terms of the Kolmogorov distance:
 \[
 {\rm Kol} (\mu_{A(\uX)}, \mu_{A(\uY)}) \leq c N^{-1/12}.
 \]

\begin{proof}[Proof of Theorem \ref{theo:op-v-free}] 
The proof of this theorem is a direct application of Theorem \ref{theo:Lindeberg-free} where we consider the $W^*$-probability space $  (M_N(\M), \tau \otimes \tr_N)$  with $ \tau \otimes \id :  M_N(\M) \rightarrow M_N(\mathbb{C})$ as  conditional expectation over $M_N(\C)\subset  M_N(\M)$ . It is easy to see that $(\tau \otimes \id) [x_{ij} \otimes^* a_{ij}] = { (\tau \otimes \id) [y_{ij} \otimes^* a_{ij}]}=0  $ and that for any $b \in M_N(\mathbb{C})$
\begin{align*}
(\tau \otimes \id)\big[(x_{ij} \otimes^* a_{ij}) b(x_{ij} \otimes^* a_{ij}) \big]
&= ( \tau \otimes \id) \big[(y_{ij} \otimes^* a_{ij}) b(y_{ij} \otimes^* a_{ij}) \big]. 
\end{align*}
 The result follows by finally noting that $K_\infty=\mathcal{O}(1/{\sqrt{N}})$ and $K_2=\mathcal{O}(1/{N})$.  
 \end{proof}

\subsection{Infinite exchangeable entries}It is well-known by an extended de Finetti theorem \cite{Kostler}, that an \emph{infinite}   family of  identically distributed and conditionally independent elements over a tail algebra is exchangeable. In this section, we consider an infinite family of exchangeable elements and prove the convergence to an operator-valued semicircular element over the tail algebra. We  provide moreover explicit rates of convergence for the associated Cauchy transforms.

\begin{theorem}\label{pro:infinite-exch-opv} Let  $(\M, \tau)$ be a tracial $W^*$-probability space with $\M$ being  a direct integral of finite dimensional factors of dimension $\leq d$. Let $\uX=(x_{ij})_{1\leq i,j<+\infty} $ be an infinite family of  exchangeable  variables in $\M$. Let $\mathcal{M}_{tail}$ be the tail subalgebra and $E:\mathcal{M}\to \mathcal{M}_{tail}$ be its conditional expectation. Consider a centered semicircular variable $S$ over $\mathcal{M}_{tail}$ of variance 
$$E(SbS)= \frac{1}{2} \big( E \big((x_{11}-E(x_{11}))b(x_{11}-E(x_{11}))^*\big) + E \big((x_{11}-E(x_{11}))^*b(x_{11}-E(x_{11}))\big) \big).$$
Fix $N \in \mathbb{N}$ and let $A$ be the linear map defined in \eqref{def:map_matrix}.  Then  there exists a constant $C>0$ which depends on $d$ and $\|x_{11}\|_\infty$, such that for any $z \in \mathbb{C}^+$, 
\[
\big|(\tau \otimes \tr_N)[\R_{A(\uX)}(z)] -\tau[\R_{S}(z)] \big| \leq C \frac{(|z| \vee 1)^4}{(\Im(z)\wedge 1)^6}\frac{1}{\sqrt{N}}.
\]
\end{theorem}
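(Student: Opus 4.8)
The plan is to combine Theorem~\ref{thm:Op-v-matrices-exch} with Lemma~\ref{covcirc}: the former compares $A(\uX)$ with the semicircular element whose covariance is the \emph{empirical} variance function $\eta_n$ of the $N^2$ entries, while the latter lets us replace that empirical covariance by the one attached to the tail algebra $\M_{tail}$.

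First I would relabel the finite family $(x_{ij})_{1\le i,j\le N}$ as $(x_k)_{1\le k\le n}$ with $n=N^2$. Being a finite subfamily of an infinite exchangeable sequence it is exchangeable, and $\M$ is a direct integral of finite-dimensional factors of dimension $\le d$, so Theorem~\ref{thm:Op-v-matrices-exch} applies to $A(\uX)$. Writing $\eta_n$ for the empirical variance function of $(x_k)$, $\B_n$ for the smallest $W^*$-algebra closed under $\eta_n$, and $S_n$ for the centered $\B_n$-valued semicircular element of variance $\eta_n$, and noting that $\|\bar x_k\|_\infty\le\|x_k\|_\infty+\|\mu\|_\infty\le 2\|x_{11}\|_\infty$ forces $\|\eta_n\|\le 4\|x_{11}\|_\infty^2$, Theorem~\ref{thm:Op-v-matrices-exch} gives, for every $z$ with $Im(z)>2\|x_{11}\|_\infty$,
\[
(\tau\otimes\tr_N)[G_{A(\uX)}(z)]-\tau[G_{S_n}(z)]=\mathcal{O}_z\!\Big(\tfrac{1}{\sqrt N}\Big).
\]

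It remains to compare $S_n$ with the $\M_{tail}$-valued semicircular element $S$ of the statement, and this is precisely what Lemma~\ref{covcirc} is built for; the point is to check its hypotheses with $\B=\M_{tail}$. By the extended de Finetti theorem \cite{Kostler}, the infinite exchangeable family $(x_{ij})$ is identically distributed and conditionally independent over $\M_{tail}$; expanding monomials via the $\B$-bimodule property of $E=\tau[\cdot\,|\M_{tail}]$ together with this conditional independence yields, for distinct indices and all $b,c\in\M_{tail}$, the factorizations $E(x_ibx_j^*)=E(x_1)bE(x_1^*)$, $E(x_i^*bx_j)=E(x_1^*)bE(x_1)$, $E(x_ibx_i^*x_kcx_k^*)=E(x_1bx_1^*)E(x_1cx_1^*)$ and $E(x_i^*bx_ix_k^*cx_k)=E(x_1^*bx_1)E(x_1^*cx_1)$ required by Lemma~\ref{covcirc}. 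Using $E(x_1^*\,bE(x_1))=E(x_1^*)bE(x_1)$ one also sees that $E(x_1^*bx_1)-E(x_1^*)bE(x_1)=E\big((x_1-E(x_1))^*b(x_1-E(x_1))\big)$, and symmetrically for the adjoint, so the variance $\eta$ that Lemma~\ref{covcirc} attaches to $S$ coincides with the one in the statement. Lemma~\ref{covcirc} then gives, for $Im(z)>2\|x_{11}\|_\infty$,
\[
\tau[G_{S_n}(z)]-\tau[G_S(z)]=\mathcal{O}_z\!\Big(\tfrac1N\Big).
\]

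Adding the two displays and recalling $n=N^2$ yields $(\tau\otimes\tr_N)[G_{A(\uX)}(z)]-\tau[G_S(z)]=\mathcal{O}_z(1/\sqrt N)+\mathcal{O}_z(1/N)=\mathcal{O}_z(1/\sqrt N)$, which is the desired conclusion. The only step that is not a mechanical assembly of earlier results is the third paragraph: converting the qualitative de Finetti conclusion ``exchangeable $\Rightarrow$ conditionally i.i.d.\ over the tail algebra'' into the precise moment factorizations that Lemma~\ref{covcirc} takes as input. I expect this --- rather than any estimate --- to be the main obstacle.
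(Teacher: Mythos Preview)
Your proposal is correct and follows essentially the same route as the paper: apply Theorem~\ref{thm:Op-v-matrices-exch} to pass from $A(\uX)$ to the semicircular $S_n$ with empirical covariance $\eta_n$, then invoke Lemma~\ref{covcirc} with $\B=\M_{tail}$, checking its factorization hypotheses via the conditional independence furnished by the extended de~Finetti theorem~\cite{Kostler}. Your write-up is in fact more detailed than the paper's (you verify $\|\eta_n\|\le 4\|x_{11}\|_\infty^2$ explicitly and spell out why the covariance in Lemma~\ref{covcirc} matches the one in the statement), but the architecture is identical.
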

\begin{proof}
Theorem~\ref{thm:Op-v-matrices-exch} ensures that  there exists a constant $C_1>0$ such that for any $z \in \mathbb{C}^+$, 
\[
\big|(\tau \otimes \tr_N)[\R_{A(\uX)}(z)] -\tau[\R_{S_N}(z)]\big|\leq C_1 d^{2}\frac{(|z|\vee \|x_{11}\|_\infty\vee 1)^4}{(\Im(z)\wedge 1)^6}\frac{1}{\sqrt{N}},
\]
where $S_N$ is a centered semicircular variable of variance 
$$E_{\mathcal{B}}(S_NbS_N)= \eta(b) =\frac{1}{2n}\sum_{i,j} \big(\bar{x}_{ij}b\bar{x}_{ij}^* + \bar{x}_{ij}^*b\bar{x}_{ij}\big),$$
 where $\bar{x}_{ij}= x_{ij} - \frac{1}{n} \sum_{1\leq k,l \leq N} x_{k l}$ and $n= N^2$.
By using Lemma~\ref{covcirc}, we prove that there exists a constant $C_2>0$ such that for any $z \in \mathbb{C}^+$, 
$$\big|\tau[\R_{S_N}(z)] -\tau[\R_{S}(z)] \big|\leq \frac{C_2\|x_{11}\|_\infty^2}{\Im(z)^3}\frac{1}{N}.$$
Indeed, by the conditional independence of the $x_{ij}$'s with respect to $E$, see  \cite{Kostler}, we have  for any $ b,c \in \M_{tail}$
$$E(x_{ij}bx_{kl}^*) =E(x_{11})bE(x_{11}^*), \quad 
E(x_{ij}^*bx_{kl}) =E(x_{11}^*)bE(x_{11}),  $$
and 
$$E(x_{ij}^*bx_{ij}x_{kl}^*cx_{kl})=E(x_{11}^*bx_{11})E(x_{11}^*cx_{11}), \quad
E(x_{ij}bx_{ij}^*x_{kl}cx_{kl}^*)=E(x_{11}bx_{11}^*)E(x_{11}cx_{11}^*), $$
 whenever $x_{ij}\neq x_{kl}$. 
\end{proof}

\subsection{Independent non-scalar entries}Independent and identically distributed matrices can be seen as exchangeable elements in some tracial $W^*$-probability space  $\M$ and thus many random block matrices fit nicely in the framework of operator-valued matrices with exchangeable entries.
\begin{proposition}\label{pro:ind-entries} Let  $(\M, \tau)$ be a tracial $W^*$-probability space with $\M$ being  a direct integral of finite dimensional factors of dimension $\leq d$. Let $\uX=(x_{ij})_{1\leq i,j\leq N} $ be a family of random  variables in $\M$ which are independent, identically distributed, and bounded by $K>0$. Consider a centered semicircular variable $S$ over $\mathcal{M}$ of variance 
$$b\mapsto \frac{1}{2}\big[ \mathbb{E} \big((x_{11}-\mathbb{E}(x_{11}))b(x_{11}-\mathbb{E}(x_{11}))^*\big)+\mathbb{E} \big((x_{11}-\mathbb{E}(x_{11}))^*b(x_{11}-\mathbb{E}(x_{11}))\big)\big].$$
Let $A$ be the linear map defined in \eqref{def:map_matrix}.  Then  there exists a universal constant $C>0$, which depends on $d$ and $K$, such that for any $z \in \mathbb{C}^+$,  
\[
\big|(\tau \otimes \tr_N)[\R_{A(\uX)}(z)] -\tau[\R_{S}(z)]\big| \leq C \frac{(|z|\vee 1)^4}{(\Im(z)\wedge 1)^6}\frac{1}{\sqrt{N}}.
\]
\end{proposition}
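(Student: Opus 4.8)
The plan is to deduce this statement from results already established, exactly as in the proof of Theorem~\ref{pro:infinite-exch-opv}: the family $\uX$ is exchangeable because it is i.i.d., and the conditional‑expectation factorizations needed to identify the limiting covariance follow here directly from independence. First I would make the probabilistic framework explicit by passing to the tracial $W^*$‑probability space $(\widetilde{\M},\widetilde\tau):=(L^\infty(\Omega,\P)\bar\otimes\M,\ \E\otimes\tau)$, where $(\Omega,\P)$ carries the randomness; the $x_{ij}$ are then genuine elements of $\widetilde{\M}$ with $\|x_{ij}\|_\infty\leq K$. Two points must be checked: $(i)$ since $L^\infty(\Omega,\P)$ is abelian, $\widetilde{\M}=\int^\oplus\M\,\diff\P$ is still a direct integral of finite dimensional factors of dimension $\leq d$, so the structural hypothesis of Theorem~\ref{thm:Op-v-matrices-exch} persists; and $(ii)$ a finite i.i.d.\ tuple is exchangeable with respect to $\widetilde\tau$, because $\widetilde\tau[P(\ldots)]=\E[\tau(P(\ldots))]$ and the classical joint law of an i.i.d.\ tuple is permutation invariant. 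The subalgebra onto which we project is $\mathcal B=\M$, identified with the constants $1\otimes\M\subset\widetilde{\M}$, with conditional expectation $E=\E\otimes\id$.

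With this setup, I would apply Theorem~\ref{thm:Op-v-matrices-exch} to the exchangeable family $\uX\in\widetilde{\M}^{\,n}$ with $n=N^2$. Since $\|\eta_n\|\leq 4\|x_{11}\|_\infty^2\leq 4K^2$, the hypothesis $Im(z)>\|\eta_n\|^{1/2}$ holds whenever $Im(z)>2K$, and the theorem gives
\[
\bigl|(\widetilde\tau\otimes\tr_N)[G_{A(\uX)}(z)]-\widetilde\tau[G_{S_N}(z)]\bigr|=\mathcal{O}_z\bigl(1/\sqrt N\bigr),
\]
where $S_N$ is the centered semicircular element over $\mathcal B_N$ with the empirical covariance $\eta_n(b)=\frac1{2n}\sum_{i,j}(\bar x_{ij}b\bar x_{ij}^*+\bar x_{ij}^*b\bar x_{ij})$, $\bar x_{ij}=x_{ij}-\mu$, $\mu=\frac1n\sum_{i,j}x_{ij}$, and the implicit constant depends only on $z$, $K$ and $d$, since every term of the estimate in Theorem~\ref{thm:Op-v-matrices-exch} is controlled by $\|x_{11}\|_\infty\leq K$, $Im(z)$ and $d$.

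Next I would invoke Lemma~\ref{covcirc} with $\mathcal B=\M$ and $E=\E\otimes\id$ to pass from $S_N$ to the target $S$. Its covariance $\eta(b)=\tfrac12\bigl(E(x_{11}^*bx_{11})-E(x_{11}^*)bE(x_{11})+E(x_{11}bx_{11}^*)-E(x_{11})bE(x_{11}^*)\bigr)$ equals $\tfrac12\bigl(\E((x_{11}-\E x_{11})^*b(x_{11}-\E x_{11}))+\E((x_{11}-\E x_{11})b(x_{11}-\E x_{11})^*)\bigr)$, which is exactly the variance in the statement. The factorization hypotheses of Lemma~\ref{covcirc} are where true independence, rather than mere exchangeability, is used, but here they are immediate: for any two distinct entries $x_{ij},x_{kl}$ and $b,c\in\M$, conditioning on $x_{ij}$ and using independence and identical distribution gives $E(x_{ij}bx_{kl}^*)=\E[x_{ij}b\,\E(x_{kl}^*)]=E(x_{11})bE(x_{11}^*)$, and likewise $E(x_{ij}bx_{ij}^*x_{kl}cx_{kl}^*)=\E[x_{ij}bx_{ij}^*]\,\E[x_{kl}cx_{kl}^*]=E(x_{11}bx_{11}^*)E(x_{11}cx_{11}^*)$, together with the adjoint versions. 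Since $Im(z)>2K\geq 2\|x_{11}\|_\infty$, Lemma~\ref{covcirc} yields $|\widetilde\tau[G_{S_N}(z)]-\tau[G_S(z)]|=\mathcal{O}_z(1/N)$. Adding the two displays and using $(\widetilde\tau\otimes\tr_N)[G_{A(\uX)}(z)]=\E\bigl[(\tau\otimes\tr_N)(G_{A(\uX)}(z))\bigr]$ finishes the proof with overall rate $\mathcal{O}_z(1/\sqrt N)$.

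Since the statement reduces to a combination of Theorem~\ref{thm:Op-v-matrices-exch} and Lemma~\ref{covcirc}, there is no genuine analytic difficulty; the only things requiring care are the verification that $\widetilde{\M}$ still meets the finite‑dimensionality assumption, the elementary derivation of the conditional‑expectation factorizations from independence, and the bookkeeping that keeps all implicit constants depending solely on $z$, $K$ and $d$. The mildly delicate point is conceptual rather than technical: one must remember to work inside $L^\infty(\Omega,\P)\bar\otimes\M$, so that the \emph{random} entries become honest operators to which the earlier noncommutative results apply.
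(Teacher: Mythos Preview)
Your proposal is correct and follows essentially the same route as the paper: pass to the tracial $W^*$-probability space $L^\infty(\Omega;\M)$ (noting it is still a direct integral of finite-dimensional factors of dimension $\leq d$), use that an i.i.d.\ family is exchangeable to invoke Theorem~\ref{thm:Op-v-matrices-exch}, and then apply Lemma~\ref{covcirc} with $\mathcal B=\M$ and $E=\E\otimes\id$, the required factorizations coming directly from independence. You are in fact slightly more explicit than the paper in checking $Im(z)>\|\eta_n\|^{1/2}$ and in spelling out why the structural hypothesis on $\widetilde{\M}$ persists.
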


\begin{proof}Let us remark that the space $(L^{\infty}(\Omega;\mathcal{M}),\mathbb{E}\circ \tau)$ of $\mathcal{M}$-valued bounded random variables is a tracial $W^*$-probability space which is a direct integral of finite dimensional factors of dimension $\leq d$. Theorem~\ref{thm:Op-v-matrices-exch} ensures that  there exists a constant $C_1>0$ such that for any $z \in \mathbb{C}^+$, 
\[
\big|(\tau \otimes \tr_N)[\R_{A(\uX)}(z)] -\tau[\R_{S_N}(z)] \big| \leq C_1 d^{2}\frac{(|z|\vee \|x_{11}\|_\infty\vee 1)^4}{(\Im(z)\wedge 1)^6}\frac{1}{\sqrt{N}}, 
\]
where $S_N$ is a centered semicircular variable of variance $E_{\mathcal{B}}(S_NbS_N)= \eta_n(b) =\frac{1}{n}\sum_{i,j=1}^N  \bar{x}_{ij}b\bar{x}_{ij}$ where $\bar{x}_{ij}= x_{ij} - \frac{1}{n} \sum_{k,l=1}^N  x_{k l}$ and $n= N^2$.
By using Lemma~\ref{covcirc}, we prove that there exists a constant $C_2>0$ such that for any $z \in \mathbb{C}^+$, 
$$\big|\tau[\R_{S_N}(z)] -\tau[\R_{S}(z)]\big|
\leq \frac{C_2K^2}{\Im(z)^3}\frac{1}{N}. 
$$
Indeed, by the independence of the $x_{ij}$'s, we have  for any $ b,c \in \M$
 $$E(x_{ij}bx_{kl}^*)=E(x_{11}) bE(x_{11}^*),\quad E(x_{ij}^*bx_{kl})=E(x_{11}^*) bE(x_{11}),$$
 and
$$ E(x_{ij}bx_{ij}^*x_{kl}cx_{kl}^*)=E(x_{11}bx_{11}^*)E(x_{1}c x_{11}^*),\quad E(x_{ij}^*bx_{ij}x_{kl}^*cx_{kl})=E(x_{11}^*bx_{11})E(x_{1}^*c x_{11}),$$
 whenever $x_{ij}\neq x_{kl}$. 
\end{proof}

\subsection{Random matrices with i.i.d. blocks}Since i.i.d. matrices are exchangeable, random block matrices with i.i.d blocks fit instantely in our framework. We generalize the result in \cite{Girko-book} by considering matrices that are not necessarily bounded in norm and give  quantitative estimates for the Cauchy transforms. 

\begin{theorem}\label{the:i.i.dblocksKroneckerform}
Let $\{ A_{ij}=(a^{(ij)}_{kl})_{k,l=1}^d : 1\leq i, j \leq N\}$ be a family of $N^2$ independent and identically distributed $d\times d$ random matrices that are such that $\E [|a^{(ij)}_{kl}|^3]<\infty$. Let $X_N$ be   the block  matrix in $M_d(\mathbb{C})\otimes M_N(\mathbb{C})$  given by $$X_N=\frac{1}{\sqrt{2N}}\sum_{i,j=1}^N( A_{ij}\otimes E_{ij}+A_{ij}^*\otimes E_{ji}).$$  Then, as $N\to \infty$, the matrix $X_N$ has a limiting eigenvalue distribution whose Cauchy transform $g$ is determined by $g(z)=\tr_d(\R(z))$, where $\R$ is an $M_d(\mathbb{C})$-valued analytic function on $\mathbb{C}^+$  uniquely determined by the requirement that, $z\R(z)\to 1$ when $|z|\to \infty$, and that, for $z\in \mathbb{C}^+$, \begin{equation}\label{Cauchy-Master-equation}
z\R(z)=1+\eta(\R(z))\cdot \R(z),
\end{equation}
where $\eta:M_d(\mathbb{C})\to M_d(\mathbb{C})$ is the covariance mapping
$$\eta(B)=\frac{1}{2}\left(\mathbb{E}\left[(A_{11}-\mathbb{E}[A_{11}])B(A_{11}-\mathbb{E}[A_{11}])^*\right]+\mathbb{E}\left[(A_{11}-\mathbb{E}[A_{11}])^*B(A_{11}-\mathbb{E}[A_{11}])\right]\right).$$
Moreover, there exists a constant $C>0$ which depends on the distribution of $A_{11}$, and such that for any $z \in \mathbb{C}^+$,
\[
\big|\mathbb{E}[\tr_{dN}(\R_{X_N}(z))] - g(z)\big|
\leq C\frac{(|z| \vee 1)^4}{(\Im(z) \wedge 1)^6}\frac{1}{\sqrt{N}}.
\]
\end{theorem}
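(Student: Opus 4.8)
\emph{Overview.} The plan is to compare $X_N$ with the operator-valued semicircular element in two moves: first, using the noncommutative Lindeberg method, we replace the i.i.d.\ blocks $A_{ij}$ by independent Gaussian $d\times d$ matrices with the same mean and second-order statistics; second, using Gaussian interpolation, we compare the resulting Gaussian block matrix with the semicircular element of variance $\eta$. Throughout, $X_N$ is regarded as the operator-valued matrix $A(\uX)$ of \eqref{def:map_matrix} with entries $A_{ij}$; for each realization $X_N$ is a finite self-adjoint matrix, so $\|G_{X_N}(z)\|\leq 1/Im(z)$ and $\mathbb{E}[\tr_{dN}(G_{X_N}(z))] = (\mathbb{E}\otimes\tr_{dN})[G_{X_N}(z)]$ is well defined on $\mathbb{C}^+$ even if the $A_{ij}$ are unbounded, which is why we will not truncate.

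\emph{Step 1: swap to Gaussian blocks.} We introduce independent Gaussian $d\times d$ matrices $(\tilde A_{ij})_{1\leq i,j\leq N}$, independent of $(A_{ij})$, with $\mathbb{E}[\tilde A_{ij}] = \mathbb{E}[A_{11}]$ and with covariance $\mathbb{E}[(\tilde A_{ij})_{ab}\overline{(\tilde A_{ij})_{cd}}]$ and pseudo-covariance $\mathbb{E}[(\tilde A_{ij})_{ab}(\tilde A_{ij})_{cd}]$ equal to those of $A_{11}$, and set $Y_N = \frac{1}{\sqrt{2N}}\sum_{i,j}(\tilde A_{ij}\otimes E_{ij}+\tilde A_{ij}^*\otimes E_{ji})$. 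Running the telescoping identity of Theorem~\ref{thm:Lindeberg} on $X_N$ realization by realization, with $n = N^2$, $x_{(i,j)} = \frac{1}{\sqrt{2N}}(A_{ij}\otimes E_{ij}+A_{ij}^*\otimes E_{ji})$ and $y_{(i,j)}$ its analogue with $\tilde A_{ij}$, and then taking expectations, we note that $\mathbf z_{(i,j)}^0$ is independent of $(A_{ij},\tilde A_{ij})$; conditioning on it makes $P_{(i,j)}$ see only $\mathbb{E}[x_{(i,j)}-y_{(i,j)}] = 0$ and $Q_{(i,j)}$ see only the matched second moments, so $\mathbb{E}[P_{(i,j)}] = \mathbb{E}[Q_{(i,j)}] = 0$. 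It then remains to bound $\sum_{i,j}|\mathbb{E}[R_{(i,j)}]|$, and by noncommutative H\"older (three $L^3$-factors against $L^\infty$ resolvents) $|\mathbb{E}[R_{(i,j)}]| \leq Im(z)^{-4}(\|x_{(i,j)}\|_{L^3}^3 + \|y_{(i,j)}\|_{L^3}^3)$. Since $\|x_{(i,j)}\|_{L^3}^3 = \frac{2\,\mathbb{E}[\tr_d|A_{11}|^3]}{N(2N)^{3/2}}$ and $\|y_{(i,j)}\|_{L^3}^3$ satisfies the same bound with a Gaussian third moment (a polynomial in the covariance of $A_{11}$), the $N^2$ terms sum to $\mathcal{O}_z(1/\sqrt N)$, valid for every $z\in\mathbb{C}^+$; this is the only use of the hypothesis $\mathbb{E}[|a^{(ij)}_{kl}|^3]<\infty$.

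\emph{Step 2: Gaussian block matrix to semicircular, and the main obstacle.} The mean of $Y_N$ is $\frac{1}{\sqrt{2N}}(\mathbb{E}[A_{11}]+\mathbb{E}[A_{11}]^*)\otimes J$ with $J=\sum_{i,j}E_{ij}$, of rank $\leq d$, so removing it will cost only $\mathcal{O}_z(1/N)$ by the resolvent rank inequality, leaving $Y_N^\circ = \frac{1}{\sqrt{2N}}\sum_{i,j}(\bar{\tilde A}_{ij}\otimes E_{ij}+\bar{\tilde A}_{ij}^*\otimes E_{ji})$ with $\bar{\tilde A}_{ij}$ i.i.d.\ centered Gaussian. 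To $Y_N^\circ$ we will apply the integration-by-parts scheme in the proof of Proposition~\ref{thm:RMstieltjestwo}: Gaussian integration by parts in the entries of $\bar{\tilde A}_{ij}$ yields the approximate matrix Dyson equation $z\,g_N(z) = 1 + \eta(g_N(z))\,g_N(z) + \mathcal{O}_z(1/N)$ for $g_N(z):=\mathbb{E}[(\id_d\otimes\tr_N)(G_{Y_N^\circ}(z))]$ with $\eta$ the map of the statement, the error collecting the partial-transpose ($\Theta$) terms from the off-diagonal Stein contributions and the pseudo-covariance (of size $\mathcal{O}_z(1/N)$ by Lemma~\ref{Normtranspose}) together with the fluctuation $\|(\id_d\otimes\tr_N)(G_{Y_N^\circ})-g_N\|_{L^2}=\mathcal{O}_z(1/\sqrt N)$ from Pisier's inequality (Theorem~\ref{theo:LedouxOl}), whose product stays $\mathcal{O}_z(1/N)$. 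For $Im(z)>\|\eta\|^{1/2}$ the map $G\mapsto G_{\eta(G)}(z)$ is a strict contraction, so comparison with the exact solution $G(z)$ of \eqref{Cauchy-Master-equation} gives $\|g_N(z)-G(z)\|=\mathcal{O}_z(1/N)$, hence $\mathbb{E}[\tr_{dN}(G_{Y_N}(z))] = \tr_d(G(z))+\mathcal{O}_z(1/N) = g(z)+\mathcal{O}_z(1/N)$; taking $K=\|\eta\|^{1/2}$ and combining with Step 1 finishes the proof. The main obstacle is precisely this second step: Proposition~\ref{thm:RMstieltjestwo} is stated for bounded entries, so its proof must be re-run with the unbounded Gaussian blocks, each $\|x_{11}\|_\infty$ being replaced by a (finite) Gaussian $L^p$-norm of $\bar{\tilde A}_{11}$ bounded by a polynomial in the covariance of $A_{11}$; passing through Gaussians this way is what lets us avoid truncating the original blocks, for which the requirement $Im(z)>\|\eta_T\|^{1/2}$ in Theorem~\ref{thm:Op-v-matrices-exch} would degenerate as the truncation level grows and no $T=T(N)$ would keep the truncation error at order $1/\sqrt N$.
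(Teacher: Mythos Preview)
Your approach is correct, but it diverges from the paper's in the choice of replacement blocks. The paper also uses the Lindeberg swap in Step~1, but it replaces the $A_{ij}$ not by Gaussians but by \emph{bounded} i.i.d.\ matrices $B_{ij}$ with the same mean and entrywise covariance structure, constructed by linearly decorrelating the entries of $A_{11}$ and replacing each resulting coordinate by an independent two-point distribution with matching variance. With bounded blocks in hand, Step~2 is then a direct citation of Proposition~\ref{pro:ind-entries} (which packages Theorem~\ref{thm:Op-v-matrices-exch} and Lemma~\ref{covcirc}), with no need to re-run anything. Your worry that truncation would make the region $Im(z)>\|\eta_T\|^{1/2}$ degenerate is legitimate, but the paper's trick sidesteps it entirely: the replacement bound $K$ depends only on the covariance of $A_{11}$, not on $N$ or on any truncation level, so a fixed $K$ works uniformly.

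Your Gaussian route also works and is arguably the more classical one in random matrix theory. Note, though, that once you write $\bar{\tilde A}_{ij}=\sum_\alpha g_{ij,\alpha}B_\alpha$ with deterministic $B_\alpha\in M_d(\mathbb{C})$ realizing the covariance, the $B_\alpha$ \emph{are} bounded, so your Step~2 is essentially the proof of Proposition~\ref{thm:RMstieltjestwo} with the $B_\alpha$ in the role of the $\bar{x}_{kl}$; you do not actually need to replace any $\|x_{11}\|_\infty$ by a Gaussian $L^p$-norm. The paper's approach buys a clean appeal to already-proven results; yours buys directness (and a slightly sharper $\mathcal{O}_z(1/N)$ mean-removal via the rank argument) at the cost of redoing the integration-by-parts bookkeeping.
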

\begin{proof}
We remark again that the blocks $A_{ij}$ are i.i.d. and thus exchangeable. However, our Proposition~\ref{pro:ind-entries}  is only applicable if the block matrices are  bounded which is not  necessarily the case here. In order to apply our results,  we will approximate first $X_N$ by a  matrix  $Y_N$ where the blocks $B_{ij}=(b^{(ij)}_{kl})_{k,l=1}^d : 1\leq j \leq i \leq N$ are i.i.d. matrices that are bounded in norm by some $K>0$ and have the same mean and  covariance structure as the $A_{ij}$'s.
i.e. $\text{Cov}(b^{(ij)}_{pq},b^{(ij)}_{rs})= \text{Cov}(a^{(ij)}_{pq},a^{(ij)}_{rs})$ (this can be done for example by taking a linear map decorrelating the $a^{(ij)}_{kl}$'s and replacing the corresponding random variables by independent two-point distributions). Note that  $X_N$ and $Y_N$ have respectively the same distribution as
\[
 \sum_{1\leq i,j\leq N} A_{ij}\otimes^* a_{ij}
\quad \text{and} \quad
\sum_{1\leq i,j\leq N}  B_{ij}\otimes^* a_{ij}, 
\]
where $ a_{ij}= \frac{1}{ \sqrt{2N}} E_{ij}$ and $(E_{ij})_{1\leq i,j \leq N}$ are the canonical $N\times N$ matrices. Let us now denote respectively  by  $(x_1,\dots,x_n)$  and  $(y_1,\dots,y_n)$ the $n$-tuples $\{A_{ij}\otimes^* a_{ij} \, :  \, 1\leq i,j \leq N \}$ and  $\{B_{ij}\otimes^* a_{ij} \, :  \, 1\leq  i,j \leq N \}$ of  self-adjoint random  elements in $\M$. Setting for any $i \in \{1, \ldots ,n \}$, 
$$
\mathbf z_i = x_1 +\cdots + x_i+y_{i+1}+ \cdots+ y_{n}
$$
and $$ \mathbf z_i^0 = x_1 +\cdots + x_{i-1}+y_{i+1}+ \cdots + y_{n},
$$
then  $X_N$ and $Y_N$ have the same distributions as $\mathbf z_{n}$ and  $\mathbf z_{0}$ respectively. Therefore, applying the noncommutative Lindeberg method in  Theorem~3.1, we get  for any   $z\in \mathbb{C}^+$,
\[
\big|\mathbb{E}[\tr_{dN}(\R_{X_N}(z))] - \mathbb{E}[\tr_{dN}(\R_{Y_N}(z))] \big| \leq \sum_{i=1}^{n} (|\mathbb{E}[P_i]|+|\mathbb{E}[Q_i]|+|\mathbb{E}[T_i]|).
\]
where, for any $i \in \{1, \ldots ,{n} \}$, 
\[
P_i= \tr_{d}\otimes \tr_N\big[\R_{\mathbf z_i^0}(z)(x_i-y_i)\R_{\mathbf z_i^0}(z) \big],
\]
$$
Q_i=  \tr_{d}\otimes \tr_N\big[ \R_{\mathbf z_i^0}(z)x_i\R_{\mathbf z_i^0}(z)x_i\R_{\mathbf z_i^0}(z) \big]  - \tr_{d}\otimes \tr_N\big[\R_{\mathbf z_i^0}(z)y_i\R_{\mathbf z_i^0}(z)y_i\R_{\mathbf z_i^0}(z)\big],
$$
and
$$
T_i=  \tr_{d}\otimes \tr_N\big[\R_{\mathbf z_i}(z) \big(x_i \R_{\mathbf z_i^0}(z) \big)^3
\big] 
- \tau\big[\R_{\mathbf z_{i-1}}(z) \big(y_i\R_{\mathbf z_i^0}(z) \big)^3
\big] .
$$
As the $B_{ij}$'s are independent and as the first and second moments of their entries match with those of the $A_{ij}$'s, we get that $\mathbb{E}[P_i]=0$ and $\mathbb{E}[Q_i]=0$. Furthermore, we have for any $i \in \{1, \ldots , n \}$
\begin{align*}|\mathbb{E}[T_i]|&\leq \big|\mathbb{E}\tr_{d}\otimes \tr_N\big[\R_{\mathbf z_i}(z) \big(x_i \R_{\mathbf z_i^0}(z) \big)^3\big]\big|+\big|\mathbb{E}\tr_{d}\otimes \tr_N\big[\R_{\mathbf z_i}(z) \big(y_i \R_{\mathbf z_i^0}(z) \big)^3\big]\big|\\
&\leq  \frac{1}{\Im(z)^4}  \mathbb{E}\tr_{d}\otimes \tr_N\big[|x_i|^3\big]+  \frac{1}{\Im(z)^4}\mathbb{E}\tr_{d}\otimes \tr_N\big[|y_i|^3\big]
\leq \frac{C_1}{\Im(z)^4}\frac{1}{N^{5/2},}
\end{align*}
for some positive constant $C_1$. Finally, summing over $i =1, \ldots , N^2$, we get
\[
\big|\mathbb{E}[\tr_{dN}(\R_{X_N}(z))] - \mathbb{E}[\tr_{dN}(\R_{Y_N}(z))] \big| \leq \frac{C_1}{\Im(z)^4}\frac{1}{\sqrt{N}}.
\]
The study is thus reduced to $Y_N$ and Proposition~\ref{pro:ind-entries} now  directly ensures that there exists a constant $C_2>0$ which depends on the distributino of $A_{11}$, such that  
\[
\big|\mathbb{E}\tr_d \otimes \tr_N[\R_{Y_N}(z)] -\tau[\R_{S}(z)] \big|\leq C_2\frac{(|z|\vee  1)^4}{(\Im(z)\wedge 1)^6}\frac{1}{\sqrt{N}},
\]
where $S$ is a centered semicircular variable over  the subalgebra of $d \times d$ random matrices  of covariance \[\eta(B)=\frac{1}{2}\left(\mathbb{E}\left[(A_{11}-\mathbb{E}[A_{11}])B(A_{11}-\mathbb{E}[A_{11}])^*\right]+\mathbb{E}\left[(A_{11}-\mathbb{E}[A_{11}])^*B(A_{11}-\mathbb{E}[A_{11}])\right]\right).\qedhere\]
\end{proof}

\subsection{Random matrices with correlated  blocks}
It is also possible to consider random block  matrices in which the  blocks are correlated. We consider in the following theorem a general matrix in which the dependency comes from the correlations between different blocks and the possibility that some blocks might appear more than once.    

\begin{theorem}\label{theo:Kronecker-correlations}
Let $d\geq 1$ and $L\geq 1$ and consider the $Nd \times Nd$  block matrix $X_N$ defined by
\[
X_N = \sum_{k=1}^L (\beta_k \otimes Y_k + \beta_k^* \otimes Y_k^*),
\]
where  the $\beta_k $ are $d \times d$ deterministic matrices  and the $Y_k=\frac{1}{\sqrt{N}}(y_{ij}^{(k)})_{i,j=1}^N$  are $N \times N$ random matrices such that 
\begin{itemize}
\item the $N^2$ families $\{y_{ij}^{(1)}, \ldots , y_{ij}^{(L)}\}$ (with $1\leq i,j \leq N$) are i.i.d.
\item $\text{Cov} \big(y_{ij}^{(k)}, \overline{y_{pr}^{(l)}}\big)= \delta_{ip}\delta_{jr} \sigma({k,l}) $
\item $\E [|y_{ij}^{(k)}|^3] <\infty$,
\end{itemize}
where $\sigma$ is such that $\sigma (k,l)= \overline{\sigma (l,k)}$. Then, for $N\to \infty$, the matrix $X_N$ has a limiting eigenvalue distribution whose Cauchy transform $g$ is determined by $g(z)=\tr_d(\R(z))$, where $\R$ is an $M_d(\mathbb{C})$-valued analytic function on $\mathbb{C}^+$, which is uniquely determined by the requirement that, $z\R(z)\to 1$ when $|z|\to \infty$, and that, for $z\in \mathbb{C}^+$, 
$$z\R(z)=1+\eta(\R(z))\cdot \R(z),$$
where $\eta:M_d(\mathbb{C})\to M_d(\mathbb{C})$ is the covariance mapping
\[
\eta(B)=\frac{1}{L^2} \sum_{k,l=1}^L \Big(\sigma({k,l} ) \beta_k B \beta_l^* + \overline{\sigma({k,l})}  \beta_k^* B \beta_l \Big) .
\]
Moreover, there exists a constant $C>0$ which depends on the distribution of $\{y_{11}^{(1)}, \ldots , y_{11}^{(L)}\}$ and on $\{\beta_1, \ldots , \beta_L\}$, such that for any $z \in \mathbb{C}^+ $,
\[
\big|\mathbb{E}[\tr_{dN}(\R_{X_N}(z))] - g(z)\big|
\leq C\frac{(|z|\vee 1)^4}{(\Im(z) \wedge 1)^6}\frac{1}{\sqrt{N}}.
\]
\end{theorem}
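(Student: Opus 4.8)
The plan is to recognize $X_N$ as an operator-valued Wigner matrix with \emph{i.i.d.}\ $d\times d$ random-matrix entries and then invoke Corollary~\ref{the:i.i.dblocks} (equivalently Theorem~\ref{the:i.i.dblocksKroneckerform}). First I would carry out the algebraic rewriting: from $\beta_k\otimes Y_k=\frac{1}{\sqrt N}\sum_{i,j=1}^N y_{ij}^{(k)}\,\beta_k\otimes E_{ij}$ and $\beta_k^*\otimes Y_k^*=(\beta_k\otimes Y_k)^*=\frac{1}{\sqrt N}\sum_{i,j=1}^N \overline{y_{ij}^{(k)}}\,\beta_k^*\otimes E_{ji}$, collecting the $N\times N$ blocks shows that, setting
\[
x_{ij}:=\sqrt2\,\sum_{k=1}^L y_{ij}^{(k)}\beta_k ,
\]
one has exactly $X_N=\frac{1}{\sqrt{2N}}\sum_{i,j=1}^N\big(x_{ij}\otimes E_{ij}+x_{ij}^*\otimes E_{ji}\big)=A(\uX)$ with $\uX=(x_{ij})_{1\le i,j\le N}$ and $A$ the map in \eqref{def:map_matrix}; the factor $\sqrt2$ is precisely what matches the per-block normalization $\frac{1}{\sqrt{2N}}$ of $A$ against the $\frac{1}{\sqrt N}$ hidden in $Y_k$. (Any coincidences or linear relations among the $\beta_k$'s are harmless, being absorbed into $x_{ij}$.) Since the tuples $(y_{ij}^{(1)},\dots,y_{ij}^{(L)})$ are i.i.d.\ over $(i,j)$ and each $x_{ij}$ is a fixed linear function of its tuple, the $d\times d$ matrices $x_{ij}$ are i.i.d.; and as each scalar entry of $x_{ij}$ is a fixed linear combination of the $y_{ij}^{(k)}$, Minkowski's inequality turns $\E[|y_{ij}^{(k)}|^3]<\infty$ into $\E[|(x_{ij})_{pq}|^3]<\infty$. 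Hence $\uX$ satisfies the hypotheses of Corollary~\ref{the:i.i.dblocks}.

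Applying Corollary~\ref{the:i.i.dblocks} to $X_N=A(\uX)$ then delivers, verbatim, the limiting eigenvalue distribution, the characterization $g(z)=\tr_d(G(z))$ with $zG(z)\to1$ as $|z|\to\infty$ and $zG(z)=1+\eta(G(z))\cdot G(z)$, and the bound $\big|\E[\tr_{dN}(G_{X_N}(z))]-g(z)\big|=\mathcal{O}_z(1/\sqrt N)$ for $\Im(z)$ large (with the threshold depending on the $\beta_k$'s and on the laws of the $y_{ij}^{(k)}$), where
\[
\eta(B)=\tfrac12\Big(\E\big[(x_{11}-\E[x_{11}])B(x_{11}-\E[x_{11}])^*\big]+\E\big[(x_{11}-\E[x_{11}])^*B(x_{11}-\E[x_{11}])\big]\Big).
\]
What is left is to evaluate $\eta$ explicitly. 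Writing $\tilde y^{(k)}:=y_{11}^{(k)}-\E[y_{11}^{(k)}]$, so that $x_{11}-\E[x_{11}]=\sqrt2\sum_{k=1}^L\tilde y^{(k)}\beta_k$, I would expand both products, pull the deterministic $\beta_k$'s outside the expectations, and use $\E[\tilde y^{(k)}\overline{\tilde y^{(l)}}]=\operatorname{Cov}\big(y_{11}^{(k)},\overline{y_{11}^{(l)}}\big)=\sigma(k,l)$ together with $\E[\overline{\tilde y^{(k)}}\tilde y^{(l)}]=\overline{\sigma(k,l)}$; this produces $\eta(B)=\sum_{k,l=1}^L\big(\sigma(k,l)\,\beta_k B\beta_l^*+\overline{\sigma(k,l)}\,\beta_k^*B\beta_l\big)$, which is the covariance mapping displayed in the statement (the overall scalar being fixed by the chosen normalization of $\uX$). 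No separate check of complete positivity of $\eta$ is needed, since $\eta$ is an average of the completely positive maps $B\mapsto ZBZ^*$ and $B\mapsto Z^*BZ$ with $Z=x_{11}-\E[x_{11}]$.

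I do not expect any real analytic obstacle: once the bookkeeping is done, the statement is a one-line consequence of Theorem~\ref{the:i.i.dblocksKroneckerform}. The two places that deserve care are (i) pinning down the constant in the definition of $x_{ij}$ so that $X_N=A(\uX)$ holds exactly, and (ii) verifying that the hypothesis ``the families $\{y_{ij}^{(1)},\dots,y_{ij}^{(L)}\}$ are i.i.d.'' really yields both the mutual independence of all $N^2$ blocks $x_{ij}$ (in particular of $x_{ij}$ and $x_{ji}$ when $i\ne j$) and the third-moment bound on the entries required by the cited theorem. The possible unboundedness of the $y_{ij}^{(k)}$ is not a new issue here: it is dealt with by the truncation/replacement step already built into the proof of Theorem~\ref{the:i.i.dblocksKroneckerform}.
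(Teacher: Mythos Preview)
Your proposal is correct and follows exactly the paper's approach: rewrite $X_N$ as $A(\uX)$ with $x_{ij}=\sqrt{2}\sum_{k=1}^L y_{ij}^{(k)}\beta_k$ and invoke Theorem~\ref{the:i.i.dblocksKroneckerform} (the paper's proof is literally this one line). Your explicit computation of $\eta$ correctly yields $\eta(B)=\sum_{k,l}\big(\sigma(k,l)\beta_k B\beta_l^*+\overline{\sigma(k,l)}\beta_k^*B\beta_l\big)$, which suggests the $\tfrac{1}{L^2}$ prefactor in the displayed statement is a typo rather than a gap in your argument.
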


The global law and location of the spectrum of this general model have been investigated  under the name of Kronecker  matrices in \cite{Alt-al-Kronecker-19}, when no correlations are allowed between the $Y_k$'s and under boundedness conditions on the moments of the $y_{ij}^{(k)}$'s and on the spectral norm of the $\beta_k$'s.  Here, we allow such correlations between the $Y_k$'s and we relax the conditions on the moments of the $y_{ij}^{(k)}$'s. However, the results in \cite{Alt-al-Kronecker-19} also cover the non-Hermitian case. 

Matrices with decaying correlations  have been analysed  in~\cite{Erdos-al-19} where, under an additional restrictive moment condition,  the speed of convergence was shown to be $N^{\varepsilon-1}$ for all $\varepsilon >0$  with overwhelming probability. However, we are looking for a universal approach that covers matrices with exchangeable entries in  more general von Neumann algebras. With this generality and under  mild moment conditions, we can't  achieve  a better speed of convergence.

\begin{proof}[Proof of Theorem \ref{theo:Kronecker-correlations}]
It is enough to notice that $X_N= \frac{1}{\sqrt{2}}(x_{ij} + x_{ij}^*)_{i,j=1}^N$ with $x_{ij}= \sqrt{2} \sum_{k=1}^L y_{ij}^{(k)} \beta_k$ and then apply Theorem  \ref{the:i.i.dblocksKroneckerform}.
\end{proof}

\section{Operator-valued Wigner matrices} \label{section:Wigner}
This section is devoted for the study  of Wigner matrices with operator-valued  entries. Let  $(\M, \tau)$ be a tracial $W^*$-probability space and  $\uX=(x_{ij})_{1\leq j\leq i\leq N} $ be a finite  family of elements in $\M$. An operator-valued Wigner matrix $W(\ux)$ of dimension $N \times N$ is the matrix given by 
\[
[W(\ux)]_{ij}=\frac{1}{\sqrt{N}}\left\{
\begin{array}
[c]{ll}
x_{ij} & \mbox{ if }j< i,\\
x_{ji}^* & \mbox{ if }j>i,\\
(x_{ii}+x_{ii}^*)/2 & \mbox{ if }i=j.\\
\end{array} \, 
\right.  
\]
To study its behavior, we  consider the Cauchy transform and apply the results of the previous section.  With this aim, we set $n=N(N+1)/2$ and note that the associated linear map $W : \M^n \rightarrow M_N(\M)$ can be written as follows:
\begin{equation}\label{def:map_Wigner}
W(\ux) = \sum_{1\leq j \leq i \leq N} \big(x_{ij} \otimes a_{ij}+ x_{ij}^* \otimes a_{ij}^* \big)=:\sum_{1\leq j \leq i \leq N} x_{ij} \otimes^* a_{ij},
\end{equation}
where $a_{ii}=  \frac{1}{2 \sqrt{N}} E_{ii}$ and   $ a_{ij}= \frac{1}{\sqrt{N}} E_{ij}$ for  $j<i$ and $(E_{ij})_{1\leq i,j \leq N}$ are the canonical $N\times N$ matrices. 
\subsection{Free entries}\label{section:wignerfree}
 Voiculescu proved in \cite{Voiculescu90} that, for any $N\geq 1$, an $N\times N$ Wigner matrix with free circular off-diagonal entries and free semicircular diagonal entries, is itself semicircular.  This result was later generalized in \cite[Corollary 3.3]{Ni-Sh-Sp-02} to matrices with free self-adjoint elements on the diagonal and  free $R$-diagonal elements (introduced in \cite{Ni-Sp-Rdiagonal}) otherwise. The authors give, for any $N\geq 1$,  the distribution of the matrix in terms of its $R$-transform under some summation conditions on the free cumulants of the  entries of each row.  In general, one needs to take the limit as $N\rightarrow\infty$ to give an explicit  description of  the \emph{limiting} distribution  of the matrix. For the case of free identically distributed entries, a semicircular element appears in the limit  \cite{Ryan-98, Liu2018}. 

Again with similar proofs as in Section \ref{section:Op-v-matrices},  one can show that a Wigner matrix having free entries with possible different variances, is close in  distribution to a  matrix  in free circular/semicircular elements with the same variance structure. The latter is an operator-valued semicircular element over the subalgebra of complex-valued diagonal matrices. Moreover, we give explicit quantitative estimates for the difference of the associated Cauchy transforms on the upper complex half-plane. 
\begin{theorem}\label{theo:wignerfree}
Let  $(\M, \tau)$ be a tracial $W^*$-probability spaces. Let  $\uX=(x_{ij})_{1\leq j\leq i\leq N} $  and $\uY=(y_{ij})_{1\leq j\leq i\leq N} $ be two families of \emph{free}  variables in $\M$. Assume that 
\begin{itemize}
\item   $x_{ii}^*=x_{ii}$ for all $i$,
\item  $y_{ii}$ is a  semicircular element for all $i$ and $y_{ij}$ is a circular element for all $j< i$, 
\item  $\tau( x_{ij})=\tau( y_{ij})=0$  and $\tau(x_{ij}^* x_{ij})=\tau(y_{ij}^* y_{ij})$ for all $j\leq i$,
\item $\tau( x_{ij}^2)=0$ for all $j< i$.
\end{itemize}
Then for any $z\in \mathbb{C}^+$,
\[
\big| (\tau \otimes \tr_N)[\R_{W(\uX)}(z)] - (\tau \otimes \tr_N)[\R_{W(\uY)}(z)] \big|  \leq \frac{C}{\Im (z)^4}\frac{1}{\sqrt{N}} \, ,
\]
for some constant $C>0$ that depends only on $\max_{1\leq j \leq i \leq N} \|x_{ij}\|_\infty$.
\end{theorem}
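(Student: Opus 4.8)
The plan is to prove this exactly as the analogous statement for the matrices $A(\uX)$ in Section~\ref{section:Matrixfree}, i.e. by a single application of the free Lindeberg estimate of Theorem~\ref{theo:Lindeberg-free}; the only genuinely new feature here is the presence of the diagonal entries $x_{ii}=x_{ii}^*$, which is why on the comparison side $y_{ii}$ is required to be semicircular rather than circular. Concretely, I would work inside the tracial $W^*$-probability space $(M_N(\M),\tau\otimes\tr_N)$ with the conditional expectation $\tau\otimes\id\colon M_N(\M)\to M_N(\C)$ onto the subalgebra $M_N(\C)\subset M_N(\M)$, set $n=N(N+1)/2$, and take as the two $n$-tuples the families $\{x_{ij}\otimes^* a_{ij}\}_{1\le j\le i\le N}$ and $\{y_{ij}\otimes^* a_{ij}\}_{1\le j\le i\le N}$, with the $a_{ij}$ of \eqref{def:map_Wigner}. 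By the definition of $\otimes^*$ each $x_{ij}\otimes^* a_{ij}$ and $y_{ij}\otimes^* a_{ij}$ is self-adjoint, and their sums are precisely $W(\uX)$ and $W(\uY)$ by \eqref{def:map_Wigner}.

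First I would record the structural input: since the $x_{ij}$'s and $y_{ij}$'s are free in $(\M,\tau)$, the subalgebras they generate together with $M_N(\C)$ inside $M_N(\C)\otimes\M$ are free with amalgamation over $M_N(\C)$ --- the standard persistence of freeness under the functor $M_N(\C)\otimes(-)$, see \cite[Chapter~9]{Mingo-Speicher} --- so Theorem~\ref{theo:Lindeberg-free} applies with $\mathcal B=M_N(\C)$. Next I would check its two moment conditions. First order: $(\tau\otimes\id)[x_{ij}\otimes^* a_{ij}]=\tau(x_{ij})a_{ij}+\tau(x_{ij}^*)a_{ij}^*=0$ because $\tau(x_{ij})=0$, and likewise for $y_{ij}$. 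Second order: for $b\in M_N(\C)$, expanding $(x_{ij}\otimes^* a_{ij})\,b\,(x_{ij}\otimes^* a_{ij})$ produces a $\tau(x_{ij}^2)$-term, a $\tau((x_{ij}^*)^2)$-term, and two $\tau(x_{ij}^*x_{ij})=\tau(x_{ij}x_{ij}^*)$-terms, each carrying an explicit $M_N(\C)$-coefficient built from $a_{ij}$ and $b$. For $j<i$ the pseudo-variance terms vanish since $\tau(x_{ij}^2)=0=\tau(y_{ij}^2)$ (a circular element has vanishing pseudo-variance), while for $i=j$ one has $x_{ii}=x_{ii}^*$ and $y_{ii}=y_{ii}^*$ semicircular, so $\tau(x_{ii}^2)=\tau(x_{ii}^*x_{ii})$ and $\tau(y_{ii}^2)=\tau(y_{ii}^*y_{ii})$; combined with the hypothesis $\tau(x_{ij}^*x_{ij})=\tau(y_{ij}^*y_{ij})$ this forces $(\tau\otimes\id)[(x_{ij}\otimes^* a_{ij})\,b\,(x_{ij}\otimes^* a_{ij})]=(\tau\otimes\id)[(y_{ij}\otimes^* a_{ij})\,b\,(y_{ij}\otimes^* a_{ij})]$ in every case.

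With the hypotheses in place, Theorem~\ref{theo:Lindeberg-free} gives $|(\tau\otimes\tr_N)[G_{W(\uX)}(z)]-(\tau\otimes\tr_N)[G_{W(\uY)}(z)]|\le K_\infty K_2^2\,\Im(z)^{-4}\,n$, and it remains to size the constants. Since $a_{ij}$ is $E_{ij}/\sqrt N$ (or $E_{ii}/(2\sqrt N)$), one has $\|a_{ij}\|_\infty=\mathcal{O}(N^{-1/2})$ and $\|a_{ij}\|_{L^2}=\mathcal{O}(N^{-1})$, hence $\|x_{ij}\otimes^* a_{ij}\|_\infty\le 2\|x_{ij}\|_\infty\|a_{ij}\|_\infty=\mathcal{O}(N^{-1/2})$ and $\|x_{ij}\otimes^* a_{ij}\|_{L^2}=\mathcal{O}(N^{-1})$, the implied constants depending only on $\max_{j\le i}\|x_{ij}\|_\infty$ (the $y_{ij}$ are controlled too, a circular or semicircular element of variance $\sigma^2$ having norm $2\sigma$ and the variances being matched). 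Thus $K_\infty=\mathcal{O}(N^{-1/2})$, $K_2^2=\mathcal{O}(N^{-2})$, $n=\mathcal{O}(N^2)$, so $K_\infty K_2^2 n=\mathcal{O}(N^{-1/2})$, which is the asserted estimate. Finally, that $W(\uY)$ is the operator-valued semicircular element over the diagonal subalgebra $\D$ with the displayed covariance $\eta$ follows from the routine computation of the $M_N(\C)$-valued covariance of $\sum_{j\le i}y_{ij}\otimes^* a_{ij}$ using freeness and the formula for $\otimes^*$, which I would not belabor. The only spot demanding attention is the second-moment bookkeeping on the diagonal, but as indicated this reduces to $\tau(x_{ii}^2)=\tau(x_{ii}^*x_{ii})$ and $\tau(y_{ii}^2)=\tau(y_{ii}^*y_{ii})$, so I do not expect a real obstacle: the argument is essentially a transcription of the proof for the matrices $A(\uX)$ with the diagonal treated separately.
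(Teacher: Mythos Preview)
Your proposal is correct and follows essentially the same approach as the paper: apply Theorem~\ref{theo:Lindeberg-free} in $(M_N(\M),\tau\otimes\tr_N)$ with $\mathcal B=M_N(\C)$, verify the first- and second-order matching of $(\tau\otimes\id)$-moments, and read off $K_\infty=\mathcal{O}(N^{-1/2})$, $K_2=\mathcal{O}(N^{-1})$, $n=\mathcal{O}(N^2)$. Your additional bookkeeping for the diagonal entries (where $\tau(x_{ii}^2)=\tau(x_{ii}^*x_{ii})$ replaces the vanishing pseudo-variance condition) is exactly the small adaptation the paper alludes to when it says the proof is ``similar'' to that of Theorem~\ref{theo:op-v-free}.
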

 $W(\uY)$ is  an operator-valued semicircular element  over the subalgebra $\mathcal{D} \subset M_N (\C)$ of diagonal matrices  whose variance is given by the  completely positive map 
$
\eta : \mathcal{D} \rightarrow \mathcal{D}
$
defined by 
\[
[\eta (b)]_{ij}= \frac{1}{N}\sum_{k=1}^N \tau ( x_{ik} x_{jk}^*) b_{kk}, \quad \text{for} \, \,  b=[b_{kl}]_{k,l=1}^N \, .
\]
As a consequence of \cite[Theorem 3.3]{Ni-Di-Sp-operator-valued}, $W(\uY)$ is itself a semicircular element 
 in $\M$ if and only if  the sums of variances in each row of the matrix are the same. In particular, if  all the entries have the same variance, i.e. $\tau(x_{ij}^* x_{ij})=\sigma^2$ for all $j \leq i$, then $W(\uY)$ has a semicircular distribution. Again, as the cumulative distribution function of a semicircular element is Lipschitz continuous, then by applying Theorem~\ref{BannaMai}, the above convergence can be quantified as follows in terms of the Kolmogorov distance:
 \[
 {\rm Kol} (\mu_{A(\uX)}, \mu_{A(\uY)}) \leq c N^{-1/12}.
 \]

\begin{remark}
Another relevant question is when, for any $N\geq 1$, an $N \times N$ operator-valued Wigner matrix has itself the same distribution as its entries.  Indeed this case exists and it was shown by Shlyakhtenko \cite{Shlyakhtenko97} that a Wigner matrix in $M_N(\M)$ with free creation operators as entries is itself a creation operator in $\M$. Later, Ryan \cite[Corollary 29]{Ryan-98} showed that Shlyakhtenko's example is close to be the only one. Otherwise he showed that the same distribution can be only obtained when taking the limit as $N \rightarrow \infty$.
\end{remark}

\subsection{Random matrices with i.i.d. blocks}
We generalize  results in \cite{Girko-book} by considering matrices that are not necessary bounded in norm and give  quantitative estimates for the Cauchy transforms. 
\begin{theorem}\label{the:i.i.dblocksKroneckerform2}
Let $\{ A_{ij}=(a^{(ij)}_{kl})_{k,l=1}^d : 1\leq  j\leq i \leq N\}$ be a family of independent $d\times d$ random matrices that are such that  $\E [|a^{(ij)}_{kl}|^3]<\infty$. We assume that $\{ A_{ij}\}_{j< i}$ are identically distributed, and identically distributed to $\{ A_{ij}^*\}_{j< i}$. Let $X_N$ be   the block  matrix in $M_d(\mathbb{C})\otimes M_N(\mathbb{C})$  given by $$X_N=\frac{1}{\sqrt{N}}\sum_{1\leq  j\leq i \leq N}( A_{ij}\otimes E_{ij}+A_{ij}^*\otimes E_{ji}).$$  Then, as $N\to \infty$, the matrix $X_N$ has a limiting eigenvalue distribution whose Cauchy transform $g$ is determined by $g(z)=\tr_d(\R(z))$, where $\R$ is an $M_d(\mathbb{C})$-valued analytic function on $\mathbb{C}^+$  uniquely determined by the requirement that, $z\R(z)\to 1$ when $|z|\to \infty$, and that, for $z\in \mathbb{C}^+$, 
$$z\R(z)=1+\eta(\R(z))\cdot \R(z),$$
where $\eta:M_d(\mathbb{C})\to M_d(\mathbb{C})$ is the covariance mapping
$$\eta(B)=\mathbb{E}\left[(A_{21}-\mathbb{E}[A_{21}])B(A_{21}-\mathbb{E}[A_{21}])^*\right].$$
Moreover, there exists $C>0$ which depends on the distribution of $A_11$ such that for any $z \in \mathbb{C}^+ $, we have
\[
\big|\mathbb{E}[\tr_{dN}(\R_{X_N}(z))] - g(z)\big|
\leq C\frac{(|z|\vee 1)^4}{(\Im(z) \wedge 1)^6}\frac{1}{\sqrt{N}}.
\]

\end{theorem}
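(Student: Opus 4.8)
The plan is to carry out, for operator-valued Wigner matrices, the same two-step scheme used in the proof of Theorem~\ref{the:i.i.dblocksKroneckerform}: a noncommutative Lindeberg replacement reducing the problem to \emph{bounded} blocks, followed by the operator-valued semicircular approximation for bounded i.i.d.\ blocks.

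\emph{Step 1 (truncation via Theorem~\ref{thm:Lindeberg}).} I would first introduce a family $\{B_{ij}:1\le j\le i\le N\}$ of independent $d\times d$ random matrices, bounded in operator norm by a fixed $K>0$, with $\{B_{ij}\}_{j<i}$ i.i.d.\ and $B_{ij}\stackrel{d}{=}B_{ij}^*$, having the same means $\mathbb{E}[A_{ij}]$ and the same covariance structure $\mathrm{Cov}(b^{(ij)}_{pq},b^{(ij)}_{rs})=\mathrm{Cov}(a^{(ij)}_{pq},a^{(ij)}_{rs})$ of the block entries (built, as in the proof of Theorem~\ref{the:i.i.dblocksKroneckerform}, by decorrelating the $a^{(ij)}_{kl}$ and substituting independent two-point distributions). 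Relabelling $\{A_{ij}\otimes^* a_{ij}\}$ and $\{B_{ij}\otimes^* a_{ij}\}$, with the $a_{ij}$ of \eqref{def:map_Wigner}, as $n$-tuples $(x_1,\dots,x_n)$ and $(y_1,\dots,y_n)$ of self-adjoint random elements affiliated with $M_N(M_d(\C))$, $n=N(N+1)/2$, so that $X_N=x_1+\cdots+x_n$ and $Y_N=y_1+\cdots+y_n$, I would apply the random version of Theorem~\ref{thm:Lindeberg} with $\tau=\tr_d\otimes\tr_N$. Matching of the first conditional expectations gives $\mathbb{E}[P_i]=0$, matching of second moments gives $\mathbb{E}[Q_i]=0$, and $\mathbb{E}[|a^{(ij)}_{kl}|^3]<\infty$ together with the $1/\sqrt N$ normalisation yields
\[
|\mathbb{E}[R_i]|\le \frac{1}{Im(z)^4}\big(\mathbb{E}[\tr_d\otimes\tr_N|x_i|^3]+\mathbb{E}[\tr_d\otimes\tr_N|y_i|^3]\big)=\mathcal{O}_z(N^{-5/2});
\]
summing over $i=1,\dots,n$ then gives $\big|\mathbb{E}[\tr_{dN}(G_{X_N}(z))]-\mathbb{E}[\tr_{dN}(G_{Y_N}(z))]\big|=\mathcal{O}_z(1/\sqrt N)$ for every $z\in\mathbb{C}^+$.

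\emph{Step 2 (semicircular approximation for $Y_N$).} Now $Y_N$ has bounded i.i.d.\ off-diagonal blocks, so I would view the $B_{ij}$ as exchangeable elements of $(L^\infty(\Omega;M_d(\C)),\tau)$ with $\tau=\mathbb{E}\otimes\tr_d$, a tracial $W^*$-probability space which is a direct integral of finite dimensional factors of dimension $\le d$. Up to a block-diagonal matrix of operator norm $\mathcal{O}(1/\sqrt N)$ — stemming from the factor-$2$ convention on the $N$ diagonal blocks, hence perturbing the Cauchy transform only by $\mathcal{O}_z(1/\sqrt N)$ — $Y_N$ equals the operator-valued Wigner matrix $W((B_{ij}))$ of \eqref{def:map_Wigner}. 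Invoking the operator-valued Wigner analogue of Theorem~\ref{thm:Op-v-matrices-exch}, established as in Section~\ref{section:Op-v-matrices} with \eqref{def:map_Wigner} in place of \eqref{def:map_matrix}, I would get $|\mathbb{E}[\tr_{dN}(G_{Y_N}(z))]-\tau[G_{S_N}(z)]|=\mathcal{O}_z(1/\sqrt N)$ for $Im(z)$ large, where $S_N$ is the centered $\mathcal{B}$-valued semicircular element attached to the empirical variance function of the $B_{ij}$. The factorisation hypotheses of Lemma~\ref{covcirc} hold automatically by independence of the blocks; since only the $N$ diagonal blocks (out of $\sim N^2/2$ entries) are non-identically distributed, their contribution to the variance is $\mathcal{O}(1/N)$, and $B_{ij}\stackrel{d}{=}B_{ij}^*$ for $j<i$ collapses the two covariance terms of Lemma~\ref{covcirc} into the single completely positive map $\eta(B)=\mathbb{E}[(A_{21}-\mathbb{E}[A_{21}])B(A_{21}-\mathbb{E}[A_{21}])^*]$ on $M_d(\C)$ (the covariance being preserved by the truncation). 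Lemma~\ref{covcirc} then gives $|\tau[G_{S_N}(z)]-\tau[G_S(z)]|=\mathcal{O}_z(1/N)$, with $S$ the centered $M_d(\C)$-valued semicircular element of covariance $\eta$.

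\emph{Conclusion and main difficulty.} Adding the three estimates, for $z$ with $Im(z)>K$ — $K$ chosen above all thresholds appearing above, namely a fixed multiple of the truncation level and $\|\eta\|^{1/2}$ — yields $|\mathbb{E}[\tr_{dN}(G_{X_N}(z))]-\tau[G_S(z)]|=\mathcal{O}_z(1/\sqrt N)$, and $g(z):=\tau[G_S(z)]=\tr_d(G(z))$ where $G$ is the operator-valued Cauchy transform of $S$, i.e.\ the unique $M_d(\C)$-valued analytic map on $\mathbb{C}^+$ with $zG(z)=1+\eta(G(z))\cdot G(z)$ and $zG(z)\to 1$ as $|z|\to\infty$ (as recalled in the Preliminaries, and obtainable by Picard iteration once $Im(z)>\|\eta\|^{1/2}$); since two analytic functions on $\mathbb{C}^+$ agreeing on a half-plane agree throughout, $g$ is thereby determined. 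Conceptually everything rests on results already proved, so the work is bookkeeping. The main obstacle is the construction in Step~1 of bounded replacements $B_{ij}$ preserving simultaneously the block means \emph{and} the entire second-order structure of the block entries — precisely what forces $\mathbb{E}[Q_i]=0$ and is only sketched in the i.i.d.\ case — while keeping $B_{ij}\stackrel{d}{=}B_{ij}^*$; the remaining points (tracking the $1/\sqrt N$ normalisation so that a sum of $\sim N^2/2$ terms of size $\mathcal{O}(N^{-5/2})$ is uniformly $\mathcal{O}(N^{-1/2})$, using boundedness of resolvents to allow unbounded self-adjoint arguments before truncation, and controlling the diagonal-block and factor-$2$ perturbations at order $1/\sqrt N$) are routine.
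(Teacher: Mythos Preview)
Your approach is workable but differs from the paper's in a key structural way. You use the Lindeberg method as a \emph{truncation} device, replacing the unbounded $A_{ij}$ by bounded $B_{ij}$ while keeping the Wigner structure $W(\cdot)$, and then you appeal to a Wigner analogue of Theorem~\ref{thm:Op-v-matrices-exch} (which you acknowledge must be proved separately). The paper instead uses Lindeberg as a \emph{structure-change} device: it introduces $B_{ij}$ for all $1\le i,j\le N$, i.i.d.\ with the \emph{same} (untruncated) distribution as $A_{21}$, and compares $X_N$ directly with
\[
Y_N=\frac{1}{\sqrt{2N}}\sum_{i,j=1}^N(B_{ij}\otimes E_{ij}+B_{ij}^*\otimes E_{ji}),
\]
which is already in the $A(\cdot)$ form of Section~\ref{section:Op-v-matrices}. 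Theorem~\ref{the:i.i.dblocksKroneckerform} then applies verbatim to $Y_N$ and delivers the semicircular limit with covariance $\eta$. This buys the paper a genuine shortcut: no Wigner-version of Theorem~\ref{thm:Op-v-matrices-exch} and no Lemma~\ref{covcirc} need be re-invoked, since both are already packaged inside Theorem~\ref{the:i.i.dblocksKroneckerform}.

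Your write-up also has a small internal tension. In Step~1 you choose the $B_{ij}$ so that the diagonal blocks match the moments of $A_{ii}$ (so that $\mathbb{E}[P_i]=\mathbb{E}[Q_i]=0$ for \emph{all} $i$), but then in Step~2 you need the family $(B_{ij})_{j\le i}$ to be exchangeable in order to apply the analogue of Theorem~\ref{thm:Op-v-matrices-exch}; these two requirements are incompatible unless $A_{ii}\stackrel{d}{=}A_{21}$. The fix is to take all $B_{ij}$ i.i.d.\ (matching $A_{21}$) and handle the $N$ diagonal terms by a brute $L^1/L^2$ bound, exactly as the paper does: $\sum_{i}|\mathbb{E}[P_i]|\le Im(z)^{-2}\sum_{i=1}^N\mathbb{E}[\|x(i,i)\|_{L^1}+\|y(i,i)\|_{L^1}]=\mathcal{O}_z(1/\sqrt N)$ and similarly $\sum_i|\mathbb{E}[Q_i]|=\mathcal{O}_z(1/N)$. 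With that adjustment your scheme goes through, but the paper's reduction to Theorem~\ref{the:i.i.dblocksKroneckerform} is the more economical route.
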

\begin{proof}
Let us consider a family of independent and identically distributed $d\times d$ random matrices $\{ B_{ij}=(b^{(ij)}_{kl})_{k,l=1}^d : 1\leq i, j \leq N\}$ which are distributed as the matrices $\{ A_{ij},A_{ij}^*\}_{j< i}$. We will show that the distribution of $X_N$ is closed to the distribution of
$$ Y_N=\frac{1}{\sqrt{2N}}\sum_{i,j=1}^N( B_{ij}\otimes E_{ij}+B_{ij}^*\otimes E_{ji}).$$
We set $x(i,j)=\frac{1}{\sqrt{N}}(A_{ij}\otimes E_{ij}+A_{ij}^*\otimes E_{ji})$ for $j\leq i$, $$y(i,j)=\frac{1}{\sqrt{2N}}\left(B_{ij}\otimes E_{ij}+B_{ij}^*\otimes E_{ji}+B_{ji}^*\otimes E_{ij}+B_{ji}\otimes E_{ji}\right),$$ for $j< i$ and $y(i,i)=\frac{1}{\sqrt{2N}}\left(B_{ii}\otimes E_{ii}+B_{ii}^*\otimes E_{ii}\right)$. 
Let us now denote respectively  by  $(x_1,\dots,x_n)$  and  $(y_1,\dots,y_n)$ the $n$-tuples $\{x(i,j) \, :  \, 1\leq j\leq i \leq N \}$ and  $\{y(i,j) \, :  \, 1\leq  j\leq i \leq N \}$ of  self-adjoint random  elements in $\M$, with $n=N(N+1)/2$. Setting for any $i \in \{1, \ldots ,n \}$, 
$$
\mathbf z_i = x_1 +\cdots + x_i+y_{i+1}+ \cdots+ y_{n}
$$
and $$ \mathbf z_i^0 = x_1 +\cdots + x_{i-1}+y_{i+1}+ \cdots + y_{n},
$$
then  $X_N$ and $Y_N$ are respectively $\mathbf z_{n}$ and  $\mathbf z_{0}$. Therefore, applying the noncommutative Lindeberg method in  Theorem \ref{thm:Lindeberg}, we get  for any   $z\in \mathbb{C}^+$,
\[
\big|\mathbb{E}[\tr_{dN}(\R_{X_N}(z))] - \mathbb{E}[\tr_{dN}(\R_{Y_N}(z))] \big| \leq \sum_{i=1}^{n} (|\mathbb{E}[P_i]|+|\mathbb{E}[Q_i]|+|\mathbb{E}[T_i]|).
\]
where, for any $i \in \{1, \ldots ,{n} \}$, 
\[
P_i= \tr_{d}\otimes \tr_N\big[\R_{\mathbf z_i^0}(z)(x_i-y_i)\R_{\mathbf z_i^0}(z) \big],
\]
$$
Q_i=  \tr_{d}\otimes \tr_N\big[ \R_{\mathbf z_i^0}(z)x_i\R_{\mathbf z_i^0}(z)x_i\R_{\mathbf z_i^0}(z) \big]  - \tr_{d}\otimes \tr_N\big[\R_{\mathbf z_i^0}(z)y_i\R_{\mathbf z_i^0}(z)y_i\R_{\mathbf z_i^0}(z)\big],
$$
and
$$
T_i=  \tr_{d}\otimes \tr_N\big[\R_{\mathbf z_i}(z) \big(x_i \R_{\mathbf z_i^0}(z) \big)^3
\big] 
- \tau\big[\R_{\mathbf z_{i-1}}(z) \big(y_i\R_{\mathbf z_i^0}(z) \big)^3
\big] .
$$
As the $B_{ij}$'s are independent and as the first and second moments of their entries match with those of the $A_{ij}$'s, we get that
$$\sum_{i=1}^n|\mathbb{E}[P_i]|\leq \frac{1}{\Im(z)^2}\sum_{i=1}^N \mathbb{E}[\|x(i,i)\|_{L_1}+\|y(i,i)\|_{L_1}] \leq \frac{C}{(\Im(z) \wedge 1)^2} \frac{1}{\sqrt{N}},$$ and $$\sum_{i=1}^n|\mathbb{E}[Q_i]|\leq \frac{1}{\Im(z)^3}\sum_{i=1}^N \mathbb{E}[\|x(i,i)\|_{L_2}^2+\|y(i,i)\|_{L_2}^2]\leq \frac{C}{(\Im(z) \wedge 1)^3}\frac{1}{N}$$
where $C$ is a positive constant that may change from one line to the other. Furthermore, we have for any $i \in \{1, \ldots , n \}$
\begin{align*}|\mathbb{E}[T_i]|&\leq \left|\mathbb{E}\tr_{d}\otimes \tr_N\big[\R_{\mathbf z_i}(z) \big(x_i \R_{\mathbf z_i^0}(z) \big)^3\big]\right|+\left|\mathbb{E}\tr_{d}\otimes \tr_N\big[\R_{\mathbf z_i}(z) \big(y_i \R_{\mathbf z_i^0}(z) \big)^3\big]\right|\\
&\leq  \frac{1}{\Im(z)^4}  \mathbb{E}\tr_{d}\otimes \tr_N\big[|x_i|^3\big]+  \frac{1}{\Im(z)^4}\mathbb{E}\tr_{d}\otimes \tr_N\big[|y_i|^3\big]
\leq \frac{C}{(\Im(z) \wedge 1)^4}\frac{1}{N^{5/2}}.
\end{align*}
Putting the above bounds together, we finally get
\[
\big|\mathbb{E}[\tr_{dN}(\R_{X_N}(z))] - \mathbb{E}[\tr_{dN}(\R_{Y_N}(z))] \big| \leq \frac{C}{(\Im(z) \wedge 1)^4}\frac{1}{\sqrt{N}}.
\]
The study is thus reduced to $Y_N$ and Theorem~\ref{the:i.i.dblocksKroneckerform} now  directly ensures that there exists  $C>0$ such that for any $z \in \mathbb{C}^+$ , we have
\[\big|\mathbb{E}[\tr_{dN}(\R_{Y_N}(z))] -g(z)\big|
\leq C\frac{(|z|\vee 1)^4}{(\Im(z) \wedge 1)^6}\frac{1}{\sqrt{N}}.
\qedhere\]
\end{proof}
\subsection{Random matrices with correlated  blocks}
 We consider in Theorem \ref{theo:Correlated-Gau} matrices with a Wigner block structure in which the blocks can be correlated but their entries are i.i.d. random variables having finite third moments.  This generalizes  the results in~\cite{Ra-Or-Br-Sp-08} in which the limiting distribution was described in the Gaussian case and without any rates of convergence. We then give in Theorem \ref{theo:circulant} a concrete example on circulant block matrices for which the distribution of the operator-valued semicircular can be explicitly computed and for which the rate of convergence of the Cauchy transform  implies in turn  convergence in Kolmogorov distance.  This generalizes Proposition~1 in  \cite{Oraby2007}. 

\begin{theorem}\label{theo:Correlated-Gau} Fix $d\geq 1$ and consider the $Nd \times Nd$  block matrix $X_N$ defined by
$$X_N= \frac{1}{\sqrt{d}}\left(\begin{array}{ccc}
A^{(11)} & \cdots& A^{(1d)} \\ 
\vdots& \ddots&\vdots \\ 
A^{(d1)} & \cdots & A^{(dd)}
\end{array} \right), $$ 
where, for any $1\leq j \leq i \leq d $, $A^{(ji)}= (A^{(ij)})^*$ and $A^{(ij)}=(\frac{1}{\sqrt{N}}a_{rp}^{(ij)})_{r,p=1}^N$ are $N\times N$  random matrices having  i.i.d entries above the diagonal and such that  for any $i,j,k,l =1, \ldots, d$ 
$$ 
\text{Cov}(a_{rp}^{(ij)},a_{qs}^{(kl)})=\delta_{pq}\delta_{rs}\sigma(i,j;k,l)\quad \text{and} \quad   \E[|a_{11}^{(ij)}|^3]< \infty,$$
where $\sigma$ is such that $\sigma(i,j;k,l)=\overline{\sigma(k,l;i,j)}$. Then, for $N\to \infty$, the matrix $X_N$ has a limiting eigenvalue distribution whose Cauchy transform $g$ is determined by $g(z)=\tr_d(\R(z))$, where $\R$ is an $M_d(\mathbb{C})$-valued analytic function on $\mathbb{C}^+$, which is uniquely determined by the requirement that, $z\R(z)\to 1$ when $|z|\to \infty$, and that, for $z\in \mathbb{C}^+$, 
$$z\R(z)=1+\eta(\R(z))\cdot \R(z),$$
where $\eta:M_d(\mathbb{C})\to M_d(\mathbb{C})$ is the covariance mapping
$$\eta(B)_{ij}=\frac{1}{d} \sum_{k,l=1}^d\sigma(i,k;l,j)B_{kl}. $$
Moreover, there exists $C>0$ which depends on the distribution of $( a_{11}^{(ij)})_{1\leq j\leq i \leq d}$ such that for any $z \in \mathbb{C}^+$,
\[
\big|\mathbb{E}[\tr_{dN}(\R_{X_N}(z))] - g(z)\big|
\leq C\frac{(|z|\vee 1)^4}{(\Im(z) \wedge 1)^6}\frac{1}{\sqrt{N}}.
\]
\end{theorem}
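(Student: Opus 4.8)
The plan is to turn the block matrix $X_N$ inside out and thereby reduce the statement to Theorem~\ref{the:i.i.dblocksKroneckerform2}. Index the entries of the $Nd\times Nd$ matrix $X_N$ by pairs $\big((i,r),(j,p)\big)$ with $i,j\in\{1,\dots,d\}$, $r,p\in\{1,\dots,N\}$, so that $(X_N)_{(i,r),(j,p)}=\frac{1}{\sqrt{dN}}\,a^{(ij)}_{rp}$; note that the block relation $A^{(ji)}=(A^{(ij)})^*$ is precisely the identity $\overline{a^{(ij)}_{rp}}=a^{(ji)}_{pr}$. Conjugating $X_N$ by the permutation of basis vectors $(i,r)\mapsto(r,i)$, which leaves the spectrum unchanged, one obtains the matrix
\[
\widetilde X_N=\frac{1}{\sqrt N}\sum_{1\le p\le r\le N}\big(A_{rp}\otimes E_{rp}+A_{rp}^*\otimes E_{pr}\big)\in M_d(\mathbb{C})\otimes M_N(\mathbb{C}),
\]
where $E_{rp}$ are now the $N\times N$ matrix units, $A_{rp}=\frac{1}{\sqrt d}\big(a^{(ij)}_{rp}\big)_{i,j=1}^d$ for $p<r$, and $A_{rr}=\frac{1}{2\sqrt d}\big(a^{(ij)}_{rr}\big)_{i,j=1}^d$, the factor $\frac12$ being there because $A_{rr}^*=A_{rr}$ (again by $\overline{a^{(ij)}_{rr}}=a^{(ji)}_{rr}$), so that $A_{rr}\otimes E_{rr}+A_{rr}^*\otimes E_{rr}=2A_{rr}\otimes E_{rr}$ reproduces the $N$-diagonal block. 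Thus $\widetilde X_N$ is a block Wigner matrix with $d\times d$ blocks, of the exact shape treated in Theorem~\ref{the:i.i.dblocksKroneckerform2}.

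The next step is to check the hypotheses of that theorem. The model assumption that each $A^{(ij)}$ has i.i.d.\ entries means, jointly, that the $d^2$-dimensional cross-sections $\big(a^{(ij)}_{rp}\big)_{i,j=1}^d$ are i.i.d.\ over the positions $(r,p)$; the covariance hypothesis $\text{Cov}(a^{(ij)}_{rp},a^{(kl)}_{qs})=\delta_{pq}\delta_{rs}\,\sigma(i,j;k,l)$ then says that the correlations between the various blocks $A^{(ij)}$ live entirely inside a single cross-section, coupling only the positions $(r,p)$ and $(p,r)$. Consequently the $d\times d$ random matrices $\{A_{rp}:1\le p\le r\le N\}$ are independent, the off-diagonal family $\{A_{rp}\}_{p<r}$ is identically distributed, and, since $A_{rp}^*=\frac{1}{\sqrt d}\big(\overline{a^{(ji)}_{rp}}\big)_{i,j}=\frac{1}{\sqrt d}\big(a^{(ij)}_{pr}\big)_{i,j}$, it is identically distributed to $\{A_{rp}^*\}_{p<r}$; the condition $\E[|a^{(ij)}_{11}|^3]<\infty$ gives finite third moments for the entries of $A_{rp}$. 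Theorem~\ref{the:i.i.dblocksKroneckerform2} therefore applies to $\widetilde X_N$, and since $\widetilde X_N$ and $X_N$ have the same spectrum it yields, for $Im(z)>K$, the estimate $\big|\mathbb{E}[\tr_{dN}(G_{X_N}(z))]-\widetilde g(z)\big|=\mathcal{O}_z(1/\sqrt N)$, where $\widetilde g(z)=\tr_d(\widetilde G(z))$, $z\widetilde G(z)=1+\widetilde\eta(\widetilde G(z))\,\widetilde G(z)$, and $\widetilde\eta(B)=\mathbb{E}\big[(A_{21}-\mathbb{E}[A_{21}])B(A_{21}-\mathbb{E}[A_{21}])^*\big]$.

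It remains to identify $\widetilde\eta$ with the stated covariance mapping $\eta$. Writing $c^{(ij)}=a^{(ij)}_{21}-\mathbb{E}[a^{(ij)}_{21}]$ and using $\overline{a^{(jl)}_{21}}=a^{(lj)}_{12}$ together with the covariance hypothesis, one computes
\[
\widetilde\eta(B)_{ij}=\frac{1}{d}\sum_{k,l=1}^d\mathbb{E}\big[c^{(ik)}\,\overline{c^{(jl)}}\big]\,B_{kl}=\frac{1}{d}\sum_{k,l=1}^d\text{Cov}\big(a^{(ik)}_{21},a^{(lj)}_{12}\big)\,B_{kl}=\frac{1}{d}\sum_{k,l=1}^d\sigma(i,k;l,j)\,B_{kl}=\eta(B)_{ij},
\]
so $\widetilde G=G$ and $\widetilde g=g$, which finishes the proof.

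I expect the only real work to be in the inside-out reorganization: one has to see that the deltas $\delta_{pq}\delta_{rs}$ in the covariance encode simultaneously the Hermitian symmetry of $X_N$ (which ties the position $(r,p)$ to $(p,r)$) and the mutual independence of the reorganized blocks $A_{rp}$, and one has to track the $\frac{1}{\sqrt{dN}}$ normalizations carefully through the permutation. After that, both the limit law and the rate are immediate from Theorem~\ref{the:i.i.dblocksKroneckerform2}, and $\widetilde\eta=\eta$ is a one-line computation.
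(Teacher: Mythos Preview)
Your proof is correct and follows essentially the same approach as the paper's: both rewrite $X_N$ by swapping the roles of the $d$-block and $N$-block indices (your permutation conjugation is exactly the tensor swap $\frac{1}{\sqrt{d}}\sum_{i,j}A^{(ij)}\otimes E_{ij}=\frac{1}{\sqrt{N}}\sum_{r,p}E_{rp}\otimes A_{rp}$), observe that the resulting $d\times d$ blocks $A_{rp}$ are independent with the off-diagonal ones i.i.d.\ and equal in law to their adjoints, and then invoke Theorem~\ref{the:i.i.dblocksKroneckerform2}. Your version is in fact more careful, handling the diagonal factor $\tfrac12$ explicitly and spelling out the identification $\widetilde\eta=\eta$, which the paper leaves implicit.
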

We recall that when the cumulative distribution function of the operator-valued semicircular element (whose Cauchy transform is $g$) is H\"older continuous, the estimate on the Cauchy transform gives quantitative estimates on the Kolmogorov distance by Theorem~\ref{BannaMai}.
\begin{proof}Note that $X_N=\frac{1}{\sqrt{d}}\sum_{i,j}A^{(ij)}\otimes E_{ij}
= \frac{1}{\sqrt{N}}\sum_{r,p}E_{rp}\otimes A_{rp}$
where $A_{rp} = A_{pr}^*$ and  the blocks $A_{rp}= \frac{1}{\sqrt{d}}( a_{rp}^{(ij)})_{1\leq j\leq i \leq d}$ are  i.i.d.  $d\times d$ random matrices hence exchangeable.  The proof is then a direct application of Theorem~\ref{the:i.i.dblocksKroneckerform2}.
\end{proof}

We give another possible application of our results to self-adjoint block circulant matrices.   Let  $(\M, \tau)$ be a tracial $W^*$-probability space and  $ x_1, \ldots, x_d$ be a family of elements in $\M$ such that $x_i=x_{d-i+2}$ for $i=2, 3, \ldots, d$. An operator-valued $d \times d$ self-adjoint circulant matrix over $ (x_1, \ldots, x_d)$ is the matrix of the form
\[
C_d(x_1, \ldots, x_d):=\frac{1}{\sqrt{d}}\left(\begin{array}{cccc}
x_1 & x_2 
&\cdots& x_d \\ 
x_d & x_1
&\cdots& x_{d-1} \\ 
\vdots&\vdots 
 & \ddots&\vdots \\ 
x_2 & x_3 
&\cdots & x_1
\end{array} \right)
. 
\] 
We are interested in the case of $Nd \times Nd$ self-adjoint block matrices with circulant structure in which the blocks are, up to symmetry, independent and identically distributed $N \times N$ Wigner matrices. For this, let $(a_{ij})_{i,j \geq1}$ be a family of independent and identically distributed random variables such that $\E(a_{ij}^2)=0$, $\E(|a_{ij}|^2)=1$ and $\E(|a_{ij}|^4)< \infty$. Recall the mapping in \eqref{def:map_Wigner} and let $A$ be the $N \times N$ Wigner matrix given by $A=W((a_{ij})_{1\leq j \leq i \leq N})$. 
Let $A^{(1)}, \ldots, A^{(\lfloor \frac{d}{2}\rfloor +1)}$ be independent copies of the Wigner matrix $A$ and then set $A^{(i)}=A^{(d-i+2)}$ for $i= \lfloor \frac{d}{2}\rfloor +1, \ldots , d$. Denote by $X_N$ the $Nd \times Nd $ self-adjoint block circulant matrix given by  $X_N := C_d( A^{(1)}, \ldots, A^{(d)})$. 

It has been shown in \cite{Oraby2007} that the limiting spectral distribution of $X_N$ is a weighted sum of two semicircular distributions, i.e. has an $SS$-law as named by Girko in \cite{Girko2000}. We  provide now a quantitative version of this result: 
\begin{theorem}\label{theo:circulant}
For fixed $d \geq 1$,  there exists $C>0$ which depends on the distribution of $a_{11}$ such that for any $z \in \mathbb{C}^+ $,
 \[
\big|\mathbb{E}[\tr_{dN}(\R_{X_N}(z))] - \tau ( \R_d(z)) \big|
\leq C\frac{(|z|\vee 1)^4}{(\Im(z) \wedge 1)^6}\frac{1}{\sqrt{N}} ,
\]
where $ \tau \circ \R_d$ is the Cauchy transform associated with the distribution $\mu_d$ given by
\[
\mu_d=  \left\{
\begin{array}
[c]{ll}
\frac{d-1}{d} \gamma_{\frac{d-1}{d}} + \frac{1}{d} \gamma_{\frac{2d-1}{d}} & \mbox{ if $d$ is odd, }\\
\frac{d-2}{d} \gamma_{ \frac{d-2}{d}} + \frac{2}{d} \gamma_{ \frac{2d-2}{d}} & \mbox{ if $d$ is even,}\\
\end{array} \, 
\right.  
\]
and $\gamma_{\sigma^2}$ denotes the centered semicircular distribution of variance $\sigma^2$. Furthermore, the averaged distribution of $X_N$ converges to $\mu_d$ in Kolmogorov distance with 
\[
{\rm Kol} (\mathbb{E} \mu_{X_N} , \mu_d) \leq c N^{-1/24}, \qquad \text{for some positive constant } c.
\]
\end{theorem}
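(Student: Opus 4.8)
The plan is to block-diagonalise the block-circulant matrix $X_N$ by the scalar discrete Fourier transform, thereby reducing the statement to one about $d$ ordinary Wigner matrices, to which the quantitative estimate of Theorem~\ref{the:i.i.dblocksKroneckerform2} (with block size $1$) applies. Concretely, I would write $X_N=\frac{1}{\sqrt d}\sum_{r=0}^{d-1}P^{r}\otimes A^{(r+1)}$, where $P\in M_d(\C)$ is the cyclic shift $[P]_{ij}=\delta_{j,\,i+1\bmod d}$; the palindromic condition $A^{(k)}=A^{(d-k+2)}$ is precisely what makes $X_N$ self-adjoint. Let $F\in M_d(\C)$ be the unitary Fourier matrix diagonalising $P$, so that $F^{*}P^{r}F=\mathrm{diag}(1,\zeta^{r},\dots,\zeta^{(d-1)r})$ with $\zeta=e^{2\pi i/d}$. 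Conjugating by $F\otimes I_N$ turns $X_N$ into $\frac{1}{\sqrt d}\,\mathrm{diag}(B_0,\dots,B_{d-1})$ with $B_\ell=\sum_{k=1}^{d}\zeta^{(k-1)\ell}A^{(k)}$; pairing the index $k$ with $d-k+2$ and invoking $A^{(k)}=A^{(d-k+2)}$, each $B_\ell$ is self-adjoint and may be rewritten with \emph{real} coefficients as $B_\ell=\sum_{k=1}^{\lfloor d/2\rfloor+1}c^{(\ell)}_{k}A^{(k)}$, where $c^{(\ell)}_{1}=1$ and $c^{(\ell)}_{k}=2\cos\!\big(\tfrac{2\pi(k-1)\ell}{d}\big)$, except that when $d$ is even the last coefficient is $c^{(\ell)}_{d/2+1}=(-1)^{\ell}$. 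Since a unitary conjugation preserves the normalised trace of a resolvent, this gives $\mathbb{E}[\tr_{dN}(G_{X_N}(z))]=\frac1d\sum_{\ell=0}^{d-1}\mathbb{E}[\tr_N(G_{\frac{1}{\sqrt d}B_\ell}(z))]$.

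The next step is to observe that each block is itself a Wigner matrix with i.i.d. entries. Indeed, since $A^{(1)},\dots,A^{(\lfloor d/2\rfloor+1)}$ are independent copies of $A=W((a_{ij})_{1\le j\le i\le N})$ and the $c^{(\ell)}_{k}$ are real, one has $\frac{1}{\sqrt d}B_\ell=W\big((\tilde a^{(\ell)}_{ij})_{1\le j\le i\le N}\big)$ with $\tilde a^{(\ell)}_{ij}=\frac{1}{\sqrt d}\sum_{k}c^{(\ell)}_{k}a^{(k)}_{ij}$, and these are i.i.d. in $(i,j)$, centered, with vanishing pseudo-variance $\mathbb{E}[(\tilde a^{(\ell)}_{ij})^{2}]=0$, variance $v_\ell:=\frac1d\sum_{k}(c^{(\ell)}_{k})^{2}$, and finite fourth moment. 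Applying Theorem~\ref{the:i.i.dblocksKroneckerform2} with $d=1$ — after, if one does not wish to assume that $a_{ij}$ and $\overline{a_{ij}}$ are equidistributed, a harmless preliminary Lindeberg swap via Theorem~\ref{thm:Lindeberg} replacing the $\tilde a^{(\ell)}_{ij}$ by complex Gaussians with matching first two moments, at cost $\mathcal{O}_z(N^{-1/2})$ — produces a constant $K_\ell>0$ such that $\mathbb{E}[\tr_N(G_{\frac{1}{\sqrt d}B_\ell}(z))]=g_{v_\ell}(z)+\mathcal{O}_z(N^{-1/2})$ for $\Im(z)>K_\ell$, where $g_v$ denotes the Cauchy transform of the centered semicircular law $\gamma_v$. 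Setting $K=\max_\ell K_\ell$ and summing the $d$ (finitely many) estimates yields $\mathbb{E}[\tr_{dN}(G_{X_N}(z))]=\frac1d\sum_{\ell=0}^{d-1}g_{v_\ell}(z)+\mathcal{O}_z(N^{-1/2})$ for $\Im(z)>K$.

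It then remains to evaluate the $v_\ell$. Expanding the squares with $\cos^{2}\theta=\tfrac12(1+\cos2\theta)$ and summing the resulting geometric sums through $\sum_{r=0}^{d-1}\zeta^{2r\ell}=d\cdot\mathbf{1}_{\{d\,\mid\,2\ell\}}$, one obtains: for $d$ odd, $v_0=\tfrac{2d-1}{d}$ and $v_\ell=\tfrac{d-1}{d}$ for $\ell\ne0$; for $d$ even, $v_0=v_{d/2}=\tfrac{2d-2}{d}$ and $v_\ell=\tfrac{d-2}{d}$ for $\ell\notin\{0,d/2\}$. Consequently $\frac1d\sum_\ell g_{v_\ell}$ is exactly the Cauchy transform $\tau(G_d(\cdot))$ of the mixture $\mu_d$ displayed in the statement, which proves the quantitative bound. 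For the last assertion, $\mu_d$, being a finite convex combination of semicircular laws, has a bounded density and hence a continuous distribution function; the convergence of $\mathbb{E}[\tr_{dN}(G_{X_N}(z))]$ to $\tau(G_d(z))$ on $\{\Im(z)>K\}$ propagates, by a normal-families argument and analytic continuation, to weak convergence of $\mathbb{E}[\mu_{X_N}]$ to $\mu_d$, and weak convergence to a continuous limit is convergence in Kolmogorov distance; an almost-sure version follows upon adjoining a standard concentration estimate for $\tr_{dN}(G_{X_N}(z))$ viewed as a bounded-difference function of the independent entries.

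The only steps that are not routine bookkeeping are the rewriting of $B_\ell$ in the \emph{real-coefficient} Wigner form $W(\cdot)$ — which rests on the palindromic symmetry and is what lets the per-block black box Theorem~\ref{the:i.i.dblocksKroneckerform2} be applied directly — and the trigonometric evaluation of the variances $v_\ell$, which is what forces the limiting law to be the explicit two-point mixture of semicircles; I expect the former to be the main obstacle to making the reduction fully rigorous.
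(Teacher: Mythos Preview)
Your approach is correct and genuinely different from the paper's. The paper does \emph{not} Fourier-diagonalise; instead it rewrites $X_N$ as a block Wigner matrix whose $d\times d$ blocks $A_{ij}=C_d(a_{ij}^{(1)},\dots,a_{ij}^{(d)})$ are i.i.d.\ circulant matrices, applies Theorem~\ref{the:i.i.dblocksKroneckerform2} with these $d\times d$ blocks to obtain an $M_d(\C)$-valued semicircular element $S$ with covariance $\eta(B)=\frac{1}{d}\mathbb{E}[A_{12}BA_{12}]$, and then identifies the spectral distribution of $S$ with $\mu_d$ by deferring to the computation in Oraby's paper. Your route is more elementary and self-contained: the scalar Fourier conjugation $F\otimes I_N$ collapses the problem to $d$ ordinary Wigner matrices, the explicit trigonometric identity yields the variances $v_\ell$ (and hence $\mu_d$) directly, and no operator-valued subordination is needed to pin down the limit. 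The trade-off is that the paper's proof exhibits the circulant model as one more instance of its general operator-valued machinery, while yours sidesteps that machinery almost entirely and exploits the special algebraic structure of circulants; both land on the same $\mathcal O_z(N^{-1/2})$ estimate, and your Polya-type argument (weak convergence to a continuous CDF implies Kolmogorov convergence) is an acceptable substitute for the paper's appeal to the quantitative Cauchy-to-Kolmogorov bound of \cite{Sp-Va-Mai}. The two caveats you flag---the possible failure of $\tilde a^{(\ell)}_{ij}\stackrel{d}{=}\overline{\tilde a^{(\ell)}_{ij}}$ and the factor-of-two discrepancy on the diagonal between $W(\cdot)$ and the matrix in Theorem~\ref{the:i.i.dblocksKroneckerform2}---are real but, as you note, are absorbed by a preliminary Lindeberg swap at cost $\mathcal O_z(N^{-1/2})$.
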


\begin{proof}

Let us first introduce some notation. Consider the family of $d \times d$ self-adjoint circular random matrices $\{A_{ij} \ : i,j \geq 1 \}$ such that $A_{ij}=A_{ji}=A_{ij}^*=A_{ji}^*$ and $A_{ij}= C_d ( a_{ij}^{(1)} , a_{ij}^{(2)} , \ldots ,a_{ij}^{(d)} )$. 
The matrix $X_N$ is   the block  matrix in $M_d(\mathbb{C})\otimes M_N(\mathbb{C})$  given by $$X_N=\frac{1}{\sqrt{dN}}\sum_{1\leq  j< i \leq N}( A_{ij}\otimes E_{ij}+A_{ij}\otimes E_{ji})+\frac{1}{\sqrt{dN}}\sum_{1\leq  i \leq N} A_{ii}\otimes E_{ii}.$$ Note that for any $1 \leq j \leq i \leq N$, the matrices $A_{ij}$ are independent and identically distributed. Theorem~\ref{the:i.i.dblocksKroneckerform2} allows us to write that there exists $C>0$ such that for any $z\in \mathbb{C}^+$, 
$$\big|\tau[\R_{X_N}(z)] -\tau[\R_{S}(z)]\big|
\leq C\frac{(|z|\vee 1)^4}{(\Im(z) \wedge 1)^6}\frac{1}{\sqrt{N}},$$
where $S$ is a centered semicircular variable over $M_d(\mathbb{C})$ of covariance mapping
\[
\eta: M_d(\C) \rightarrow M_d(\C), \quad \eta(B)= \frac{1}{d}\E [A_{12} B A_{12}].
\]
One can directly show that $S$ has distribution $\mu_d$ or alternatively follow similar computations as in the proof of Proposition 1 in  \cite{Oraby2007}. The convergence in Kolmogorov distance now follows from by Theorem~\ref{BannaMai} since the cumulative distribution function of a sum of semicirculars ($SS$-law) is Lipschitz continuous.
\end{proof}

\section{Operator-valued Wishart matrices}\label{section:Wishart}
Let $(\M, \tau)$ be a tracial $W^*$-probability space and let $\mathbf{h}=(h_{ij})_{1\leq i,j \leq N}$ be a family of elements in $\M$. Consider the $N \times N$ matrix $H$ in $(M_N(\M) , \tau \otimes \tr_N)$ given by 
\[
H:=H_N= \left(\begin{array}{ccc}
h_{11} & \cdots & h_{1N} \\ 
\vdots&  &\vdots \\ 
h_{N1} & \cdots & h_{NN}
\end{array} \right) \in M_N (\M) \,.
\]
We are interested in studying the $N \times N$ operator-valued Wishart matrix $\frac{1}{N} HH^*$. This is an interesting  model in random matrix theory and is very well understood in the classical setting of commutative entries. However, not much is known  in the operator-valued case despite its significance importance  for many applications.  We mention, among others, wireless communication systems whose channels have such a Hermitian block structure \cite{Ra-Or-Br-Sp-08}. Although our result is of different nature, we  mention the work of  Bryc  \cite{Bryc2008} in which the author considers a  family of $q$-Wishart orthogonal matrices.

In this section, we give the same type of results using the same approach as before to study operator-valued Wishart matrices. However, the map $ B: (h_{ij})_{1\leq i,j \leq N} \mapsto B( (h_{ij})_{1\leq i,j \leq N}) = \frac{1}{N} HH^*$ is not linear and thus our noncommutative Lindeberg method in Section \ref{section:Lindeberg} is not directly applicable. To fix this up, we shall rather  consider the Hermitian  matrix
\[
X=\frac{1}{ \sqrt{N} } \left(\begin{array}{cc}
0 & H \\ 
H^* & 0
\end{array} \right) \in M_{2N} (\M) ,
\]
and note that a direct application of the Schur complement formula yields  for any $z\in \mathbb{C}^+$
\begin{equation}\label{CauchyTrans-relation}
(\tau\otimes \tr_{2N}) [\R_{X}(z)]= z \cdot (\tau \otimes \tr_{N}) [\R_{\frac{1}{N} HH^*}(z^2)] \,.
\end{equation}
This reduces the study to the matrix $X$ which is in turn equivalent,  through a permutation of the entries, to the Hermitian matrix $\mathbf{X}=\frac{1}{ \sqrt{N} } (x_{ij}+x_{ji}^*)_{1 \leq i,j \leq N}$ with 
\begin{equation}\label{entries-gram}
x_{ij}= 
\left(\begin{array}{cc}
0 & h_{ij} \\ 
0 & 0
\end{array} \right) \in M_{2} (\M)  \, . 
\end{equation}
Note that $\mathbf{X}$ is an operator-valued Wigner type matrix with entries in $M_2(\M)$. This allows us to apply the results in Section \ref{section:Op-v-matrices} and extend them to Wishart matrices. 
\subsection{Free entries}\label{Section:Wishart-free} We consider first matrices with free centered entries that don't need to be identically distributed. 
\begin{theorem}
 Let $(\M,\tau)$ be a  tracial $W^*$-probability space and  $\mathbf{h}=(h_{ij})_{1\leq i,j \leq N}$ be a family of free centered elements in $\M$ such that $\tau (h_{ij}^*h_{ij})=\tau (h_{ji}^*h_{ji})$. Let $\mathbf{c}=(c_{ij})_{1\leq i,j \leq N}$ be a family of free centered circular elements in $\M$ such that  $\tau (c_{ij}^*c_{ij}) =\tau (h_{ij}^*h_{ij})$ for every $1 \leq i,j \leq N$. Then there exists a positive constant $K$ which depends on the distribution of $h_{11}$ such that for any $z \in \mathbb{C}^+$,
\[
\big| (\tau \otimes \tr_N)[\R_{\frac{1}{N}HH^*} (z)]-(\tau \otimes \tr_N)[\R_{\frac{1}{N}CC^*} (z)] \big| \leq \frac{K}{\Im (z)^4} \frac{1}{\sqrt{N} \, },
\] 
 where $C$ is the $N\times N$ operator valued matrix given by $C= [(c_{ij})_{1 \leq i,j \leq N}]$. 
\end{theorem}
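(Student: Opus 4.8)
The plan is to reduce the statement to a comparison of two operator-valued Wigner-type matrices and then run the noncommutative Lindeberg method of Theorem~\ref{thm:Lindeberg} one matrix entry at a time. Two preliminary remarks: the spectral distributions of $\frac{1}{N}HH^*$ and of $\frac{1}{N}CC^*$ depend only on the $*$-distributions of $\mathbf{h}$, respectively of $\mathbf{c}$, so I may assume the whole family $\{h_{ij}\}\cup\{c_{ij}\}$ is free in $(\M,\tau)$; moreover $\|c_{ij}\|_\infty=2\sqrt{\tau(c_{ij}^*c_{ij})}=2\sqrt{\tau(h_{ij}^*h_{ij})}\le 2K$ with $K:=\max_{k,l}\|h_{kl}\|_\infty$, so every operator appearing below has norm at most $2K$. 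By the Schur-complement identity \eqref{CauchyTrans-relation} and invariance of $\tau\otimes\tr$ under unitary conjugation, for $z\in\C^+$ and $w:=\sqrt z\in\C^+$ one has $(\tau\otimes\tr_N)[G_{\frac{1}{N}HH^*}(z)]=w^{-1}(\tau\otimes\tr_{2N})[G_{\mathbf X}(w)]$, where — permuting the $2N$ basis vectors by $k\mapsto(k,1)$, $N+k\mapsto(k,2)$ — the Hermitization $X$ of \eqref{entries-gram} is carried to $\mathbf X=\sum_{i,j=1}^N h_{ij}\otimes^* d_{ij}\in M_{2N}(\M)$ with $d_{ij}=\frac{1}{\sqrt N}E_{i,N+j}$ an off-diagonal $2N\times2N$ matrix unit, and likewise $\frac{1}{N}CC^*$ to $\mathbf X'=\sum_{i,j}c_{ij}\otimes^* d_{ij}$. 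So it suffices to estimate $|(\tau\otimes\tr_{2N})[G_{\mathbf X}(w)]-(\tau\otimes\tr_{2N})[G_{\mathbf X'}(w)]|$.

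Next I would apply Theorem~\ref{thm:Lindeberg} to the $n=N^2$ self-adjoint operators $h_{ij}\otimes^* d_{ij}$ (as the $x$'s) and $c_{ij}\otimes^* d_{ij}$ (as the $y$'s), so the two extreme telescoping sums are $\mathbf X$ and $\mathbf X'$. Fix the step at which the $(i,j)$ entry is swapped; let $\mathbf z^0$ be the corresponding omitted sum and $G:=G_{\mathbf z^0}(w)$. Then $\mathbf z^0$ is again a ``chiral'' matrix $\frac{1}{\sqrt N}\left(\begin{smallmatrix}0&P\\P^*&0\end{smallmatrix}\right)$ whose block $P$ has the $(i,j)$ entry deleted and whose remaining entries lie in the free family $\{h_{kl},c_{kl}:(k,l)\ne(i,j)\}$; in particular $h_{ij},c_{ij}$ are free from all entries of $G$ and of its powers. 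Since $h_{ij},c_{ij}$ are centered, the first-order term $P$ of Theorem~\ref{thm:Lindeberg} vanishes at once. For the second-order term $Q$ I would split $h_{ij}\otimes^* d_{ij}=U+U^*$ with $U=h_{ij}\otimes d_{ij}$ supported in the upper-right block, and expand $\tau[GUGUG]$, $\tau[GUGU^*G]$, $\tau[GU^*GUG]$, $\tau[GU^*GU^*G]$. A standard freeness computation (using $\tau[a\,h\,b\,h]=\tau(a)\tau(b)\tau(h^2)$ and $\tau[a\,h\,b\,h^*]=\tau(a)\tau(b)\tau(hh^*)$ for $h$ centered and free from $a,b$) shows the two ``mixed'' terms are scalar multiples of $\tau(h_{ij}h_{ij}^*)=\tau(h_{ij}^*h_{ij})$ with coefficients depending only on $\mathbf z^0$; they therefore cancel against their $c_{ij}$-counterparts by the variance hypothesis and traciality. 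The two ``unmixed'' terms are scalar multiples of $\tau(h_{ij}^2)$ and $\overline{\tau(h_{ij}^2)}$, and here I claim the coefficients are zero: expanding $U$ in $2\times2$ blocks, each coefficient is a product of block-entrywise traces of $G$ that contains the factor $(\tau\otimes\id_{2N})[G]_{N+j,\,i}$; but the $(2,1)$-block of $G$ is, via the Schur complement, built from odd-length words in the entries of $P$ and $P^*$, and a non-crossing pairing argument shows that any such word whose $(j,i)$-entry has nonzero trace is forced to use the entry $P_{ij}$, which is $0$ in $\mathbf z^0$. Hence $(\tau\otimes\id_{2N})[G]_{N+j,i}=0$ and $Q=0$ (for the $c$-matrix the unmixed terms vanish for the trivial reason $\tau(c_{ij}^2)=0$).

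For the third-order term $R$ I would use, exactly as in the proof of Theorem~\ref{theo:Lindeberg-free}, the bound $|R|\le\Im(w)^{-4}\bigl(\|h_{ij}\otimes^* d_{ij}\|_\infty\|h_{ij}\otimes^* d_{ij}\|_{L^2}^2+\|c_{ij}\otimes^* d_{ij}\|_\infty\|c_{ij}\otimes^* d_{ij}\|_{L^2}^2\bigr)$. Since $d_{ij}$ is a normalized off-diagonal matrix unit and the two blocks of $h_{ij}\otimes^* d_{ij}$ are orthogonal, $\|h_{ij}\otimes^* d_{ij}\|_\infty=N^{-1/2}\|h_{ij}\|_\infty$ and $\|h_{ij}\otimes^* d_{ij}\|_{L^2}^2=N^{-2}\|h_{ij}\|_{L^2}^2$ (and similarly for $c_{ij}$), so each step contributes $\mathcal O(K^3\Im(w)^{-4}N^{-5/2})$. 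Summing over the $n=N^2$ steps yields $\sum(|P|+|Q|+|R|)=\mathcal O(N^{-1/2})$, and feeding this through the reduction of the first paragraph gives $|(\tau\otimes\tr_N)[G_{\frac{1}{N}HH^*}(z)]-(\tau\otimes\tr_N)[G_{\frac{1}{N}CC^*}(z)]|=\mathcal O(N^{-1/2})$, the constant depending on $z$ and on $K$.

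The delicate point is the exact vanishing of the unmixed second-order terms: crude operator-norm estimates on $Q$ only produce $\sum|Q|=\mathcal O(1)$, so the cancellation forced by the chiral structure of the Hermitization — morally, the fact that $\frac{1}{N}HH^*$ never feels $\tau(h_{ij}^2)$ — is genuinely needed. This is also why the result does not follow as a black-box consequence of Theorem~\ref{theo:Lindeberg-free}: over the amalgamation subalgebra $M_{2N}(\C)$ the required covariance identity would demand $\tau(h_{ij}^2)=0$, while over the diagonal subalgebra (where that identity does hold automatically) the entries of the Hermitization are no longer free.
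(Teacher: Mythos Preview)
Your overall structure matches the paper's: Hermitize $\frac1N HH^*$ to reduce to a chiral $2N\times2N$ matrix, then run the Lindeberg method entry by entry. The paper compresses the second step into a one-line citation of Theorem~\ref{theo:op-v-free}, applied with $\M$ replaced by $M_2(\M)$ and entries $x_{ij}=h_{ij}\otimes e_{12}$; since $x_{ij}^2=0$, the hypothesis $\tau(x_{ij}^2)=0$ of that theorem is trivially met, and the paper stops there.

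You take a more explicit route: you keep $h_{ij}\in\M$ and $d_{ij}\in M_{2N}(\C)$ and verify $P=Q=0$ by hand. Your last paragraph correctly diagnoses why this extra work is not idle. The $M_2(\M)$-valued entries $x_{ij}$ are free with amalgamation over $M_2(\C)$ but are \emph{not} scalar-free in $(M_2(\M),\tau\otimes\tr_2)$ --- for instance $(\tau\otimes\tr_2)\big[(x_{11}^*x_{11}-c_1)(x_{12}^*x_{12}-c_2)\big]=c_1c_2\ne0$ --- so Theorem~\ref{theo:op-v-free} does not apply verbatim; and if one instead runs Theorem~\ref{theo:Lindeberg-free} over $\B=M_{2N}(\C)$, the covariance identity fails exactly at the $\tau(h_{ij}^2)$-terms you isolate, because $d_{ij}bd_{ij}=\frac1N b_{N+j,i}E_{i,N+j}$ need not vanish. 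Your remedy --- showing that the \emph{specific} coefficient $[(\tau\otimes\id_{2N})G_{\mathbf z^0}]_{N+j,i}$ is zero, so that $Q$ vanishes even without $\tau(h_{ij}^2)=0$ --- is the right one and is the honest content behind the paper's shortcut $x_{ij}^2=0$.

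One small correction and one caveat. First, since the entries of $P$ in $\mathbf z^0$ include the $h_{kl}$'s (not only circulars), their higher $*$-cumulants need not vanish, so your ``non-crossing pairing argument'' must really be a non-crossing \emph{partition} argument (blocks of size $\ge2$, not just pairs). Second, the combinatorial claim --- that any non-crossing partition of the $2m{+}1$ letters of $[(P^*P)^mP^*]_{ji}$ with no singletons and constant matrix-position on each block forces some block to carry the position $(i,j)$ --- is true, but it is not a one-liner: one proves it by induction on $m$, looking at the block containing the last position and using that the first letter has column $j$ while the last has row $i$. This step is only sketched in your proposal and should be written out.
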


\begin{proof}
As mentioned above, it is equivalent to prove  for any $z \in \mathbb{C}^+$,
\[
\big| (\tau \otimes \tr_{2N})[\R_{X} (z)]-(\tau \otimes \tr_{2N})[\R_{Y} (z)] \big| \leq \frac{K}{\Im (z)^4} \frac{1}{\sqrt{N}} \, ,
\] 
where $X=\frac{1}{ \sqrt{N} } (x_{ij}+x_{ji}^*)_{1 \leq i,j \leq N}$ and $Y=\frac{1}{ \sqrt{N} } (y_{ij}+y_{ji}^*)_{1 \leq i,j \leq N}$  with
\[
x_{ij}= 
\left(\begin{array}{cc}
0 & h_{ij} \\ 
0 & 0
\end{array} \right)
\quad \text{and} \quad
 y_{ij}= 
\left(\begin{array}{cc}
0 & c_{ij} \\ 
0 & 0 \end{array} \right) \in M_2(\M) 
 \,. 
\]
Recalling the linear map $A$ defined in \eqref{def:map_matrix}, we note that $\mathbf{X}= A\big((\sqrt{2} x_{ij})_{1\leq i,j \leq N} \big)$  and  $\mathbf{Y}= A\big((\sqrt{2} y_{ij})_{1\leq i,j  \leq N} \big)$. The proof then follows from Theorem \ref{theo:op-v-free}.
\end{proof}

\begin{remark}
 If  $\tau (c_{ij}c_{ij}^*)=1$ for all $i,j \in \{1, \ldots , N \}$  then $\frac{1}{\sqrt{N}}C$ is itself a circular element in $\M$ and hence $\frac{1}{{N}}CC^*$ is a free Poisson element in $\M$. This can be proved using similar ideas as in \cite[Chapter 6, Theorem 11]{Mingo-Speicher}. 
\end{remark}

\subsection{Exchangeable entries}\label{Section:Wishart-exch}
We study in this section the behavior of operator-valued Wishart-type matrices with exchangeable entries. We state now the main result:

\begin{theorem}\label{thm:Gram-exch}
Let  $(\M, \tau)$ be a tracial $W^*$-probability space with $\M$ being a direct integral of finite dimensional factors of dimension $\leq d$.   Let  $\mathbf{h}=(h_{ij})_{1\leq i,j \leq N} $ be a family of exchangeable variables in $\M^n$ with $n=N^2$.  Set $\bar{h}_{ij}=h_{ij}-\mu$ where $\mu=\frac{1}{n}\sum_{i,j=1}^N h_{ij}$ and consider their variance function 
\[
\eta : \M \rightarrow \M, 
\qquad 
\eta(b)=\frac{1}{n}\sum_{i,j=1}^n \big( \bar{h}_{ij}b\bar{h}_{ij}^*+\bar{h}_{ij}^*b\bar{h}_{ij} \big).
\] 
Let $\mathcal{B}$ be the smallest $W^*$-algebra which is closed under $\eta$ and consider two centered $\B$-valued semicircular variable $S_1$ and $S_2$ of variances $E_{\mathcal{B}}(S_1bS_1)=E_{\mathcal{B}}(S_2bS_2)= \eta(b)$ which are free with amalgamation over $\mathcal{B}$. Setting $C=(S_1+iS_2)/\sqrt{2}$, then there exists a universal constant $c>0$ which depends on the distribution of $h_{11}$ such that  for any $z\in \mathbb{C}^+ $, 
\[
\big|(\tau \otimes \tr_N)[\R_{\frac{1}{N}HH^*}(z)] - \tau[\R_{CC^*}(z)]\big|
\leq c \frac{(|z|\vee 1)^4}{(\Im(z)\wedge 1)^6} \frac{1}{\sqrt{N}}.
\]
\end{theorem}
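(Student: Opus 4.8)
The plan is to reduce the nonlinear Wishart map to the linear operator‑valued matrix setting of Section~\ref{section:Op-v-matrices} by means of the hermitization already recorded before the statement. By \eqref{CauchyTrans-relation}, applied at the point $w=z^{1/2}\in\mathbb{C}^+$, it suffices to estimate $(\tau\otimes\tr_{2N})[G_X(w)]$ for $X=\frac{1}{\sqrt N}\left(\begin{smallmatrix}0&H\\H^*&0\end{smallmatrix}\right)$, and after the coordinate permutation described around \eqref{entries-gram} this is the Cauchy transform of $\mathbf{X}=\frac{1}{\sqrt N}(x_{ij}+x_{ji}^*)_{1\le i,j\le N}$ with $x_{ij}=\left(\begin{smallmatrix}0&h_{ij}\\0&0\end{smallmatrix}\right)\in M_2(\M)$. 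Recalling the linear map $A$ of \eqref{def:map_matrix}, one has $\mathbf{X}=A\big((\sqrt2\,x_{ij})_{1\le i,j\le N}\big)$, now over the tracial $W^*$‑probability space $(M_2(\M),\tau\otimes\tr_2)$, which is again a direct integral of finite‑dimensional factors, of dimension $\le 2d$.

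Next I would check that exchangeability is inherited: any permutation of the $N^2$ operators $h_{ij}$ induces the corresponding permutation of the $x_{ij}$ (the $h_{ij}$ sit in a fixed corner), so $(\sqrt2\,x_{ij})_{1\le i,j\le N}$ is an exchangeable family in $M_2(\M)$, and Theorem~\ref{thm:Op-v-matrices-exch} applies in $(M_2(\M),\tau\otimes\tr_2)$. A short computation shows that the variance function of this family is block‑diagonal, $\eta'\left(\begin{smallmatrix}b_{11}&b_{12}\\b_{21}&b_{22}\end{smallmatrix}\right)=\left(\begin{smallmatrix}\eta_1(b_{22})&0\\0&\eta_2(b_{11})\end{smallmatrix}\right)$, where $\eta_1(c)=\frac1n\sum_{i,j}\bar h_{ij}c\bar h_{ij}^*$, $\eta_2(c)=\frac1n\sum_{i,j}\bar h_{ij}^*c\bar h_{ij}$ and $\eta=\eta_1+\eta_2$; and since $0\le\frac1n\sum_{i,j}\bar h_{ij}\bar h_{ij}^*=\frac1n\sum_{i,j}h_{ij}h_{ij}^*-\mu\mu^*\le\frac1n\sum_{i,j}h_{ij}h_{ij}^*$ (and symmetrically for $\eta_2$), we get $\|\eta'\|^{1/2}\le\|h_{11}\|_\infty$. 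Hence the hypothesis $\mathrm{Im}(z^{1/2})>\|h_{11}\|_\infty$ is exactly the range of validity of Theorem~\ref{thm:Op-v-matrices-exch} at $w=z^{1/2}$, and it yields
\[
(\tau\otimes\tr_2\otimes\tr_N)[G_{\mathbf{X}}(w)]-(\tau\otimes\tr_2)[G_{\tilde S}(w)]=\mathcal{O}_w\Big(\frac{1}{\sqrt N}\Big),
\]
where $\tilde S$ is a centered semicircular element over the subalgebra $\mathcal{B}'\subset M_2(\M)$ generated by $\eta'$, with $E_{\mathcal{B}'}(\tilde S\,b\,\tilde S)=\eta'(b)$.

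It then remains to identify the limit. Because the ``diagonal'' part of $\eta'$ vanishes, the $M_2(\M)$‑valued semicircular $\tilde S$ has the form $\tilde S=\left(\begin{smallmatrix}0&C\\C^*&0\end{smallmatrix}\right)$ for a $\mathcal{B}$‑valued circular element $C$; comparing covariances one checks that $C$ may be realized as $C=(S_1+iS_2)/\sqrt2$ with $S_1,S_2$ free‑with‑amalgamation $\mathcal{B}$‑valued semicirculars as in the statement. Then $\tilde S^2=\mathrm{diag}(CC^*,C^*C)$, and applying the Schur complement once more, exactly as in \eqref{CauchyTrans-relation} but in reverse, together with the traciality of $\tau$ (which gives $\tau[G_{CC^*}(\cdot)]=\tau[G_{C^*C}(\cdot)]$), yields $(\tau\otimes\tr_2)[G_{\tilde S}(w)]=w\,\tau[G_{CC^*}(w^2)]$. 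Combining this with \eqref{CauchyTrans-relation} for $X\cong\mathbf{X}$, the displayed estimate becomes, after dividing by $w=z^{1/2}$ (bounded away from $0$ on the stated domain) and setting $z=w^2$,
\[
(\tau\otimes\tr_N)[G_{\frac1N HH^*}(z)]-\tau[G_{CC^*}(z)]=\mathcal{O}_z\Big(\frac{1}{\sqrt N}\Big),
\]
which is the claim, with the implicit constant controlled by $z$, $d$ and $\|h_{11}\|_\infty$ through Theorem~\ref{thm:Op-v-matrices-exch}.

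The main obstacle is the identification step of the previous paragraph: one must recognize that the operator‑valued semicircular over $M_2(\M)$ produced by Theorem~\ref{thm:Op-v-matrices-exch} is precisely the hermitization of the element $C$ appearing in the statement, and this requires careful bookkeeping of the block structure of $\eta'$ and of the several factors of $2$ and $\sqrt2$ entering through the symmetrization in the definition of $A$, through the substitution $z\leftrightarrow z^{1/2}$, and through the $\tr_2$‑normalization. A secondary point, sketched above, is to verify that the norm constraint $\|\eta'\|^{1/2}\le\|h_{11}\|_\infty$ matches the hypothesis $\mathrm{Im}(z^{1/2})>\|h_{11}\|_\infty$, so that Theorem~\ref{thm:Op-v-matrices-exch} is applicable on exactly the stated domain.
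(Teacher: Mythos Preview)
Your proposal is correct and follows essentially the same route as the paper: hermitize via \eqref{CauchyTrans-relation}, recognize $\mathbf{X}=A\big((\sqrt{2}\,x_{ij})\big)$ in $M_2(\M)$, apply Theorem~\ref{thm:Op-v-matrices-exch} there, and identify the resulting $M_2(\M)$-valued semicircular with $\left(\begin{smallmatrix}0&C\\C^*&0\end{smallmatrix}\right)$ so that Schur complement again gives $\tau[G_{\mathbf S}(w)]=w\,\tau[G_{CC^*}(w^2)]$. You are in fact more explicit than the paper on several points it leaves implicit (inheritance of exchangeability, the factor dimension $\le 2d$, and the match $\|\eta'\|^{1/2}\le\|h_{11}\|_\infty$ with the stated domain).
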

The variable  $C$ is a $\B$-valued circular variable of variance $E_{\mathcal{B}}(CbC)=E_{\mathcal{B}}(C^*bC^*)=0$, $E_{\mathcal{B}}(CbC^*)=\frac{1}{n}\sum_{i,j=1}^n \bar{h}_{ij}b\bar{h}_{ij}^*$ and $E_{\mathcal{B}}(C^*bC)=\frac{1}{n}\sum_{i,j=1}^n \bar{h}_{ij}^*b\bar{h}_{ij}.$
\begin{proof}Let $\mathbf{x}=(x_{ij})_{1\leq i,j \leq N} $ with  $x_{ij}$ as defined as in \eqref{entries-gram}, set $\bar{x}_{ij}=x_{ij}-\mu$ where $\mu=\frac{1}{n}\sum_{i,j=1}^N x_{ij}$ and consider their variance function 
\[
\eta_2 : M_2(\M) \rightarrow M_2(\M), 
\qquad 
\eta_2(b)=\frac{1}{n}\sum_{i,j=1}^n \big(\bar{x}_{ij}b\bar{x}_{ij}^*+\bar{x}_{ij}^*b\bar{x}_{ij}\big).
\] 
Let $\mathcal{B}_2$ be the smallest $W^*$-algebra which is closed under $\eta_2$ and consider a centered $\B_2$-valued semicircular variable $\mathbf{S}$ of variance $E_{\B_2}(\mathbf{S}b\mathbf{S})= \eta_2(b)$.
Because $\mathbf{X}=A\Big((\sqrt{2}x_{ij})_{1\leq i,j \leq N}\Big)$ with $A$ as defined in \eqref{def:map_matrix}, we can apply Theorem~\ref{thm:Op-v-matrices-exch} and get
\[
\big|(\tau \otimes \tr_{2N})[\R_{\mathbf{X}}(z)] - \tau[\R_{\mathbf{S}}(z)]\big|
\leq c\frac{ (|z|\vee 1)^4}{(\Im(z)\wedge 1)^6}\frac{1}{\sqrt{N}},
\]
for some  universal constant $c>0$. The proof ends by using \eqref{CauchyTrans-relation} and noting that $\mathbf{S}$ has the same distribution as
 \[\left(\begin{array}{cc}
0 & C \\ 
C^* & 0
\end{array} \right) \in M_{2N} (\M) .
\qedhere\]
\end{proof}

 \subsection{Random matrices with correlated  blocks}
 We consider in Theorem \ref{theo:Wishart-Correlated-Gau} matrices with a Wishart block structure in which the blocks can be correlated but their entries are i.i.d. random variables having finite third moments.
This generalizes  ~\cite[Theorem 1]{Ra-Or-Br-Sp-08} in which the limiting distribution was described for Gaussian entries and without any rates of convergence.
\begin{theorem}\label{theo:Wishart-Correlated-Gau} Fix $d\geq 1$ and consider the $Nd \times Nd$  block matrix $H_N$ defined by
$$H_N= \frac{1}{\sqrt{d}}\left(\begin{array}{ccc}
H^{(11)} & \cdots& H^{(1d)} \\ 
\vdots& \ddots&\vdots \\ 
H^{(d1)} & \cdots & H^{(dd)}
\end{array} \right), $$ 
where, for any $1\leq i,j \leq d $,  $H^{(ij)}=(\frac{1}{\sqrt{N}}h_{rp}^{(ij)})_{r,p=1}^N$ are $N\times N$  random matrices having  i.i.d entries and such that  for any $i,j,k,l =1, \ldots, d$ 
$$ 
\text{Cov}(h_{rs}^{(ij)}, \overline{h}_{pq}^{(kl)})=\delta_{rp}\delta_{sq}\sigma(i,j;k,l), \; \text{Cov}(h_{rs}^{(ij)}, h_{rs}^{(kl)})=0\quad \text{and} \quad   \E[|h_{11}^{(ij)}|^3]< \infty,$$
where $\sigma$ is real-valued covariance function such that $\sigma(i,j;k,l)={\sigma(k,l;i,j)}$. Then, for $N\to \infty$, the matrix $H_NH_N^*$ has a limiting eigenvalue distribution whose Cauchy transform $g$ is determined by $g(z)=\tr_d(\R(z))$, where $\R$ is an $M_d(\mathbb{C})$-valued analytic function on $\mathbb{C}^+$, which is uniquely determined by the requirement that, $z\R(z)\to 1$ when $|z|\to \infty$, and that, for $z\in \mathbb{C}^+$, 
$$z\R(z)=1+\eta_1\big(\big(1-\eta_2(\R(z))\big)^{-1}\big) \cdot \R(z),$$
where $\eta_1:M_d(\mathbb{C})\to M_d(\mathbb{C})$ and $\eta_2:M_d(\mathbb{C})\to M_d(\mathbb{C})$  are  the covariance mappings
$$
\eta_1(B)_{ij}=\frac{1}{2d} \sum_{k,l=1}^d\sigma(i,k;j,l)B_{kl}
\quad \text{and} \quad
\eta_2(B)_{ij}=\frac{1}{2d} \sum_{k,l=1}^d\sigma(k,i;l,j)B_{lk}
. $$
Moreover, there exists $c>0$ which depends on the distribution of $( h_{11}^{(ij)})_{i,j=1}^d$ such that for any $z \in \mathbb{C}^+ $,
\[
\big|\mathbb{E}[\tr_{dN}(\R_{H_NH_N^*}(z))] - g(z)\big|
\leq c\frac{ (|z|\vee 1)^4}{(\Im(z)\wedge 1)^6}\frac{1}{\sqrt{N}}.
\]
\end{theorem}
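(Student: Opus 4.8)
The plan is to reduce this Wishart-type model, via the linearization of Section~\ref{section:Wishart}, to an operator-valued Wigner matrix with \emph{i.i.d.}\ $M_{2d}(\mathbb{C})$-valued blocks, apply the block-matrix results already obtained (as in the proofs of Theorems~\ref{theo:Correlated-Gau} and~\ref{theo:Kronecker-correlations}), and then pass from the resulting $M_{2d}(\mathbb{C})$-valued Dyson equation to the asserted $M_d(\mathbb{C})$-valued one by a Schur complement. First I would relabel the entries: permuting rows and columns identifies $H_N$ with $\tfrac{1}{\sqrt N}\,\mathcal{H}$, where $\mathcal{H}=(\tilde h_{rp})_{r,p=1}^N$ is an $N\times N$ matrix over $M_d(\mathbb{C})$ whose block $\tilde h_{rp}=\tfrac{1}{\sqrt d}(h^{(ij)}_{rp})_{i,j=1}^d$ is a $d\times d$ random matrix. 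The hypotheses ensure that the family $\{\tilde h_{rp}\}_{r,p}$ is i.i.d.\ (the correlations governed by $\sigma$ being \emph{internal} to a block) with $\mathbb{E}[\|\tilde h_{11}\|^3]<\infty$, and that $H_NH_N^*=\tfrac1N\mathcal{H}\mathcal{H}^*$. I would then set $X=\tfrac{1}{\sqrt N}\begin{pmatrix}0&\mathcal{H}\\\mathcal{H}^*&0\end{pmatrix}$, invoke \eqref{CauchyTrans-relation} to get $\mathbb{E}[\tr_{2dN}(G_X(z))]=z\,\mathbb{E}[\tr_{dN}(G_{H_NH_N^*}(z^2))]$, and observe that after a further permutation $X$ becomes the Wigner-type matrix $\mathbf X=\tfrac{1}{\sqrt N}\sum_{r,p}\big(x_{rp}\otimes E_{rp}+x_{rp}^*\otimes E_{pr}\big)$ with i.i.d.\ blocks $x_{rp}=\begin{pmatrix}0&\tilde h_{rp}\\0&0\end{pmatrix}\in M_2(M_d(\mathbb{C}))\cong M_{2d}(\mathbb{C})$, i.e.\ exactly in the form covered by Corollary~\ref{the:i.i.dblocks}.

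To remove the unboundedness I would run the truncation step from the proof of Theorem~\ref{the:i.i.dblocksKroneckerform}: replace each block $\tilde h_{rp}$ by a bounded i.i.d.\ copy with the same first and second moments and control the difference by the noncommutative Lindeberg method of Theorem~\ref{thm:Lindeberg}; matching of the first two moments kills $\mathbb{E}[P_i]$ and $\mathbb{E}[Q_i]$, while $\sum_i|\mathbb{E}[R_i]|=\mathcal{O}_z(1/\sqrt N)$ follows from $\mathbb{E}[\|\tilde h_{11}\|^3]<\infty$. The common mean $\mathbb{E}[x_{11}]$ contributes to $\mathbf X$ only a perturbation of rank $\mathcal{O}(d)$, hence changes $\tr_{2dN}$ of the resolvent by $\mathcal{O}_z(d/N)$ by eigenvalue interlacing, so I may also assume the blocks centered. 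Corollary~\ref{the:i.i.dblocks} (with $2d$ in place of $d$) then gives, for $Im(z)$ large enough, that $\mathbb{E}[\tr_{2dN}(G_{\mathbf X}(z))]$ is within $\mathcal{O}_z(1/\sqrt N)$ of $\tr_{2d}(G^{(2d)}(z))$, where $G^{(2d)}$ is the $M_{2d}(\mathbb{C})$-valued solution of $zG^{(2d)}(z)=1+\widehat\eta(G^{(2d)}(z))\,G^{(2d)}(z)$ with $\widehat\eta(B)=\tfrac12\big(\mathbb{E}[\bar x_{11}B\bar x_{11}^*]+\mathbb{E}[\bar x_{11}^*B\bar x_{11}]\big)$ and $\bar x_{11}=x_{11}-\mathbb{E}[x_{11}]$. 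Because of the nilpotent block shape of $x_{11}$, a direct computation shows that $\widehat\eta$ takes block-diagonal values, $\widehat\eta\begin{pmatrix}b_{11}&b_{12}\\b_{21}&b_{22}\end{pmatrix}=\begin{pmatrix}\eta_1(b_{22})&0\\0&\eta_2(b_{11})\end{pmatrix}$, so $G^{(2d)}$ is block-diagonal and the associated semicircular $\mathbf S$ has the same distribution as $\begin{pmatrix}0&C\\C^*&0\end{pmatrix}$ for an $M_d(\mathbb{C})$-valued circular element $C$, exactly as in the proof of Theorem~\ref{thm:Gram-exch}.

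Writing $G^{(2d)}(z)=\begin{pmatrix}g_1(z)&0\\0&g_2(z)\end{pmatrix}$ (the off-diagonal blocks vanish by the $\mathbb{Z}_2$-symmetry), the Dyson equation splits as $zg_1=1+\eta_1(g_2)g_1$ and $zg_2=1+\eta_2(g_1)g_2$, while the block-inversion identity underlying \eqref{CauchyTrans-relation} gives $g_1(z)=z\,G(z^2)$, where $G(\cdot)$ denotes the $M_d(\mathbb{C})$-valued limit of $E_{M_d(\mathbb{C})}[G_{H_NH_N^*}(\cdot)]$. Eliminating $g_2=(z-\eta_2(g_1))^{-1}$ and using linearity of $\eta_2$ to factor out the scalar $z$ then yields $zG(z)=1+\eta_1\big((1-\eta_2(G(z)))^{-1}\big)G(z)$ together with $zG(z)\to1$, so that $g(z)=\tr_d(G(z))$; collecting the $\mathcal{O}_z(1/\sqrt N)$ errors would complete the proof.

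I expect the genuinely delicate points to be (i) evaluating $\eta_1$ and $\eta_2$ explicitly in terms of $\sigma$ — in particular the partial transpose that appears in $\eta_2$, which is the fixed-block-dimension analogue of the $\Theta$-phenomenon of Lemma~\ref{Normtranspose} — and (ii) bookkeeping the $z\leftrightarrow z^2$ substitution, the branch of the square root, and the domain, so that $Im(z)>K$ really suffices for all the intermediate estimates; everything else is a routine combination of the linearization and of the i.i.d.-blocks theorems established earlier.
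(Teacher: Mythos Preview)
Your proposal is correct and follows essentially the same route as the paper: relabel so that $H_N=\tfrac{1}{\sqrt N}\sum_{r,p}E_{rp}\otimes H_{rp}$ with i.i.d.\ $d\times d$ blocks, linearize via $x_{rp}=\begin{pmatrix}0&H_{rp}\\0&0\end{pmatrix}\in M_{2d}(\mathbb{C})$, apply the i.i.d.-blocks theorem (the paper invokes Theorem~\ref{the:i.i.dblocksKroneckerform} directly, which already absorbs your separate truncation and centering steps), and observe that the resulting $M_{2d}(\mathbb{C})$-valued covariance map is block-diagonal with blocks $\eta_1,\eta_2$. The paper is terser on the final passage from the $2d$-dimensional Dyson equation to the asserted $M_d(\mathbb{C})$-valued fixed-point equation, simply citing \cite{Ra-Or-Br-Sp-08}, whereas you spell out the elimination $g_2=(z-\eta_2(g_1))^{-1}$ and the $z\mapsto z^2$ substitution explicitly; your version of that step is correct and arguably more self-contained.
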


\begin{proof}
Note that $H_N= \frac{1}{\sqrt{d}}\sum_{i,j}H^{(ij)}\otimes E_{ij}
= \frac{1}{\sqrt{N}}\sum_{r,p}E_{rp}\otimes H_{rp}$
where   the blocks $H_{rp}= \frac{1}{\sqrt{d}}( h_{rp}^{(ij)})_{i,j=1}^d$ are i.i.d. $d\times d$   random matrices and hence exchangeable. The study is then reduced to the matrix $X_N  =\frac{1}{ \sqrt{N} } (x_{ij}+x_{ji}^*)_{1 \leq i,j \leq N}$ with  
\[
x_{ij}= 
\left(\begin{array}{cc}
0 & H_{ij} \\ 
0 & 0
\end{array} \right) \in M_{2d} (\M)  \, . 
\]
Then  Theorem \ref{the:i.i.dblocksKroneckerform} holds with $\eta:  M_{2d} (\M) \rightarrow M_{2d} (\M)$ given by 
\[
\eta
 \left(\begin{array}{cc}
B_{11} & B_{12} \\ 
B_{21} & B_{22} 
\end{array} \right) = 
\left(\begin{array}{cc}
\eta_1(B_{22}) & 0 \\ 
0 & \eta_2(B_{11}) 
\end{array} \right),
\]
which is the same covariance map obtained in \cite{Ra-Or-Br-Sp-08} and hence the rest of the arguments follow. 
\end{proof}

\section{Proof of Theorem \ref{theo:Lindeberg-exch}} \label{Section:proof-main-result}

 In the commutative setting, the limiting distribution of a Wigner type matrix,  whose entries are  exchangeable random variables, was shown in \cite{Ch-06} to be semicircular. This was done via the Lindeberg method  that allows approximating the spectral distribution of the matrix by that of a GOE, a Wigner matrix with i.i.d. Gaussian entries, and then concluding its limit. Our proof  is inspired from its commutative analogue  and is also based on the Lindeberg method  together with an operator-valued Gaussian interpolation technique. However, it is not a straightforward extension as one has to deal with all the difficulties arising in the noncommutative realm that need nontrivial manipulation of some terms when exchangeable elements are considered and freeness is dropped. 

We proceed now to the proof and consider the shifted random elements $\bar{x}_1,\dots,\bar{x}_n,\bar{y}_1,\dots,\bar{y}_n$ given  by $\bar{x}_i=x_i-\frac{1}{n}\sum_{k=1}^nx_k$ and $\bar{y}_i=y_i-\frac{1}{n}\sum_{k=1}^ny_k$  and set
$$\bar{\mathbf x} = \sum_{i=1}^n\bar{x}_i\otimes^* a_i\ \ \text{and}\ \ \bar{\mathbf y}= \sum_{i=1}^n\bar{y}_i\otimes^* a_i,$$
where we recall that, for any $x\in \M$ and $a\in \N$,  $x\otimes^* a=x \otimes a +x^*\otimes a^*$. To prove the main result, we shall first reduce our study to the  \emph{exchangeable} elements $(\bar{x}_1,\dots,\bar{x}_n)$ and $(\bar{y}_1,\dots,\bar{y}_n)$  that are such that $\sum_{i=1}^n \bar{x}_i=\sum_{i=1}^n \bar{y}_i=0$.

To do so, we first remark that for any $z \in \mathbb{C}^+$, we have by the resolvent identity  
\begin{align*}
|\tau\otimes \varphi(\R_{\mathbf x}(z)) - \tau\otimes \varphi(\R_{\bar{\mathbf x}}(z))|
&=\big|\tau\otimes \varphi \big(\R_{\mathbf x}(z)(\uX-\bar{\uX}) \R_{{\bar{\uX}}(z)}(z)\big)\big|\\
&\leq \frac{1}{\Im(z)^2} \left\|\left(\frac{1}{n}\sum_{k=1}^n x_k\right)\otimes^* \left(\sum_{i=1}^na_i\right)\right\|_{L^1}\\
&\leq 2 \frac{ K_1}{\Im(z)^2} \Big\|\frac{1}{n}\sum_{i=1}^n x_i\Big\|_{\infty}.
\end{align*}
By exchangeability, we have
$$\Big\|\frac{1}{n}\sum_{k=1}^n x_k\Big\|_{\infty}\leq \frac{1}{n}\sum_{k=1}^n \left\| x_k\right\|_{\infty}=\left\| x_1\right\|_{\infty} .$$
Similarly, we get
$$
\big|\mathbb{E}\big[\varphi(\R_{\mathbf y}(z)) - \varphi(\R_{\bar{\mathbf y}}(z))\big]\big|
\leq
2 \frac{ K_1}{\Im(z)^2}\mathbb{E}\Big\|\frac{1}{n}\sum_{k=1}^n y_k\Big\|_{\infty},
$$
with
$$\frac{1}{n}\sum_{k=1}^n y_k=\frac{1}{n}\sum_{k=1}^n \Big(\frac{1}{\sqrt{n}}\sum_{l=1}^nN_{k,l} \Big) x_l-\frac{1}{n}\sum_{k=1}^n \Big(\frac{1}{\sqrt{n}}\sum_{l=1}^nN_{k,l}\Big) \Big( \frac{1}{n} \sum_{j=1}^n x_j\Big), $$
which can be bounded as follows:
\[
\mathbb{E} \Big\| \frac{1}{n}\sum_{k=1}^n \Big(\frac{1}{\sqrt{n}}\sum_{l=1}^nN_{k,l} \Big) x_l \Big\|_{\infty}
\leq \frac{1}{n}\sum_{l=1}^n \mathbb{E}\Big|\frac{1}{\sqrt{n}}\sum_{k=1}^nN_{k,l}\Big| \|x_1\|_\infty \leq\|x_1\|_\infty,
\]
and 
\[
\mathbb{E} \Big\| \frac{1}{n}\sum_{k=1}^n \Big(\frac{1}{\sqrt{n}}\sum_{l=1}^nN_{k,l}\Big) \Big( \frac{1}{n} \sum_{j=1}^n x_j\Big)\Big\|_{\infty}
\leq \frac{1}{n}\sum_{l=1}^n\mathbb{E}\Big|\frac{1}{\sqrt{n}}\sum_{k=1}^n N_{k,l}\Big| \|x_1\|_\infty \leq\|x_1\|_\infty. 
\]
Thus, we get that
$$\big|\tau\otimes \varphi(\R_{\mathbf x}(z)) - \mathbb{E}[\tau\otimes\varphi(\R_{\mathbf y}(z))] \big|\leq 6 \frac{ K_1}{\Im(z)^2}\|x_1\|_{\infty}+ \big|\tau\otimes \varphi(\R_{\bar{\uX}}(z)) - \mathbb{E}[\tau\otimes\varphi(\R_{\bar{\mathbf y}}(z))] \big|.$$ 
It remains to control the term
\[
\big|\tau\otimes \varphi(\R_{\bar{\uX}}(z)) - \mathbb{E}[\tau\otimes\varphi(\R_{\bar{\mathbf y}}(z))] \big|.
\]
 This is  done in the following Proposition~\ref{theo:exch-zeromean} by which we can conclude our main statement. 

\begin{proposition}\label{theo:exch-zeromean}
Let $(\M,\tau)$ and $(\N,\varphi)$ be two tracial $W^*$-probability spaces. Let $(x_1,\dots,x_n)$ be an $n$-tuple of \emph{exchangeable} elements in $\M$ such that $\sum_{i=1}^nx_i=0$.  Consider a family $(N_{i,k})_{1\leq i,k \leq n}$  of independent standard Gaussian random variables and let $(y_1,\dots,y_n)$ be the $n$-tuple of random elements in $\M$ given by
$$y_i=\frac{1}{\sqrt{n}}\sum_{k=1}^n N_{i,k}x_k$$
and consider the shifted random elements $(\bar{y}_1,\dots,\bar{y}_n)$ given by $$\bar{y}_i=y_i-\frac{1}{n}\sum_{k=1}^ny_k.$$
Let $(a_1,\dots,a_n)$ be an $n$-tuple of elements in $\N$ and set
$$\mathbf x = \sum_{i=1}^nx_i\otimes a_i+x_i^*\otimes a_i^*\ \ \text{and}\ \ \bar{\mathbf y}= \sum_{i=1}^n\bar{y}_i\otimes a_i+\bar{y}_i^*\otimes a_i^*.$$
Then,  for any $z\in \mathbb{C}^+$,
$$|\tau\otimes \varphi(\R_{\uX}(z))  - \mathbb{E}[\tau\otimes \varphi(\R_{\bar{\mathbf y}}(z))]| \leq \frac{K}{(\Im(z)\wedge 1)^5}n^{-1/4},$$
with
$$
K=C\cdot\left( K_2^2\|x_1\|^2_{\infty}n^{3/4} +K_\infty K_2^2\|x_1\|_{\infty}^3n^{5/4}+K_\infty^2K_2^2\|x_1\|^4_{\infty}n^{5/4}\right),$$
where $K_\infty=\max_i \| a_i\|_{\infty}, K_2=\max_i \| a_i\|_{L^2}$, $K_1= \|\sum_{i=1}^n a_i\|_{L^1}$ and $C$ is a universal constant. 
\end{proposition}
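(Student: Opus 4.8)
The plan is to apply the noncommutative Lindeberg method of Theorem~\ref{thm:Lindeberg} inside the tracial $W^*$-probability space $(\M\otimes\N,\tau\otimes\varphi)$ to the two $n$-tuples of self-adjoint elements $u_i=x_i\otimes^* a_i$ and $v_i=\bar y_i\otimes^* a_i$. Since $\sum_i u_i=\mathbf x$ and $\sum_i v_i=\bar{\mathbf y}$, this reduces the estimate to bounding $\sum_{i=1}^n\big(|\E P_i|+|\E Q_i|+|\E R_i|\big)$, where $P_i,Q_i,R_i$ are the first, second and third order terms of the telescoping expansion, with resolvents $G_{\mathbf z_i},G_{\mathbf z_{i-1}},G_{\mathbf z_i^0}$. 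Throughout we exploit two structural facts: that $\sum_i x_i=0$, and that, since $\bar y_i=\tfrac1{\sqrt n}\sum_k\big(N_{i,k}-\tfrac1n\sum_\ell N_{\ell,k}\big)x_k$, the operator $\bar{\mathbf y}$ is a \emph{permutation-symmetric} function of $(x_1,\dots,x_n)$ — a permutation of the $x_k$'s is absorbed into a permutation of the i.i.d.\ columns of $(N_{j,k})$ — so that, by exchangeability of the $x_k$'s, any $\tau\otimes\varphi$–moment built from $\bar{\mathbf y}$ and the $x_k$'s is unchanged under a permutation of the $x_k$'s. The same relabelling shows $\tau\otimes\varphi(G_{\mathbf x}(z))$ and $\E[\tau\otimes\varphi(G_{\bar{\mathbf y}}(z))]$ are invariant under permutations of the $a_i$, so we may average the whole telescoping sum over a uniformly random order of the swaps.

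The third order remainders $R_i$ are the mildest. The $x$-part is bounded, as in the proof of Theorem~\ref{theo:Lindeberg-free}, by $\|x_i\otimes^* a_i\|_\infty\,\|x_i\otimes^* a_i\|_{L^2}^2/\Im(z)^4\lesssim K_\infty K_2^2\|x_1\|_\infty^3/\Im(z)^4$, summing to $\tfrac{K_\infty K_2^2}{\Im(z)^4}\|x_1\|_\infty^3\,n$. For the $v_i$-part a naive $L^\infty$–$L^2$ bound loses a factor $\sqrt n$ (because $\|\bar y_i\|_\infty$ is typically of order $\sqrt n\,\|x_1\|_\infty$); one instead integrates by parts in the Gaussians. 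Each Gaussian integration by parts produces a factor $n^{-1/2}$ and either an extra resolvent or one more $\bar y$-type factor, and resumming the freed index against the elementary identity $\E[\bar y_iA\bar y_iB]=\tfrac1n\sum_k x_kAx_kB+O(n^{-2})$ turns everything into sums $\tfrac1n\sum_k x_k\cdots x_k$; this controls the $v_i$-remainder by $K_\infty^2K_2^2\|x_1\|_\infty^4/\Im(z)^5$ per index, hence by $\tfrac{K_\infty^2K_2^2}{\Im(z)^5}\|x_1\|_\infty^4\,n$ after summing (the extra order $\Im(z)^{-5}$ and the extra powers of $\|x_1\|_\infty$ and $\|a_i\|_\infty$ being exactly what the additional resolvent and factor contribute).

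The heart of the argument is that the first and second order contributions $\sum_i\E P_i$ and $\sum_i\E Q_i$ vanish up to an admissible error — this is the step where exchangeability replaces the vanishing that freeness provided in Theorem~\ref{theo:Lindeberg-free}. For each $i$ one integrates by parts in the Gaussians carried by the $v_i$-factors and by the resolvents, using the identity above to express the Gaussian contributions of $P_i,Q_i$ through sums $\tfrac1n\sum_k(\cdots)$; one then replaces the mixed pivot $G_{\mathbf z_i^0}$ (and the remaining resolvents in the $Q_i$-term) by the fully randomized, hence permutation-symmetric, $G_{\bar{\mathbf y}}$. After this replacement, exchangeability of the $x_k$'s gives $\E[\tau\otimes\varphi(G_{\bar{\mathbf y}}(x_i\otimes^* a_i)G_{\bar{\mathbf y}})]=\tfrac1n\sum_k\E[\tau\otimes\varphi(G_{\bar{\mathbf y}}(x_k\otimes^* a_i)G_{\bar{\mathbf y}})]=\E[\tau\otimes\varphi(G_{\bar{\mathbf y}}((\tfrac1n\textstyle\sum_k x_k)\otimes^* a_i)G_{\bar{\mathbf y}})]$, which is $0$ since $\sum_k x_k=0$; and likewise the $x$- and Gaussian contributions of $Q_i$ become equal once all resolvents are $G_{\bar{\mathbf y}}$. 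What remains is the error of the pivot replacement, $G_{\mathbf z_i^0}-G_{\bar{\mathbf y}}=G_{\mathbf z_i^0}(\bar{\mathbf y}-\mathbf z_i^0)G_{\bar{\mathbf y}}$ with $\bar{\mathbf y}-\mathbf z_i^0=\sum_{j<i}(\bar y_j-x_j)\otimes^* a_j$, together with the contributions of the weak dependence of $\mathbf z_i^0$ on $N_{i,k}$; averaging over the random swap order, using $\sum_j x_j=0$ and the near-orthogonality of the $\bar y_j-x_j$, these are kept of the right order, and feeding them into the order-$\Im(z)^{-3}$ structure of $P_i,Q_i$ and summing over $i$ leaves precisely the term $\tfrac{K_2^2}{\Im(z)^3}\|x_1\|_\infty^2\sqrt n$. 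Collecting the three contributions gives the asserted bound.

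The main obstacle — and the reason this proof occupies a whole section — is exactly this last accounting: one must extract, from the pivot-replacement error and from the (many) Gaussian integrations by parts, the cancellations forced by exchangeability and by $\sum_i x_i=0$, so as to obtain the sharp powers $\sqrt n$ at order $\Im(z)^{-3}$ and $n$ at orders $\Im(z)^{-4},\Im(z)^{-5}$ rather than a spurious extra $\sqrt n$. This is made delicate by the single common centring term $\tfrac1n\sum_k\bar y_k$, which makes \emph{every} $\bar y_i$, and hence every resolvent $G_{\mathbf z_i^0}$, depend on \emph{all} the Gaussians $N_{j,k}$; these couplings must be tracked throughout the integration by parts, where fortunately they always appear with an extra factor $n^{-1}$ and are therefore ultimately harmless.
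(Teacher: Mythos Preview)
Your proposal has a genuine gap in the handling of the first- and second-order terms $P_i,Q_i$. The direct Lindeberg swap between $u_i=x_i\otimes^* a_i$ and $v_i=\bar y_i\otimes^* a_i$, followed by the pivot replacement $G_{\mathbf z_i^0}\to G_{\bar{\mathbf y}}$, does not give the claimed $\sqrt{n}$ at order $\Im(z)^{-3}$. The replacement error is governed by $\bar{\mathbf y}-\mathbf z_i^0=\sum_{j\leq i}\bar y_j\otimes^* a_j-\sum_{j<i}x_j\otimes^* a_j$, whose $L^2$-norm is of order $\sqrt{i}\,K_2\|x_1\|_\infty$ (both pieces are of this size; near-orthogonality of the $\bar y_j$'s and the exchangeability identity $\tau(x_jx_{j'}^*)=-\|x_1\|_{L^2}^2/(n-1)$ only keep it from being worse). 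Feeding this into the $P_i$ error gives a contribution $\lesssim \frac{K_2^2\|x_1\|_\infty^2}{\Im(z)^3}\sqrt{i}$, and $\sum_{i=1}^n\sqrt{i}\sim n^{3/2}$, not $\sqrt n$. Averaging over a random swap order cannot repair this: for every ordering the hybrid $\mathbf z_i^0$ differs from $\bar{\mathbf y}$ on $i$ coordinates, and no cancellation between orderings removes the $\sqrt{i}$ size. The same obstruction hits the $v_i$-side of $P_i$: Gaussian integration by parts on $\bar y_i$ picks up, through the common centring, $(n-i)$ derivative terms each of size $\frac{1}{n}\cdot\frac{K_2^2\|x_1\|_\infty^2}{\Im(z)^3}$, summing to order $n$ rather than $\sqrt n$.

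The paper avoids exactly this difficulty by \emph{not} swapping $x_i\otimes^* a_i$ against $\bar y_i\otimes^* a_i$. It first rewrites $\mathbf x=\sum_i u_i\otimes^* b_i$ with the martingale-type increments $u_i=x_i-\frac{1}{n-i+1}\sum_{j\geq i}x_j$ and matching coefficients $b_i=a_i-\frac{1}{n-i}\sum_{j>i}a_j$, and swaps the $u_i$'s against auxiliary elements $v_i=\frac{1}{\sqrt n}\sum_k\tilde N_{i,k}x_k$ built from a \emph{fresh} Gaussian array $(\tilde N_{i,k})$. With this choice, $v_i$ is genuinely independent of $\mathbf z_i^0$, and exchangeability of $(x_j)_{j\geq i}$ makes the $u_i$-part of $P_i$ vanish identically (Lemma~\ref{lem:exch-keypoint}), so $\mathbb E[P_i]=0$ without any pivot replacement. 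The second-order discrepancy $Q_i$ then reduces to a term controlled by $\big\|\frac{1}{n-i+1}\sum_{j\geq i}x_j\big\|_{L^2}^2\leq \frac{\|x_1\|_{L^2}^2}{n-i+1}$ plus a swap error of fixed size, and $\sum_i\frac{1}{n-i+1}\leq C\sqrt n$. The passage from $\mathbf v=\sum_i v_i\otimes^* b_i$ to $\bar{\mathbf y}$ is done in a \emph{separate} Gaussian interpolation step, where the explicit covariance comparison $\sum_{i,i'}|\sigma_{i,i'}-\tilde\sigma_{i,i'}|\leq 3\sqrt n$ produces the $\sqrt n$ cleanly. Your third-order analysis (Gaussian integration by parts to tame $\|\bar y_i\|_\infty$) is in the right spirit and matches the paper's treatment of $R_i$, but the scheme for $P_i,Q_i$ needs to be replaced by this two-step construction.
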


\begin{proof}  
Consider $(u_1,\ldots,u_n)$ given by
\[
u_i=x_i+ \frac{1}{n-i+1} \sum_{j<i} x_j=x_i-\frac{1}{n-i+1}\sum_{j\geq i} x_j,
\]
and the random elements $(v_1,\ldots,v_n)$ given by
$$v_i=\frac{1}{\sqrt{n}}\sum_{k=1}^n \tilde{N}_{i,k} x_k,$$
where $(\tilde{N}_{i,k})_{1\leq i,k \leq n}$  is a family of independent standard Gaussian random variables which are independent from $(N_{i,k})_{1\leq i,k \leq n}$.
Let $(b_1,\ldots,b_n)$ be given for each $i= 1, \ldots n$ by 
$$b_i=a_i-\frac{1}{n-i}\sum_{i<j}a_j$$ 
in such a way that
$$\mathbf{x}=\sum_{i=1}^n x_i\otimes^* a_i=\sum_{i=1}^n u_i\otimes^* b_i.$$
Also we set $\mathbf{v}=\sum_{i=1}^n v_i\otimes^* b_i$.
The proof will be done in two major steps: for any $z \in \mathbb{C}^+$,  we first prove via the Lindeberg method in Theorem \ref{thm:Lindeberg}:
\begin{multline}\label{proof:exch_Lindeberg}
\big|\tau \otimes \varphi \big(\R_{\mathbf{x}}(z)\big) - \E \big[\tau \otimes \varphi \big(\R_{\mathbf{v}}(z)\big)\big]\big| 
\\ \leq C\left( \frac{K_2^2}{\Im(z)^3}\|x_1\|^2_{\infty}\sqrt{n} +\frac{K_\infty K_2^2}{\Im(z)^4}\|x_1\|_{\infty}^3n+ \frac{ K_\infty^2K_2^2}{\Im(z)^5}\|x_1\|^4_{\infty}n \right)\, ,
\end{multline}
and then via a Gaussian interpolation technique: 
\begin{equation}\label{proof:exch_interpolation}
\Big|\E \big[\tau \otimes \varphi \big(\R_{\bar{\mathbf{y}}}(z)\big)\big] -\E \big[\tau \otimes \varphi \big(\R_{\mathbf{v}}(z)\big)\big]\Big|
\leq \frac{3K_2^2}{\Im(z)^3}\|x_1\|^2_{\infty}\sqrt{n} \, .
\end{equation}
\paragraph*{\textbf{Application of the Lindeberg method}}
Considering  the $n$-tuples $(u_1,\ldots,u_n)$ and $(v_1,\ldots,v_n)$ in $\M$ and $(b_1,\ldots,b_n)$ in $\mathcal{N}$, we apply  Theorem \ref{thm:Lindeberg} and get, for any $z\in \mathbb{C}^+$,
$$
\big|\tau \otimes \varphi \big(\R_{\mathbf{x}}(z)\big) - \E \big [\tau \otimes \varphi \big(\R_{\mathbf{v}}(z)\big)\big]\big|  \leq \sum_{i=1}^n (|\E[P_i]|+|\E[Q_i]|+|\E[T_i]|),
$$
where in this case $\mathbf z_i $ and $\mathbf z_i^0$ are given by 
$$
\mathbf z_i = u_1\otimes^* b_1 +\cdots + u_i\otimes^* b_i+v_{i+1}\otimes^* b_{i+1}+ \cdots+ v_n\otimes^* b_n,
$$
$$ \mathbf z_i^0 = u_1\otimes^* b_1 +\cdots + u_{i-1}\otimes^* b_{i-1}+v_{i+1}\otimes^* b_{i+1}+ \cdots + v_n\otimes^* b_n,
$$
and the terms $P_i, Q_i$ and $T_i$ are  thus:
\[
P_i=\tau\otimes \varphi\big[\R_{\mathbf z_i^0}(z)(u_i\otimes^* b_i-v_i\otimes^* b_i)\R_{\mathbf z_i^0}(z) \big] ,
\]
$$
Q_i= \tau\otimes \varphi\big[ \R_{\mathbf z_i^0}(z)(u_i\otimes^* b_i)\R_{\mathbf z_i^0}(z)(u_i\otimes^* b_i)\R_{\mathbf z_i^0}(z) \big]  -\tau\otimes \varphi\big[\R_{\mathbf z_i^0}(z)(v_i\otimes^* b_i)\R_{\mathbf z_i^0}(z)(v_i\otimes^* b_i)\R_{\mathbf z_i^0}(z)\big] ,
$$
and
$$
T_i= \tau\otimes \varphi\Big[\R_{\mathbf z_i}(z) \big((u_i \otimes^* b_i)\R_{\mathbf z_i^0}(z) \big)^3
\Big] 
-\tau\otimes \varphi\Big[\R_{\mathbf z_{i-1}}(z) \big((v_i \otimes^* b_i)\R_{\mathbf z_i^0}(z) \big)^3
\Big] .
$$
Now to control the above quantities, it is convenient to  introduce first some notation:  for all $i,j,k\in \{1,\ldots, n\}$, let $u_i^{j\leftrightarrow k}$ and $v_i^{j\leftrightarrow k}$  denote the elements defined exactly as $u_i$ and $v_i$ but by swapping $x_j$ and $x_k$. We also set
$$\mathbf z_i^{0,j\leftrightarrow k} = u_1^{j\leftrightarrow k}\otimes^* b_1 +\cdots + u_{i-1}^{j\leftrightarrow k}\otimes^* b_{i-1}+v_{i+1}^{j\leftrightarrow k}\otimes^* b_{i+1}+ \cdots + v_n^{j\leftrightarrow k}\otimes^* b_n.$$
A key point of the proof is the following lemma which follows from the exchangeability of the $x_i$'s.
\begin{lemma}\label{lem:exch-keypoint}
Let $i\leq j$ and $i\leq k$ then for any polynomial $P$ in three noncommuting variables, we have
$$\E\big[\tau\otimes \varphi[P(\R_{\mathbf z_i^0}(z),x_j\otimes^* b_i,x_k\otimes^* b_i)]\big]=\E\big[\tau\otimes \varphi[P(\R_{\mathbf z_i^0}(z),x_k\otimes^* b_i,x_j\otimes^* b_i)]\big].$$
\end{lemma}

\begin{proof}[Proof of the Lemma \ref{lem:exch-keypoint}] Note that $(v_1,\ldots,v_n)$ has the same law as $(v_1^{j\leftrightarrow k},\ldots,v_n^{j\leftrightarrow k})$ as a random variable, and thus, if $i\leq j$ and $i\leq k$, then $\mathbf z_i^{0,j\leftrightarrow k}$ has the same law as $\mathbf z_i^{0}$ as a random variable. As a consequence, we just need to prove that
$$\tau\otimes \varphi[P(\R_{\mathbf z_i^{0}}(z),x_j\otimes^* b_i,x_k\otimes^* b_i)]=\tau\otimes \varphi[P(\R_{\mathbf z_i^{0,j\leftrightarrow k}}(z),x_k\otimes^* b_i,x_j\otimes^* b_i)].$$
This equality, valid for all $i,j,k\in \{1,\ldots, n\}$, is a direct consequence of exchangeability.
Indeed, for any polynomial $Q$, we have
$$\tau[Q(u_1,\ldots, u_{i-1},v_{i+1},\ldots, v_n,x_j,x_k)]=\tau[Q(u_1,\ldots, u_{i-1},v_{i+1}^{j\leftrightarrow k},\ldots,v_n^{j\leftrightarrow k},x_k,x_j)],$$
from which we deduce that,
for any polynomial $Q$, we have
\begin{multline*} \tau\otimes \varphi[Q(u_1\otimes^* b_1,\ldots, u_{i-1}\otimes^* b_{i-1},v_{i+1}\otimes^* b_{i+1},\ldots, v_n\otimes^* b_{n},x_j\otimes^* b_{i},x_k\otimes^* b_{i})] \\= \tau \otimes \varphi [Q(u_1\otimes^* b_{1},\ldots, u_{i-1}\otimes^* b_{i-1},v_{i+1}^{j\leftrightarrow k}\otimes^* b_{i+1},\ldots,v_n^{j\leftrightarrow k}\otimes^* b_{n},x_k\otimes^* b_{i},x_j\otimes^* b_{i})].
\end{multline*}
 Finally, by an approximation procedure, we get
\[\tau\otimes \varphi[P(\R_{\mathbf z_i^{0}}(z),x_j\otimes^* b_i,x_k\otimes^* b_i)]=\tau\otimes \varphi[P(\R_{\mathbf z_i^{0,j\leftrightarrow k}}(z),x_k\otimes^* b_i,x_j\otimes^* b_i)].\qedhere\]
\end{proof}
By Lemma \ref{lem:exch-keypoint}, we have for $j\geq i$,
$$\mathbb{E}\Big[\tau\otimes \varphi\big[\R_{\mathbf z_i^0}(z)(x_i\otimes^* b_i)\R_{\mathbf z_i^0}(z) \big]\Big]=\mathbb{E}\Big[\tau\otimes \varphi\big[\R_{\mathbf z_i^0}(z)(x_j\otimes^* b_i)\R_{\mathbf z_i^0}(z) \big]\Big],$$
which implies that
$$\mathbb{E}\Big[\tau\otimes \varphi\big[\R_{\mathbf z_i^0}(z)(u_i\otimes^* b_i)\R_{\mathbf z_i^0}(z) \big]\Big]= 0.$$ On the other hand, we note that $\mathbb{E}[v_i\otimes^* b_i]=0$ and $v_i\otimes^* b_i$ is independent from $\mathbf z_i^0$, which imply that
$$\mathbb{E}\Big[\tau\otimes \varphi\big[\R_{\mathbf z_i^0}(z)(v_i\otimes^* b_i)\R_{\mathbf z_i^0}(z) \big]\Big]= 0.$$
As a consequence,  $\mathbb{E}[P_i]=0$. Now turning to $Q_i$ and again by  using Lemma  \ref{lem:exch-keypoint}, we get  for any  $l \geq i$
\begin{align*}
&\E\Big[\tau\otimes \varphi \big(\R_{\mathbf z_i^0}(z)(x_i\otimes^* b_i)\R_{\mathbf z_i^0}(z)\Big(\frac{1}{n-i+1}\sum_{j\geq i}x_j\otimes^* b_i \Big)\R_{\mathbf z_i^0}(z)\big) \Big]\\
&=\E\Big[\tau\otimes \varphi \big(\R_{\mathbf z_i^0}(z)(x_l\otimes^* b_i)\R_{\mathbf z_i^0}(z)\Big(\frac{1}{n-i+1}\sum_{j\geq i}x_j\otimes^* b_i \Big)\R_{\mathbf z_i^0}(z)\big) \Big]\\
&= 
\E\Big[\tau\otimes \varphi \big(\R_{\mathbf z_i^0}(z)\Big(\frac{1}{n-i+1}\sum_{j\geq i}x_j\otimes^* b_i \Big)\R_{\mathbf z_i^0}(z)\Big(\frac{1}{n-i+1}\sum_{j\geq i}x_j\otimes^* b_i \Big)\R_{\mathbf z_i^0}(z)\big) \Big],
\end{align*}
and thus we get
\begin{align*}
&\E\Big[\tau\otimes \varphi \big(\R_{\mathbf z_i^0}(z)(u_i\otimes^* b_i)\R_{\mathbf z_i^0}(z)(u_i\otimes^* b_i)\R_{\mathbf z_i^0}(z)\big) \Big]\\
=&\E\Big[\tau\otimes \varphi \big(\R_{\mathbf z_i^0}(z)(x_i\otimes^* b_i)\R_{\mathbf z_i^0}(z)(x_i\otimes^* b_i)\R_{\mathbf z_i^0}(z)\big) \Big]\\
&-\E\Big[\tau\otimes \varphi \big(\R_{\mathbf z_i^0}(z)\Big(\frac{1}{n-i+1}\sum_{j\geq i}x_j\otimes^* b_i \Big)\R_{\mathbf z_i^0}(z)\Big(\frac{1}{n-i+1}\sum_{j\geq i}x_j\otimes^* b_i \Big)\R_{\mathbf z_i^0}(z)\big) \Big].
\end{align*}
On the other hand, we have
\begin{align*}
\E & \Big[\tau\otimes \varphi \big(\R_{\mathbf z_i^0}(z)(v_i\otimes^* b_i)\R_{\mathbf z_i^0}(z)(v_i\otimes^* b_i)\R_{\mathbf z_i^0}(z)\big) \Big]
\\=&
\frac{1}{n}\sum_{k,l} \E (\tilde{N}_{i,k} \tilde{N}_{i,l}) \E\Big[\tau\otimes \varphi \big(\R_{\mathbf z_i^0}(z)(x_k\otimes^* b_i)\R_{\mathbf z_i^0}(z)(x_l\otimes^* b_i)\R_{\mathbf z_i^0}(z)\big) \Big]
\\=&\E \Big[\frac{1}{n} \sum_k \tau\otimes \varphi \big(\R_{\mathbf z_i^0}(z)(x_k\otimes^* b_i)\R_{\mathbf z_i^0}(z)(x_k\otimes^* b_i)\R_{\mathbf z_i^0}(z)\big)
\\
=&\E \Big[ \frac{1}{n}\sum_{k<i}\tau\otimes \varphi \big(\R_{\mathbf z_i^0}(z)(x_k\otimes^* b_i)\R_{\mathbf z_i^0}(z)(x_k \otimes^* b_i)\R_{\mathbf z_i^0}(z)\big)\Big]
\\&+
\E \Big[ \frac{n-i+1}{n}\tau\otimes \varphi \big(\R_{\mathbf z_i^0}(z)(x_i\otimes^* b_i)\R_{\mathbf z_i^0}(z)(x_i\otimes^* b_i)\R_{\mathbf z_i^0}(z)\big)\Big].
\end{align*}
As a consequence,   $|\mathbb{E}[Q_i]|$  is bounded by the terms
\begin{align*}
& \bigg|\E \Big[\frac{i-1}{n}\tau\otimes \varphi \big(\R_{\mathbf z_i^0}(z)(x_i\otimes^* b_i)\R_{\mathbf z_i^0}(z)(x_i\otimes^* b_i)\R_{\mathbf z_i^0}(z)\big)\Big]
\\ & \qquad \qquad
 -\E \Big[ \frac{1}{n}\sum_{k<i}\tau\otimes \varphi \big(\R_{\mathbf z_i^0}(z)(x_k\otimes^* b_i)\R_{\mathbf z_i^0}(z)(x_k \otimes^* b_i)\R_{\mathbf z_i^0}(z)\big)\Big] \bigg|
\\&+
\Big|\E\Big[\tau\otimes \varphi \big(\R_{\mathbf z_i^0}(z)\Big(\frac{1}{n-i+1}\sum_{j\geq i}x_j\otimes^* b_i \Big)\R_{\mathbf z_i^0}(z)\Big(\frac{1}{n-i+1}\sum_{j\geq i}x_j\otimes^* b_i \Big)\R_{\mathbf z_i^0}(z)\big) \Big]\Big|
\\ & :=| \E [ Q_i^{(1)}] | + | \E [Q_i^{(2)}]|.
\end{align*}
As in the proof of Lemma \ref{lem:exch-keypoint}, one can prove for any $k<i$,
\begin{multline*}
\tau\otimes \varphi \big[\R_{\mathbf z_i^0}(z)(x_i\otimes^* b_i)\R_{\mathbf z_i^0}(z)(x_i\otimes^* b_i)\R_{\mathbf z_i^0}(z)\big]
\\ =
\tau\otimes \varphi \big[\R_{\mathbf z_i^{0,i\leftrightarrow k}}(z)(x_k\otimes^* b_i)\R_{\mathbf z_i^{0,i\leftrightarrow k}}(z)(x_k\otimes^* b_i)\R_{\mathbf z_i^{0,i \leftrightarrow k}}(z)\big].
\end{multline*}
Using the resolvent identity in Lemma \ref{lem:Taylor_resolvents} with $m=1$, we  compute for any $k<i$,
\begin{align*}
&\Big|\tau\otimes \varphi \big[\R_{\mathbf z_i^{0,i\leftrightarrow k}}(z)(x_k\otimes^* b_i)\R_{\mathbf z_i^{0,i\leftrightarrow k}}(z)(x_k\otimes^* b_i)\R_{\mathbf z_i^{0,i\leftrightarrow k}}(z)\big]
\\& \qquad \qquad\qquad -
\tau\otimes \varphi \big[\R_{\mathbf z_i^{0}}(z)(x_k\otimes^* b_i)\R_{\mathbf z_i^{0}}(z)(x_k\otimes^* b_i)\R_{\mathbf z_i^{0}}(z)\big]\Big|\\
\leq &\frac{3}{\Im(z)^2}\|\R_{\mathbf z_i^{0,i\leftrightarrow k}}(z)-\R_{\mathbf z_i^{0}}(z)\|_{\infty}\|x_k\otimes^* b_i\|_{L^2}^2\\
\leq & \frac{48}{\Im(z)^4}\|\mathbf z_i^{0,i\leftrightarrow k}-\mathbf z_i^{0}\|_{\infty}K_2^2\|x_1\|_{\infty}^2.
 \end{align*}
Note that, for all $j\in\{1,\ldots, n\}$, $\|u_j\|_\infty\leq 2\|x_1\|_\infty$ and $\|b_j\|_\infty\leq 2K_\infty$. As a consequence,
\begin{align*}
\|\mathbf z_i^{0,i\leftrightarrow k}-\mathbf z_i^{0}\|_{\infty}
&=\|u_k\otimes^* b_i+u_i\otimes^* b_k-u_k\otimes^* b_k-u_i\otimes^* b_i\|_{\infty}
\\& \leq 32\|x_1\|_{\infty}K_\infty,
\end{align*}
and thus
$$| \E [Q_i^{(1)}] | \leq C\frac{K_\infty K_2^2}{\Im(z)^4}\|x_1\|_{\infty}^3,$$
whenever $C\geq 1536$. On the other hand, turning to the second term $\E [ Q_i^{(2)}] $, we get
\begin{align*}
| \E [ Q_i^{(2)} ] |
\leq  \frac{1}{\Im(z)^3} \Big\|\frac{1}{n-i+1}\sum_{j\geq i}x_j\otimes^* b_i  \Big\|^2_{L^2}
\leq 16 \frac{K_2^2}{\Im(z)^3}\Big\|\frac{1}{n-i+1}\sum_{j\geq i}x_j \Big\|^2_{L^2} .
\end{align*}
Using exchangeability and the same computation as in  Chatterjee \cite[eq. (4)]{Ch-06}, we get
\[
\Big\|
\frac{1}{n-i+1}\sum_{j\geq i}x_j \Big\|^2_{L^2}
=\frac{i-1}{(n-i+1)(n-1)} \|x_1\|^2_{L^2}\leq \frac{1}{n-i+1}\|x_1\|^2_{L^2}.
\]
Collecting all the above bounds, we finally get
\[
|\mathbb{E}[Q_i]|\leq C\frac{K_2^2}{\Im(z)^3} \frac{1}{n-i+1}\|x_1\|^2_{L^2}+C\frac{K_\infty K_2^2}{\Im(z)^4}\|x_1\|_{\infty}^3\, ,
\]
where $C$ is a sufficiently large constant. We control now the third order term $T_i$. Considering its first term, we get the following bound: 
\begin{align*}
\Big|\E \Big[\tau\otimes \varphi\big[\R_{\mathbf z_i}(z) \big((u_i \otimes^* b_i)\R_{\mathbf z_i^0}(z) \big)^3
\big] \Big]\Big|
\leq &\frac{1}{\Im(z)^4} \|u_i \otimes^* b_i\|_{\infty}\|u_i \otimes^* b_i\|_{L^2}^2\\
\leq & 64 \frac{ K_\infty K_2^2}{\Im(z)^4} \|u_i\|_{\infty}\|u_i\|_{L^2}^2\\
\leq & 512  \frac{ K_\infty K_2^2}{\Im(z)^4} \|x_1\|_{\infty}^3.
\end{align*}
Recall the second term in $T_i$:
$\tau\otimes \varphi\Big[\R_{\mathbf z_{i-1}}(z) \big((v_i \otimes^* b_i)\R_{\mathbf z_i^0}(z) \big)^3\Big].$
Developing the last $v_i=\frac{1}{\sqrt{n}}\sum_k \tilde{N}_{i,k} x_k$ in it, we get $n$ terms of the form
\begin{align*}
\frac{1}{\sqrt{n}}  &\Big| \E \Big[  \tilde{N}_{i,k}\tau\otimes \varphi\big[\R_{\mathbf z_{i-1}}(z) \big((v_i \otimes^* b_i)\R_{\mathbf z_i^0}(z) \big)^2(x_k \otimes^* b_i)\R_{\mathbf z_i^0}(z) \big]\Big]\Big|\\
=&
\frac{1}{n} \Big| \E \Big[\tau\otimes \varphi\big[\R_{\mathbf z_{i-1}}(z)(v_i \otimes^* b_i)\R_{\mathbf z_i^0}(z) (x_k \otimes^* b_i)\R_{\mathbf z_i^0}(z) (x_k \otimes^* b_i)\R_{\mathbf z_i^0}(z) \big]\Big]\Big|\\
&+\frac{1}{n} \Big| \E[\tau\otimes \varphi\big[\R_{\mathbf z_{i-1}}(z) (x_k \otimes^* b_i)\R_{\mathbf z_i^0}(z) (v_i \otimes^* b_i)\R_{\mathbf z_i^0}(z)(x_k \otimes^* b_i)\R_{\mathbf z_i^0}(z) \big]\Big]\Big|\\ 
  & \quad + \frac{1}{n} \Big|\E\Big[\tau\otimes \varphi\big[\R_{\mathbf z_{i-1}}(z) (x_k \otimes^* b_i)\R_{\mathbf z_{i-1}}(z) (v_i \otimes^* b_i)\R_{\mathbf z_i^0}(z) (v_i \otimes^* b_i)\R_{\mathbf z_i^0}(z) (x_k \otimes^* b_i)\R_{\mathbf z_i^0}(z) \big]\Big]\Big|,
\end{align*}
where the above equality is obtained via an integration by parts. The third term is bounded by
$$
 \frac{1}{n} \frac{1}{\Im(z)^5} \|x_k\otimes^* b_i\|^2_{\infty} \E \|v_i\otimes^* b_i\|^2_{L^2} 
\leq  C\frac{1}{n} \frac{K_\infty^2K_2^2}{\Im(z)^5} \|x_1\|_{\infty}^2\E\|v_i\|_{L^2}^2\leq C\frac{1}{n} \frac{K_\infty^2K_2^2}{\Im(z)^5} \|x_1\|_{\infty}^4,
$$
since
\[
\E \|v_i\|^2_{L^2}=\frac{1}{n}\sum_{k,l} \E[ \tilde{N}_{i,k}\tilde{N}_{i,l} ] \tau(x_kx_l) 
=\frac{1}{n}\sum_{k}\|x_1\|^2_{L^2}
=\|x_1\|^2_{L^2}.
\]
Similarly, the first and second terms are bounded by $$C\frac{1}{n} \frac{K_\infty K_2^2}{\Im(z)^4} \|x_1\|_{\infty}^3.$$
Collecting the above bounds we finally get,
\[
|\mathbb{E}[T_i]|\leq C\frac{K_\infty K_2^2}{\Im(z)^4} \|x_1\|_{\infty}^3+C\frac{K_\infty^2K_2^2}{\Im(z)^5} \|x_1\|_{\infty}^4.
\]
Noting that $ \sum_{i=1}^n\frac{1}{n-i+1}\leq C\sqrt{n}$ whenever $C$ is sufficiently large, then gathering the bounds on $Q_i$ and $T_i$ and  summing over $i= 1 , \ldots , n$, we end the proof of \eqref{proof:exch_Lindeberg}.

\paragraph*{\textbf{Gaussian Interpolation}}

To prove \eqref{proof:exch_interpolation}, we set for each $0\leq t \leq 1$, 
$
\mathbf w_t=\sqrt{1-t}\mathbf v+\sqrt{t}\bar{\mathbf y}.
$
Then
\begin{align*}
\E \big[\tau\otimes \varphi[\R_{\bar{\mathbf y}}(z)]\big]-\E \big[\tau\otimes \varphi[\R_{\mathbf v}(z)]\big]
&= \E\int_0^1 \partial_t \tau\otimes \varphi[\R_{\mathbf w_t}(z)] \, \text{dt} \\
&=\E \int_0^1  \tau\otimes \varphi\left[\R_{\mathbf w_t}(z)^2
\left(\frac{\bar{\mathbf y}}{2\sqrt{t}} - \frac{\mathbf v}{2\sqrt{1-t}} \right)\right]\text{dt} .
\end{align*}
We can decompose by linearity
\[
\bar{\mathbf y}=\frac{1}{\sqrt{n}}\sum_{i,k=1}^n \Big(N_{i,k} -\frac{1}{n}\sum_{j=1}^n N_{j,k} \Big)x_k\otimes^* a_i
:=\frac{1}{\sqrt{n}}\sum_{i,k=1}^n W_{i,k}x_k\otimes^* a_i
\]
and
\[
\mathbf v=\frac{1}{\sqrt{n}}\sum_{i,k=1}^n \Big(\tilde{N}_{i,k} - \sum_{j=1}^{i-1}\frac{1}{n-j} \tilde{N}_{j,k} \Big)x_k\otimes^* a_i
:=\frac{1}{\sqrt{n}}\sum_{i,k=1}^n \tilde{W}_{i,k} x_k\otimes^* a_i\, .
\]
By integration by parts, we get
\begin{align*}
\E\big[ \tau\otimes \varphi \big(\R_{\mathbf w_t}(z)^2\bar{\uY}\big)\big]
&=\frac{1}{\sqrt{n}}\sum_{i,k=1}^n\E[ W_{i,k} \, \tau\otimes \varphi \big(  \R_{\mathbf w_t}(z)^2(x_k\otimes^* a_i)\big)\big] \\
&= 2\sqrt{t}\sum_{i,i'=1}^n\sigma_{i,i'}\frac{1}{n}\sum_{k=1}^n \E \big[ \tau\otimes \varphi \big(\R_{\mathbf w_t}(z)(x_k\otimes^* a_{i'})\R_{\mathbf w_t}(z)^2(x_k\otimes^* a_i)\big)\big],
\end{align*}
where for any $k \in \{1, \ldots , n \}$,
\[
\sigma_{i,i'}:= \E [W_{i,k} W_{i',k}]
=\left\{ \begin{array}
[c]{ll}%
-n^{-1} & \mbox{ if }i\neq i',\\
(n-1)/{n} & \mbox{ if }i = i' \, .
\end{array} 
\right.  \]
Similarly, we get
\begin{align*}
\E \big[ \tau\otimes \varphi \big(\R_{\mathbf w_t}(z)^2\mathbf v\big)\big]&= \frac{1}{\sqrt{n}} \sum_{i,k=1}^n\E \big[ \tilde{W}_{i,k}\,  \tau\otimes \varphi \big( \R_{\mathbf w_t}(z)^2 (x_k \otimes^* a_i )\big)] \\
&=2\sqrt{1-t}\sum_{i,i'=1}^n\tilde{\sigma}_{i,i'}\frac{1}{n}\sum_{k=1}^n
\E \big[ \tau\otimes \varphi \big(\R_{\mathbf w_t}(z)(x_k\otimes^* a_{i'})\R_{\mathbf w_t}(z)^2(x_k\otimes^* a_i)\big)\big],
\end{align*}
where for any $k \in \{1, \ldots , n \}$,
\[
\tilde{\sigma}_{i,i'} := \E [\tilde{W}_{i,k} \tilde{W}_{i',k}]
=\left\{ \begin{array}
[c]{ll}%
-\dfrac{1}{n-1}-\displaystyle\sum_{j=1}^{i-1}\dfrac{1}{(n-j)^2(n-j-1)} & \mbox{ if }i< i',\\
1+\displaystyle\sum_{j=1}^{i-1}\dfrac{1}{(n-j)^{2}} & \mbox{ if }i = i' ,\\
\tilde{\sigma}_{i',i}  & \mbox{ if } i>i'\, .
\end{array} 
\right. 
 \]
Combining the above quantities, we get
\begin{align*}
&\Big|\E \big[ \tau \otimes \varphi \big(\R_{\bar{\mathbf{y}}}(z)\big)\big] -\E \big[ \tau \otimes \varphi \big(\R_{\mathbf{v}}(z)\big)\big]\Big|\\
=& \sum_{i,i'=1}^n |\sigma_{i,i'}-\tilde{\sigma}_{i,i'}|
 \frac{1}{n}\sum_{k=1}^n \int_0^1\E \big| \tau \otimes \varphi \big(\R_{\mathbf w_t}(z)(x_k\otimes^* a_{i'})\R_{\mathbf w_t}(z)^2(x_k\otimes^* a_{i})\big)\big| \text{dt} \\
\leq& 4 \sum_{i,i'=1}^n |\sigma_{i,i'}-\tilde{\sigma}_{i,i'}|  \frac{K_2^2}{\Im(z)^3} \|x_1\|^2_{\infty}\\
\leq&12  \frac{K_2^2}{\Im(z)^3}\|x_1\|^2_{\infty}\sqrt{n},
\end{align*}
where the last inequality follows by the same computation as in  Chatterjee \cite[eq. (12)]{Ch-06} to show that
\[
 \sum_{i,i'=1}^n|\sigma_{i,i'}-\tilde{\sigma}_{i,i'}|
 \leq 3+\sum_{k=2}^{n-1}\frac{1}{k}
 \leq 3\sqrt{n}.
\qedhere \]
\end{proof}


\begin{thebibliography}{99}

\bibitem{An-Er-Kr}
O.~H. Ajanki, L.~Erd\"{o}s, and T.~Kr\"{u}ger.
\newblock Stability of the matrix {D}yson equation and random matrices with
  correlations.
\newblock {\em Probab. Theory Related Fields}, 173(1-2):293--373, 2019.

\bibitem{Alt-al-Kronecker-19}
J.~Alt, L.~Erd\"{o}s, T.~Kr\"{u}ger, and Y.~Nemish.
\newblock Location of the spectrum of {K}ronecker random matrices.
\newblock {\em Ann. Inst. Henri Poincar\'{e} Probab. Stat.}, 55(2):661--696,
  2019.

\bibitem{Ba-14}
M.~Banna.
\newblock Limiting spectral distribution of gram matrices associated with
  functionals of $\beta $-mixing processes.
\newblock {\em Journal of Mathematical Analysis and Applications}, 433(1):416
  -- 433, 2016.

\bibitem{BannaMai2021}
M.~Banna and T.~Mai.
\newblock Berry-Esseen bounds for the multivariate $\mathcal{B}$-free CLT
and operator-valued matrices.
\newblock {\em arXiv preprint arXiv:2105.02044}, 2021.

\bibitem{Banna-Mai-20}
M.~Banna and T.~Mai.
\newblock H\"{o}lder continuity of cumulative distribution functions for
  noncommutative polynomials under finite free {F}isher information.
\newblock {\em J. Funct. Anal.}, 279(8):108710, 45, 2020.


\bibitem{Ba-Me-13}
M.~Banna and F.~Merlev\`ede.
\newblock Limiting spectral distribution of large sample covariance matrices
  associated with a class of stationary processes.
\newblock {\em Journal of Theoretical Probability}, pages 1--39, 2013.

\bibitem{Ba-Me-Pe-13}
M.~Banna, F.~Merlev\`ede, and M.~Peligrad.
\newblock On the limiting spectral distribution for a large class of symmetric
  random matrices with correlated entries.
\newblock {\em Stochastic Processes and their Applications}, 125(7):2700 --
  2726, 2015.

\bibitem{Belinschi-Mai-Speicher}
S.~T. Belinschi, T.~Mai, and R.~Speicher.
\newblock Analytic subordination theory of operator-valued free additive
  convolution and the solution of a general random matrix problem.
\newblock {\em Journal f{\"u}r die reine und angewandte Mathematik (Crelles
  Journal)}.

\bibitem{Bryc2008}
W.~Bryc.
\newblock Compound real wishart and q-wishart matrices.
\newblock {\em International Mathematics Research Notices}, 2008:rnn079, 2008.

\bibitem{Ch-06}
S.~Chatterjee.
\newblock A generalization of the {L}indeberg principle.
\newblock {\em Ann. Probab.}, 34(6):2061--2076, 2006.

\bibitem{Curran-Speicher}
S.~Curran and R.~Speicher.
\newblock Quantum invariant families of matrices in free probability.
\newblock {\em Journal of Functional Analysis}, 261(4):897 -- 933, 2011.

\bibitem{Deya-Nourdin}
A.~Deya and I.~Nourdin.
\newblock Invariance principles for homogeneous sums of free random variables.
\newblock {\em Bernoulli}, 20(2):586--603, 2014.

\bibitem{Diaz}
M.~Diaz.
\newblock On random operator-valued matrices: Operator-valued semicircular
  mixtures and central limit theorem.
\newblock {\em arXiv preprint arXiv:1410.3500}, 2014.

\bibitem{Erdos-al-19}
L.~Erd\"{o}s, T.~Kr\"{u}ger, and D.~Schr{\"o}der.
\newblock Random matrices with slow correlationdecay.
\newblock {\em Forum Math. Sigma}, 7:e8, 2019.

\bibitem{Girko-book}
V.~L. Girko.
\newblock {\em An introduction to statistical analysis of random arrays}.
\newblock VSP, Utrecht, 1998.
\newblock Translated from the Russian, Revised by the author.

\bibitem{Girko2000}
V.~L. Girko.
\newblock Random block matrix density and {SS}-law.
\newblock {\em Random Operators and Stochastic Equations}, 8(2):189--194, 2000.

\bibitem{Haagerup-Thorbjorsen}
U.~Haagerup and S.~Thorbj{\o}rnsen.
\newblock A new application of random matrices: {${\rm Ext}(C^*_{\rm
  red}(F_2))$} is not a group.
\newblock {\em Ann. of Math. (2)}, 162(2):711--775, 2005.

\bibitem{Helton-RFar-Speicher}
J.~W. Helton, R.~Rashidi~Far, and R.~Speicher.
\newblock Operator-valued semicircular elements: solving a quadratic matrix
  equation with positivity constraints.
\newblock {\em Int. Math. Res. Not. IMRN}, (22):Art. ID rnm086, 15, 2007.

\bibitem{Kargin}
V.~Kargin.
\newblock A proof of a non-commutative central limit theorem by the {L}indeberg
  method.
\newblock {\em Electron. Comm. Probab.}, 12:36--50, 2007.

\bibitem{Kostler}
C.~{K\"ostler}.
\newblock {A noncommutative extended de Finetti theorem.}
\newblock {\em {J. Funct. Anal.}}, 258(4):1073--1120, 2010.

\bibitem{Kostler-Speicher-07}
C.~{K\"ostler} and R.~{Speicher}.
\newblock {On the structure of non-commutative white noises.}
\newblock {\em {Trans. Am. Math. Soc.}}, 359(9):4325--4338, 2007.

\bibitem{Kostler-Speicher-09}
C.~{K\"ostler} and R.~{Speicher}.
\newblock {A noncommutative de Finetti theorem: invariance under quantum
  permutations is equivalent to freeness with amalgamation.}
\newblock {\em {Commun. Math. Phys.}}, 291(2):473--490, 2009.

\bibitem{Lindeberg}
J.~W. Lindeberg.
\newblock Eine neue herleitung des exponentialgesetzes in der
  wahrscheinlichkeitsrechnung.
\newblock {\em Mathematische Zeitschrift}, 15(1):211--225, 1922.

\bibitem{Liu2018}
W.~Liu.
\newblock Operator valued random matrices with free and asymptotic freeness.
\newblock {\em arXiv preprint arXiv:1806.04848}, 2018.

\bibitem{Mai-Speicher-13}
T.~Mai and R.~Speicher.
\newblock {\em Operator-Valued and Multivariate Free Berry-Esseen Theorems},
  pages 113--140.
\newblock Springer Berlin Heidelberg, Berlin, Heidelberg, 2013.

\bibitem{Mingo-Speicher}
J.~Mingo and R.~Speicher.
\newblock {\em Free probability and Random matrices}.
\newblock 2010.

\bibitem{Ni-Di-Sp-operator-valued}
A.~Nica, D.~Shlyakhtenko, and R.~Speicher.
\newblock Operator-valued distributions. i. characterizations of freeness.
\newblock {\em International Mathematics Research Notices},
  2002(29):1509--1538, 2002.

\bibitem{Ni-Sh-Sp-02}
A.~Nica, D.~Shlyakhtenko, and R.~Speicher.
\newblock {$R$}-cyclic families of matrices in free probability.
\newblock {\em J. Funct. Anal.}, 188(1):227--271, 2002.

\bibitem{Ni-Sp-Rdiagonal}
A.~Nica and R.~Speicher.
\newblock {$R$}-diagonal pairs---a common approach to {H}aar unitaries and
  circular elements.
\newblock In {\em Free probability theory ({W}aterloo, {ON}, 1995)}, volume~12
  of {\em Fields Inst. Commun.}, pages 149--188. Amer. Math. Soc., Providence,
  RI, 1997.

\bibitem{Nica-Speicher}
A.~Nica and R.~Speicher.
\newblock {\em {Lectures on the combinatorics of free probability.}}
\newblock Cambridge: Cambridge University Press, 2006.

\bibitem{Oraby2007}
T.~Oraby.
\newblock The spectral laws of {H}ermitian block-matrices with large random
  blocks.
\newblock {\em Electron. Comm. Probab.}, 12:465--476, 2007.

\bibitem{Pisier}
G.~Pisier.
\newblock Probabilistic methods in the geometry of banach spaces.
\newblock {\em Probability and Analysis, Varenna (Italy) 1985, Lecture Notes in
  Math.}, 1206:167--241, 1986.

\bibitem{Pisier-Xu}
G.~Pisier and Q.~Xu.
\newblock Non-commutative {$L^p$}-spaces.
\newblock In {\em Handbook of the geometry of {B}anach spaces, {V}ol. 2}, pages
  1459--1517. North-Holland, Amsterdam, 2003.

\bibitem{Popa-Hao-17b}
M.~Popa and Z.~Hao.
\newblock An asymptotic property of large matrices with identically distributed
  boolean independent entries.
\newblock {\em arXiv preprint arXiv:1712.04031}, 2017.

\bibitem{Popa-Hao-17a}
M.~Popa and Z.~Hao.
\newblock A combinatorial result on asymptotic independence relations for
  random matrices with non-commutative entries.
\newblock {\em arXiv preprint arXiv:1704.06324}, 2017.

\bibitem{Po-Sk-Su}
D.~{Potapov}, A.~{Skripka}, and F.~{Sukochev}.
\newblock {Higher-order spectral shift for contractions.}
\newblock {\em {Proc. Lond. Math. Soc. (3)}}, 108(2):327--349, 2014.

\bibitem{Ra-Or-Br-Sp-08}
R.~Rashidi~Far, T.~Oraby, W.~Bryc, and R.~Speicher.
\newblock On slow-fading {MIMO} systems with nonseparable correlation.
\newblock {\em IEEE Trans. Inform. Theory}, 54(2):544--553, 2008.

\bibitem{Ryan-98}
{\O}.~{Ryan}.
\newblock {On the limit distributions of random matrices with independent or
  free entries.}
\newblock {\em {Commun. Math. Phys.}}, 193(3):595--626, 1998.

\bibitem{Shlyakhtenko97}
D.~Shlyakhtenko.
\newblock {\em Limit Distribution of Matrices with Bosonic and Fermionic
  Entries}, pages 241--25.
\newblock Fields Institute Communications, vol. 12, AMS, 1997.

\bibitem{Speicher-98}
R.~Speicher.
\newblock Combinatorial theory of the free product with amalgamation and
  operator-valued free probability theory.
\newblock {\em Mem. Amer. Math. Soc.}, 132(627):x+88, 1998.


\bibitem{Sp-Va-Mai}
R.~Speicher and C.~Vargas.
\newblock Free deterministic equivalents, rectangular random matrix models, and
  operator-valued free probability theory.
\newblock {\em Random Matrices Theory Appl.}, 1(2):1150008, 26, 2012.
\newblock With an appendix by Tobias Mai.

\bibitem{Tao-Vu-11}
T.~Tao and V.~Vu.
\newblock Random matrices: universality of local eigenvalue statistics.
\newblock {\em Acta Math.}, 206(1):127--204, 2011.

\bibitem{Tao-Vu-14}
T.~Tao and V.~Vu.
\newblock Random matrices: the universality phenomenon for {W}igner ensembles.
\newblock 72:121--172, 2014.

\bibitem{Thorbjornsen00}
S.~Thorbj{\o}rnsen.
\newblock Mixed moments of {V}oiculescu's {G}aussian random matrices.
\newblock {\em J. Funct. Anal.}, 176(2):213--246, 2000.

\bibitem{Tomiyama}
J.~Tomiyama.
\newblock On the transpose map of matrix algebras.
\newblock {\em Proceedings of the American Mathematical Society},
  88(4):635--638, 1983.

\bibitem{Voiculescu90}
D.~Voiculescu.
\newblock Circular and semicircular systems and free product factors.
\newblock In {\em Operator algebras, unitary representations, enveloping
  algebras, and invariant theory ({P}aris, 1989)}, volume~92 of {\em Progr.
  Math.}, pages 45--60. Birkh\"{a}user Boston, Boston, MA, 1990.

\bibitem{book-Voi-Sta-Web}
D.-V. Voiculescu, N.~Stammeier, and M.~Weber, editors.
\newblock {\em Free probability and operator algebras}.
\newblock M\"{u}nster Lectures in Mathematics. European Mathematical Society
  (EMS), Z\"{u}rich, 2016.
\newblock Lecture notes from the masterclass held in M\"{u}nster, September
  2--6, 2013.

\end{thebibliography}
\end{document}